\documentclass[preprint,11pt]{elsarticle}

\usepackage[T1]{fontenc}
\usepackage{lmodern}
\usepackage{amsmath,amssymb,amsthm,mathtools,mathrsfs}
\usepackage{geometry}
\usepackage{xcolor}
\usepackage{caption}
\usepackage{tocloft}   
\usepackage{hyperref}  
\makeatletter

\renewcommand{\tmptocnumberline}[1]{%
  \setbox0=\hbox{\appendixname}%
  \appnamewidth=\wd0
  \addtolength{\appnamewidth}{2.5pc}%
  \hb@xt@\appnamewidth{#1.\hfill}}
\makeatother

\hypersetup{colorlinks=true,linkcolor=blue!55!black,urlcolor=blue!55!black}
\allowdisplaybreaks[4]

\newtheorem{theorem}{Theorem}[section]
\newtheorem{lemma}[theorem]{Lemma}
\newtheorem{proposition}[theorem]{Proposition}
\newtheorem{coro}[theorem]{Corollary}
\theoremstyle{remark}

\theoremstyle{ex}

\DeclareMathOperator{\Pf}{Pf}

\newcommand{\ps}{\operatorname{ps}}
\newcommand{\la}{\lambda}
\newcommand{\Ya}{S_{(k,k)}}
\newcommand{\Yb}{S_{(k+1,k)}}
\newcommand{\Yc}{S_{(k+2,k)}}
\newcommand{\Za}{S_{(k,k,1)}}
\newcommand{\Zb}{S_{(k,k,2)}}
\newcommand{\Zc}{S_{(k,k,1,1)}}
\newcommand{\Zd}{S_{(k+1,k,1)}}
\newcommand{\Ze}{S_{(k+1,k,1,1)}}
\newcommand{\Zf}{S_{(k+3,k)}}
\newcommand{\Zg}{S_{(k,k,2,2)}}
\newcommand{\Zh}{S_{(k,k,2,1,1)}}

\newdimen\Squaresize \Squaresize=11pt
\newdimen\Thickness \Thickness=0.7pt
\def\Square#1{\hbox{\vrule width \Thickness
		\vbox to \Squaresize{\hrule height \Thickness\vss
			\hbox to \Squaresize{\hss#1\hss}
			\vss\hrule height\Thickness}
		\unskip\vrule width \Thickness} \kern-\Thickness}

\def\Vsquare#1{\vbox{\Square{$#1$}}\kern-\Thickness}
\def\young#1{\vbox{\smallskip\offinterlineskip \halign{&\Vsquare{##}\cr #1}}}

\begin{document}
\begin{frontmatter}

\title{Pfaffian--Toeplitz identities, Schur positivity, and the
$q$-log-convexity of Baxter polynomials}

\author[]{Yanxin Liu}
\ead{liu-yanxin@outlook.com}

\author[]{Jianxi Mao\corref{cor1}}
\ead{maojx@dlut.edu.cn}
\cortext[cor1]{Corresponding author}

\address{School of Mathematical Sciences, Dalian University of Technology, Dalian 116024,\\ P. R. China}

\begin{abstract}
We use the Pfaffian minor summation formula together with
Jacobi--Trudi Toeplitz matrices to derive Pfaffian
expansions in skew Schur functions.
This yields explicit Schur expansions for several generating
functions involving products of skew Schur functions. 
In particular, using sparse skew-symmetric matrices, we provide a Pfaffian proof of a Schur-positive identity arising in the
study of the $q$-log-convexity of the Narayana polynomials. 
As the main application, we prove that the Baxter polynomials
form a $q$-log-convex sequence. 
We further show that the Baxter transformation defined by the refined Baxter numbers preserves
log-convexity. 
Finally, by realizing the $q$-refined Baxter numbers as
principal specializations of rectangular Schur functions, we prove
that the array of $q$-refined Baxter numbers is $q$-log-concave both along each row and along each
column. 
Both $q$-log-concavity results extend naturally to the $q$-analogues of the $d$-Hoggatt numbers.
\end{abstract}

\begin{keyword}
Pfaffians \sep Schur functions \sep Baxter numbers \sep Log-convexity\sep $q$-Log-convexity
\MSC[2020] 05E05\sep 15A15 \sep 05A15
\end{keyword}
\end{frontmatter}

\tableofcontents
\noindent\rule{\textwidth}{0.4pt} 
\section{Introduction}
The aim of this article is twofold.
The first is to combine the Pfaffian minor summation formula with skew Jacobi--Trudi Toeplitz matrices to obtain skew Schur expansions of certain symmetric functions. 
The second is to use these Schur-positive identities to prove the $q$-log-convexity of the Baxter polynomials.

A matrix $M=[M_{ij}]_{1\le i,j\le 2n}$ is called \emph{skew-symmetric} if $M_{ij}=-M_{ji}$ for all $i,j$.
The \emph{Pfaffian} of a skew-symmetric matrix $M$, denoted by $\Pf(M)$, is explicitly defined by the formula
\begin{equation*}
	\Pf(M)=\frac{1}{2^n n!}\sum_{\sigma \in \mathfrak{S}_{2n}}\mathrm{sgn}(\sigma)\prod_{i=1}^{n}M_{\sigma(2i-1),\sigma(2i)},
\end{equation*}
where $\mathfrak{S}_{2n}$ is the symmetric group of degree $2n$ and $\mathrm{sgn}(\sigma)$ is the signature of $\sigma$.

Pfaffian methods have numerous applications in the theory of symmetric functions and related areas~\cite{FK01,HIMN17,IN13,IW00,JL15,Oka19,Ste90}.
In particular, Ishikawa and Wakayama~\cite{IW95} established
the Pfaffian minor summation formula.
Let $[m]:=\{1,2,\ldots,m\}$ and let $M_{I,J}$ be the submatrix of $M$ obtained by selecting the rows and columns indexed by $I$ and $J$.
\begin{lemma}{\rm (\cite[Theorem 1]{IW95})}\label{pfcomp}
	Let $m \le n$ and $m$ be even. 
	Suppose that $H$ is an $m\times n$ matrix, and  $\Omega$ is an $n\times n$ 
skew-symmetric matrix. 
	Then $H\Omega H^{\mathsf T}$ is a skew-symmetric matrix and
	\begin{equation}\label{pffor}
		\Pf\left(H\Omega H^{\mathsf T}\right)=
		\sum_{\substack{J \subset [n] \\ |J|=m}}\det H_{[m],J} \Pf\left(\Omega_{J,J}\right).
	\end{equation}
\end{lemma}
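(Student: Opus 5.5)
We give a self-contained proof of the minor summation formula \eqref{pffor}, working over a commutative ring and treating the entries of $H$ and $\Omega$ as indeterminates where convenient. Skew-symmetry of $H\Omega H^{\mathsf T}$ is immediate, since $(H\Omega H^{\mathsf T})^{\mathsf T}=H\Omega^{\mathsf T}H^{\mathsf T}=-H\Omega H^{\mathsf T}$. Its diagonal also vanishes, because $(H\Omega H^{\mathsf T})_{ii}=\sum_{k,l}H_{ik}\Omega_{kl}H_{il}$ and the terms indexed by $(k,l)$ and $(l,k)$ cancel. The identity itself will come from the exterior-algebra description of the Pfaffian.

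Let $V$ be free with basis $e_1,\dots,e_n$. To a skew-symmetric $n\times n$ matrix $A$ attach the $2$-form $\omega_A=\sum_{k<l}A_{kl}\,e_k\wedge e_l$. The basic fact we use is
\[
\frac{\omega_A^{\wedge r}}{r!}=\sum_{|J|=2r}\Pf(A_{J,J})\,e_J ,
\qquad e_J=e_{j_1}\wedge\cdots\wedge e_{j_{2r}}\quad (j_1<\cdots<j_{2r}).
\]
Both sides are polynomial in the entries of $A$ with integer coefficients, and $\omega_A^{\wedge r}$ is the sum over ordered $r$-tuples of pairs, so the division by $r!$ is legitimate. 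Expanding, each perfect matching of $J$ appears $r!$ times with the sign of the corresponding permutation; this is exactly the matching-expansion definition of $\Pf$, and it agrees with the $\frac{1}{2^n n!}$ formula given above.

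Now let $f_1,\dots,f_m$ be a basis of $W$ and let $\phi\colon W^*\to V^*$, or equivalently work with transposes, be the linear map determined by $H$. More concretely, define $\psi\colon\bigwedge V\to\bigwedge W$ as the algebra map induced by $e_k\mapsto\sum_i H_{ik}f_i$. A direct computation gives
\[
\psi(\omega_\Omega)=\sum_{k<l}\Omega_{kl}\Bigl(\sum_i H_{ik}f_i\Bigr)\wedge\Bigl(\sum_j H_{jl}f_j\Bigr)=\sum_{i<j}(H\Omega H^{\mathsf T})_{ij}\,f_i\wedge f_j=\omega_{H\Omega H^{\mathsf T}} .
\]
Here we use skew-symmetry of $\Omega$ to symmetrize the sum over $k<l$ into $\frac12\sum_{k,l}$. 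Since $\psi$ is an algebra homomorphism, taking $r=m/2$ gives $\psi\bigl(\omega_\Omega^{\wedge r}/r!\bigr)=\omega_{H\Omega H^{\mathsf T}}^{\wedge r}/r!$. The right side equals $\Pf(H\Omega H^{\mathsf T})\,f_1\wedge\cdots\wedge f_m$. The left side equals $\sum_{|J|=m}\Pf(\Omega_{J,J})\,\psi(e_J)$, and $\psi(e_J)=\det H_{[m],J}\,f_1\wedge\cdots\wedge f_m$ by the standard expansion of a wedge of $m$ vectors in $m$-dimensional space. Comparing coefficients yields \eqref{pffor}. For $|J|=m$ the hypothesis $m\le n$ is only needed so that the sum is nonempty; if $m>n$, both sides vanish anyway.

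The only delicate step is the sign and normalization bookkeeping in the fact $\omega_A^{\wedge r}/r!=\sum\Pf(A_{J,J})e_J$. One must check carefully that the ordering conventions, namely $J$ listed increasingly and pairs with $k<l$, produce exactly $\mathrm{sgn}(\sigma)$. The plan is to verify the case $r=1$ directly, where it gives $A_{j_1j_2}$, and then reduce the general case to the matching expansion of $\Pf$. Everything else is formal multilinearity.
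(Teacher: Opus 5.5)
Your proof is correct. Note that the paper never proves this statement: it quotes it from Ishikawa--Wakayama \cite{IW95} and uses it as a black box, always on finite column truncations $H_{[m],[N]}$, $\Omega_N$ before letting $N\to\infty$. So your exterior-algebra argument is a genuinely different, self-contained route rather than a variant of anything in the paper.

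Your key steps all check out:
\begin{itemize}
\item You identify $\Pf(A_{J,J})$ as the coefficient of $e_J$ in the divided power $\omega_A^{\wedge r}/r!$.
\item The algebra map $\psi$ induced by $e_k\mapsto\sum_i H_{ik}f_i$ sends $\omega_\Omega$ to $\omega_{H\Omega H^{\mathsf T}}$.
\item In top degree, $\psi(e_J)=\det H_{[m],J}\,f_1\wedge\cdots\wedge f_m$.
\end{itemize}
The sign and normalization bookkeeping you flag as delicate is also fine. Each set of $r$ disjoint pairs occurs exactly $r!$ times in $\omega_A^{\wedge r}$ with identical contribution, since $2$-forms commute. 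The resulting sign is that of the matching expansion, which agrees with the paper's $\frac{1}{2^n n!}\sum_{\sigma}$ definition.

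Two small tidy-ups. First, the symmetrization $\sum_{k<l}=\frac12\sum_{k,l}$ needs $2$ to be invertible. You can sidestep it by grouping the $(k,l)$ and $(l,k)$ terms directly, using $\Omega_{kk}=0$, to get $(H\Omega H^{\mathsf T})_{ij}=\sum_{k<l}\Omega_{kl}(H_{ik}H_{jl}-H_{il}H_{jk})$, which is exactly the coefficient of $f_i\wedge f_j$ in $\psi(\omega_\Omega)$. Second, the aside about $\phi\colon W^*\to V^*$ is unnecessary once $\psi$ is defined concretely.

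Your route buys self-containedness. It also gives validity over any commutative ring, by specialization from the generic integer case, and shows for free that both sides vanish when $m>n$. The paper's citation buys only brevity, which is reasonable for such a classical formula.
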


This Pfaffian minor summation formula and its specializations have been
used to derive a variety of identities for Schur functions and related symmetric functions~\cite{IW99,IW00,IW06}.
For example,
Sundquist~\cite{Sun96} proved two-variable generalizations of this formula and applied them to express Schur functions in terms of Pfaffians.
Bump and Diaconis~\cite{BD02} further explored the close connection between symmetric functions and Toeplitz matrices.
Okada~\cite{Oka19} gave Pfaffian analogues of the Cauchy--Binet formulas that provide direct proofs of many basic identities for Schur $Q$-functions. 
Huh et al.~\cite{HKKO25} used the Pfaffian minor summation formula to establish affine bounded Littlewood identities for cylindric Schur functions.

Ishikawa, Okada and Wakayama~\cite{IW96} developed a systematic application of the
Pfaffian minor summation formula to identities involving Schur polynomials.
By specializing the matrix $H$ in~\eqref{pffor} to a Vandermonde-type matrix, 
they expressed weighted Schur series as Pfaffians.

In this paper, we develop a Pfaffian method for Schur positivity by combining the
Pfaffian minor summation formula with Jacobi--Trudi Toeplitz matrices.
The Toeplitz formulation allows us to choose sparse
skew-symmetric matrices whose Pfaffians can be evaluated directly.
The minor summation expansion gives an explicitly
Schur-positive series.
Equating the two expressions produces nontrivial Schur-positive
identities. 
As a first illustration, we obtain a Pfaffian proof of the
identity used by Chen, Wang and Yang~\cite{CWY10} in their proof of the
$q$-log-convexity of the Narayana polynomials. 
Further specializations give two additional identities,
and we use them to prove the $q$-log-convexity of the Baxter polynomials.

Let $(a_n)_{n\ge 0}$ be a sequence of nonnegative numbers.  
The sequence
is called \emph{log-concave} (resp. \emph{log-convex}) if 
$
a_n^2\ge a_{n-1}a_{n+1}
$
(resp. $a_n^2\le a_{n-1}a_{n+1}$) for all $n\ge 1$.
Log-concave and log-convex sequences often arise in combinatorics, algebra and geometry; 
see, for example, ~\cite{Bra15,DV08,LW07,Sta89}.
For a polynomial $f(q)\in\mathbb{R}[q]$, write $f(q)\ge_q 0$ if it has only nonnegative coefficients. 
A polynomial sequence
$(f_n(q))_{n\ge 0}$ is called \emph{$q$-log-concave} if
\begin{equation*}
	f_n(q)^2-f_{n-1}(q)f_{n+1}(q) \ge_q 0, \quad n\ge 1,
\end{equation*}
and it is called \emph{$q$-log-convex} if
\begin{equation*}
	f_{n-1}(q)f_{n+1}(q)-f_n(q)^2 \ge_q 0, \quad n\ge 1.
\end{equation*}
The notion of $q$-log-concavity goes back to Stanley.
Sagan~\cite{Sag92_1,Sag92_2} established the $q$-log-concavity of sequences of symmetric functions
and investigated the $q$-log-concavity of the $q$-binomial coefficients.
Liu and Wang~\cite{LW07} introduced the notion of $q$-log-convexity,
and proved that some well-known polynomials such as Bell polynomials and Eulerian polynomials are $q$-log-convex.
The notion of $q$-log-convexity has been extensively studied~\cite{CWY10,CWY11,Zhu14,Zhu21,ZS15}.
In particular, using the theory of Schur positivity,
Chen, Wang and Yang~\cite{CWY10} proved the (strong) $q$-log-convexity of Narayana polynomials.

The principal application in this paper concerns the
Baxter numbers.
Baxter numbers enumerate Baxter permutations, which originated in Baxter's study~\cite{Bax64} of fixed
points for the composite of commuting functions. 
Chung et al.~\cite{CGHK78} presented a sum formula for the Baxter numbers $B_n$, where 
\begin{equation*}
	B_n=\sum_{k=0}^nB(n,k)=\sum_{k=0}^n\frac{\binom{n}{k}\binom{n+1}{k}\binom{n+2}{k}}
	{\binom{k+1}{k}\binom{k+2}{k}}, \quad 0\le k\le n.
\end{equation*}
We call $B(n,k)$ the {\it refined Baxter number}.

The Baxter numbers and Baxter permutations are of great significance in combinatorics, algebra and analysis~\cite{Dil15,LR12,Mal79}. 
Various bijections involving Baxter permutations have been constructed~\cite{Dil12,LL23}.
Fang, Zhang and Zhao~\cite{FZZ26} provided an analytic study of Baxter numbers.
Dilks~\cite{Dil15} developed a comprehensive and unified study of Baxter objects, including 
the involutions, the $(-1)$-phenomenon and gamma-nonnegativity.
It is known~\cite{Cig21} that the binomial coefficients, the Narayana numbers and the refined Baxter numbers can be viewed as specializations of the $d$-Hoggatt numbers with $d=1,2,3$, respectively.
The second author and Shi~\cite{MS26} studied the log-behavior of the $d$-Hoggatt numbers.

The paper is organized as follows.   
In Section 2, we review the necessary background on symmetric functions.
In Section 3, we combine the Pfaffian minor summation formula with skew Jacobi--Trudi Toeplitz matrices to derive several Schur function identities. 
Section 4 introduces the $q$-refined Baxter numbers and interprets refined Baxter numbers and their q-analogues via rectangular Schur specializations.
In Section 5, we establish a Schur positivity theorem and use it to prove the $q$-log-convexity of the Baxter polynomials. 
Section 6 presents several further log-behavior properties associated with the refined Baxter numbers. 
We first show that the Baxter transformation preserves log-convexity. 
Then we prove the $q$-log-concavity of the $q$-refined Baxter numbers both along each fixed row and along each fixed column and extend these results to the $q$-analogues of the $d$-Hoggatt numbers. 
The appendices contain several auxiliary symmetric function identities and computational details used in the proofs.

	\section{Background}\label{sec:background}
	In this section, we give an overview of relevant background on symmetric functions.
	Throughout this paper, we adopt the notation and terminology for partitions
	and symmetric functions in Stanley~\cite{Sta24}. 
	Given a nonnegative integer $n$, 
	a \emph{partition} $\lambda$ of $n$ is a weakly decreasing nonnegative integer sequence
    $(\lambda_1,\lambda_2,\ldots,\lambda_k)$
    such that $\sum_{i=1}^{k}\lambda_i=n$.
	The number of nonzero parts $\lambda_i$ is called the \emph{length} of $\lambda$,
	denoted by $\ell(\lambda)$.
    We write $i^{m_i}$ to indicate that the part $i$ occurs $m_i$ times in $\la$.

	The \emph{Young diagram} of $\lambda$ is a left-justified array of cells with $\ell(\lambda)$ rows and $\lambda_i$ cells in row $i$. 
	Transposing the diagram of $\lambda$, we get the conjugate partition of $\lambda$, denoted $\lambda'$. 
	  A cell $(i,j)$ is the cell in row $i$ and
    column $j$. 
	The \emph{hook length} of $(i,j)$, denoted $h(i,j)$, 
	is given by
	\begin{equation*}
	h(i,j)=\lambda_i+\lambda_j'-i-j+1.
	\end{equation*}
	The \emph{content} of $(i,j)$, denoted $c(i,j)$, is given by
	$c(i,j)=j-i.$
    
	Given two partitions $\lambda$ and $\mu$, we say that $\lambda$ \emph{contains} 
	$\mu$, denoted $\mu\subseteq\lambda$, if $\lambda_i\geq\mu_i$  for each $i$. 
	When $\mu\subseteq\lambda$, we can define a \emph{skew partition} $\lambda/\mu$ as the diagram obtained from the diagram of $\lambda$ by removing the cells in the top-left corner corresponding to the diagram of $\mu$.
	
	A \emph{semistandard Young tableau} of shape $\lambda/\mu$ is an array $T=[T_{ij}]$ of positive integers of shape $\lambda/\mu$ that is weakly increasing in every row and strictly increasing down every column.
	The \emph{type} of $T$ is defined as the composition
	$\alpha=(\alpha_1,\alpha_2,\ldots)$, where $\alpha_i$ is the number of $i$'s in $T$.
	Let $X$ denote the set of variables $\{x_1,x_2,\ldots\}$.
	If $T$ has type $\alpha$, then we write
	\begin{equation*}
	X^T=x_1^{\alpha_1}x_2^{\alpha_2}\cdots.
	\end{equation*}
	
	The \emph{skew Schur function} $s_{\lambda/\mu}(X)$ of shape $\lambda/\mu$ is
	defined as the generating function
	\begin{equation*}
	s_{\lambda/\mu}(X)=\sum_T X^T,
	\end{equation*}
	over all semistandard Young tableaux $T$ of shape $\lambda/\mu$
	filled with positive integers. 
	When $\mu$ is the empty partition $\varnothing$, we call $s_\lambda(X)$ the \emph{Schur function} of shape $\lambda$. 
    We set $s_{\emptyset}(X)=1$. Throughout this paper,
	we adopt the convention that $s_{\lambda}=0$ whenever $\lambda$
	does not define a valid partition.
    
A symmetric function $f$ is called \emph{Schur positive}
if the coefficients $a_{\lambda}$ in the expansion
$
f=\sum_{\lambda} a_{\lambda}s_{\lambda}
$
are all nonnegative,
and we denote $\left[s_{\lambda}\right]f=a_{\la}.$

For a Schur function $s_\lambda$,
define the standard involution $\omega(s_\lambda)=s_{\lambda'}$,
where $\lambda'$ is the conjugate partition of $\lambda$.
Let $f,g$ be two symmetric functions and $a,b\in\mathbb{R}$.
Then 
$$
\omega(af+bg)=a \omega(f)+b\omega(g) \quad \textrm{and} \quad \omega(fg)=\omega(f)\omega(g).
$$
\begin{lemma}\label{SPOMEGA}
Let $f$ be a symmetric function.
Then $f$ is Schur positive if and only if $\omega(f)$ is Schur positive.
\end{lemma}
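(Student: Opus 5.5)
The plan is to expand $f$ in the Schur basis and apply $\omega$ term by term. Every symmetric function of bounded degree is a finite linear combination of Schur functions, so we can write $f=\sum_{\lambda}a_\lambda s_\lambda$ with real coefficients $a_\lambda$, only finitely many nonzero in each degree. Linearity of $\omega$, together with $\omega(s_\lambda)=s_{\lambda'}$, then gives
\begin{equation*}
\omega(f)=\sum_{\lambda}a_\lambda s_{\lambda'}=\sum_{\mu}a_{\mu'}s_{\mu}.
\end{equation*}
The second equality is a reindexing via $\mu=\lambda'$. This is legitimate because conjugation $\lambda\mapsto\lambda'$ is a bijection on the set of partitions, indeed an involution since $(\lambda')'=\lambda$, and it preserves $|\lambda|$.

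Since the Schur functions form a basis, the coefficients of $\omega(f)$ are read off directly as $[s_\mu]\,\omega(f)=a_{\mu'}$. The forward direction follows immediately. If all $a_\lambda\ge 0$, then every coefficient $a_{\mu'}$ of $\omega(f)$ is nonnegative.

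For the converse, the cleanest route is to note that $\omega$ is an involution: $\omega^2(s_\lambda)=s_{\lambda''}=s_\lambda$, so by linearity $\omega^2=\mathrm{id}$. Applying the forward direction to $g=\omega(f)$ then shows that $f=\omega(g)$ is Schur positive whenever $g$ is. Alternatively, one can observe that the coefficient family $\{a_{\mu'}\}_\mu$ is just a reindexing of $\{a_\lambda\}_\lambda$, so one family is nonnegative exactly when the other is.

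There is no real obstacle here. The only point needing care is that the coefficient map is well defined, which uses the uniqueness of the Schur expansion (the Schur functions form a basis), and that conjugation is a bijection on partitions.
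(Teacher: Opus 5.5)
Your proof is correct and matches the paper's approach. The paper states this lemma without proof and treats it as immediate from linearity of $\omega$, $\omega(s_\lambda)=s_{\lambda'}$, and the fact that conjugation is an involution on partitions, so $\omega$ only permutes Schur coefficients.
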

	Let $Y=\{y_1,y_2,\ldots\}$ be another set of variables, and let
	$s_{\lambda/\mu}(X,Y)$ denote the skew Schur function in $X\cup Y$.
	The following basic property will be used later:
	
	\begin{equation}\label{multi}
		s_{\lambda/\mu}(X,Y)
		=
		\sum_{\nu}
		s_{\lambda/\nu}(X)s_{\nu/\mu}(Y),
	\end{equation}
	where the sum ranges over all partitions $\nu$ satisfying
	$\mu\subseteq\nu\subseteq\lambda$; see~\cite[Section 7]{Sta24}.

Let 
$$
e_k(X)=\sum_{i_1<\ldots<i_k} x_{i_1}\dots x_{i_k}
$$
be the \emph{elementary symmetric functions} with $e_0=1$ and $e_k=0$ for $k<0$; and let
$$
h_k(X)=\sum_{i_1\le\ldots\le i_k} x_{i_1}\dots x_{i_k}
$$
be the \emph{complete homogeneous symmetric functions}
with $h_0=1$ and $h_k=0$ for $k<0$.
Then $$e_k(X)=s_{(1^k)}(X) \quad \textrm{and} \quad h_k(X)=s_{(k)}(X).$$
The \emph{skew Jacobi--Trudi identity} (see~\cite[Theorem 7.16.1]{Sta24}) expresses the skew Schur function
$s_{\lambda/\mu}$ as a determinant of complete homogeneous symmetric
functions,
\begin{equation}\label{eq:Jacobi-Trudi}
	s_{\lambda/\mu}=\det(h_{\lambda_i-\mu_j-i+j})_{1\le i,j\le \ell(\la)}.
\end{equation}
Note that for a two-row partition $\la=(a,b)$ and $\mu=\varnothing$, where $a\geq b$,
the Jacobi--Trudi identity gives
\begin{equation}\label{eq:two-row-JT}
	s_{(a,b)}
	=
	\det
	\begin{bmatrix}
		h_a & h_{a+1}\\
		h_{b-1} & h_b
	\end{bmatrix}
	=
	h_a h_b-h_{a+1}h_{b-1}.
\end{equation}

Recall that a \emph{lattice permutation} of length $n$ is a sequence
$w_1w_2\cdots w_n$ such that for any $i$ and $j$ in the subsequence
$w_1w_2\cdots w_j$ the number of $i$'s is greater than or equal to
the number of $i+1$'s. 
Let $T$ be a semistandard Young tableau. 
The \emph{reverse reading word} $T^{\mathrm{rev}}$ is a sequence of entries of
$T$ obtained by first reading each row from right to left and then
concatenating the rows from top to bottom. 
If the reverse reading word $T^{\mathrm{rev}}$ is a lattice permutation, we call $T$ a \emph{Littlewood--Richardson tableau}. 
The \emph{Littlewood--Richardson rule} enables us to expand a product of Schur functions in terms of Schur functions. 
Given two Schur functions $s_{\mu}$ and $s_{\nu}$, Littlewood--Richardson coefficients
$c_{\mu,\nu}^\lambda$ are defined by the following relation
\begin{equation}\label{LRcoeff}
s_{\mu}s_{\nu}
=
\sum_{\lambda} c_{\mu,\nu}^{\lambda}s_{\lambda},
\end{equation}
where $c_{\mu,\nu}^{\lambda}$ is the number of Littlewood--Richardson tableaux of shape $\lambda/\mu$ and content $\nu$~\cite[A1.5]{Sta24}.
For example, if $\lambda=(4,3,1),\,\mu=(2,1),\,\nu=(3,2)$, then
$c_{\mu,\nu}^{\lambda}=2$. Indeed, there are two
Littlewood--Richardson tableaux of shape $\lambda/\mu$ and content $\nu$
as shown in Figure \ref{fig-1}.

\begin{figure}[h]
$$
\young{ * & * & 1 & 1 \cr
* & 1 & 2 \cr
2 \cr
}
\qquad\qquad
\young{ * & * & 1 & 1 \cr
* & 2 & 2 \cr
1 \cr
}
$$
\caption{Littlewood--Richardson tableaux for $\lambda=(4,3,1)$, $\mu=(2,1)$, $\nu=(3,2)$}
\label{fig-1}
\end{figure}

When $\nu=(k)$ or $\nu=(1^k)$ in~\eqref{LRcoeff}, the Littlewood--Richardson rule has
a simpler description, known as \emph{Pieri's rule}. 
We need the notion of horizontal and vertical strips. 
A skew partition $\lambda/\mu$ is called a \emph{horizontal} (or \emph{vertical}) \emph{strip} of size $k$ if there are $k$ cells in total with no two cells lying in the same column (resp.\ in the same row).

\begin{theorem}{\rm \cite[Theorem 7.15.7, Corollary 7.15.9]{Sta24}}
	\label{h-e}
	We have
	\begin{equation*}
		s_{\mu}h_k=\sum_{\lambda}s_{\lambda},
	\end{equation*}
	summed over all partitions $\lambda$ such that $\lambda/\mu$ is a horizontal strip of size $k$, and
	\begin{equation*}
		s_{\mu}e_k=\sum_{\lambda}s_{\lambda},
	\end{equation*}
	summed over all partitions $\lambda$ such that $\lambda/\mu$ is a vertical strip of size $k$.
\end{theorem}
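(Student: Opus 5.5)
The plan is to prove the horizontal-strip statement $s_\mu h_k=\sum_\lambda s_\lambda$ first, and then obtain the vertical-strip statement by applying the involution $\omega$. For the first formula I would use the Littlewood--Richardson rule~\eqref{LRcoeff} with $\nu=(k)$. Then $c^{\lambda}_{\mu,(k)}$ counts Littlewood--Richardson tableaux of shape $\lambda/\mu$ and content $(k)$. These are the fillings of $\lambda/\mu$ with $k$ entries, all equal to $1$. Such a filling is always a lattice word, so the only question is whether it is semistandard. Because all entries coincide, it is semistandard exactly when no column contains two cells of $\lambda/\mu$, that is, when $\lambda/\mu$ is a horizontal strip of size $k$. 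In that case the tableau is unique, so $c^{\lambda}_{\mu,(k)}\in\{0,1\}$, with value $1$ precisely for horizontal strips. This gives the first identity.

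For the second identity I would apply $\omega$ to the first identity with $\mu$ replaced by $\mu'$. Since $\omega$ is multiplicative and $\omega(h_k)=\omega(s_{(k)})=s_{(1^k)}=e_k$, we get
\[
s_{\mu}e_k=\omega(s_{\mu'}h_k)=\sum_{\lambda'} s_{\lambda},
\]
where the sum runs over all $\lambda'$ such that $\lambda'/\mu'$ is a horizontal strip of size $k$. Transposing the diagrams swaps rows and columns, so $\lambda'/\mu'$ is a horizontal strip exactly when $\lambda/\mu$ is a vertical strip. Re-indexing the sum by $\lambda$ then gives the claimed vertical-strip formula.

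The only point that needs care is in the first step: checking that a constant filling is semistandard exactly when no two of its cells share a column. Rows are automatically weakly increasing, while strict increase down columns fails precisely when a column contains two cells. If one prefers to avoid the Littlewood--Richardson rule, an alternative is to work directly from the definition $s_{\lambda/\mu}=\sum_T X^T$ and use the RSK-type row insertion bijection. Inserting a weakly increasing word into a semistandard tableau of shape $\mu$ adds a horizontal strip, and the process can be reversed by deleting the cells of that strip from right to left. I expect the invertibility of this insertion bijection to be the main technical obstacle on that route. Given the results already stated in the paper, however, the Littlewood--Richardson argument is immediate.
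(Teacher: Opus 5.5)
Your argument is correct. It is not the route of the source the paper relies on: the paper gives no proof of its own and simply cites Stanley, where Pieri's rule is established in the main text (Section~7.15) and the Littlewood--Richardson rule only later, in Appendix~1.

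You instead read off the horizontal-strip rule as the case $\nu=(k)$ of~\eqref{LRcoeff}. LR tableaux of content $(k)$ are constant fillings by $1$'s, the lattice condition is vacuous, and column-strictness is exactly the horizontal-strip condition. Applying $\omega$ to the rule for $\mu'$, with $\omega(h_k)=e_k$ and transposition of diagrams, then gives the vertical-strip rule. Relative to the facts the paper quotes, this is complete and very short.

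The cost is that an elementary fact is deduced from a much deeper theorem. Your argument is therefore non-circular only relative to a proof of the LR rule that does not itself use Pieri's rule. Stanley's jeu de taquin proof is fine in this respect, but the proofs that expand $s_\nu$ by Jacobi--Trudi and iterate Pieri's rule are not.

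A direct route reaches the same final combinatorial check more cheaply. By the adjointness $\langle s_\mu f,s_\lambda\rangle=\langle f,s_{\lambda/\mu}\rangle$ (a consequence of~\eqref{multi} and the Cauchy identity) and the duality of $\{h_\nu\}$ and $\{m_\nu\}$, the quantity $\langle s_\mu h_k,s_\lambda\rangle$ is the coefficient of $x_1^k$ in $s_{\lambda/\mu}$. That is the number of semistandard fillings of $\lambda/\mu$ by $k$ ones. This route also gives the general formula $s_\mu h_\nu=\sum_\lambda K_{\lambda/\mu,\nu}s_\lambda$.

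Your RSK alternative is likewise a valid self-contained proof: row insertion of a weakly increasing word adds a horizontal strip, and it is reversed by deleting the strip's cells from right to left.
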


\section{Main results}\label{sec:main}
In this section, we combine the Pfaffian minor summation formula~\eqref{pffor} with Jacobi--Trudi Toeplitz matrices
to derive some identities involving symmetric functions.
We first recall some basic properties of $\Pf(M)$; see~\cite{Knu96}.	
	\begin{lemma}\label{lem-pf}
		Let $M=[M_{ij}]_{1\le i,j\le 2n}$ be a skew-symmetric matrix.
		\begin{itemize}
			\item[\rm(i)] For a fixed row index $i$,
			if $M_{ij}=0$ for $1\le j\le 2n,$ then $\Pf(M) = 0$.
			\item[\rm(ii)] The first-row expansion formula for the Pfaffian is
			\begin{equation*}
				\Pf(M)=\sum_{j=2}^{2n}
				(-1)^j M_{1j}
				\Pf\left(M_{[2n]\setminus\{1,j\},[2n]\setminus\{1,j\}}\right).
			\end{equation*}
		\end{itemize}
	\end{lemma}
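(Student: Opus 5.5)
Both parts follow directly from the permutation-sum definition of $\Pf(M)$ given in the introduction. First I reorganize that sum over perfect matchings. The summand $\mathrm{sgn}(\sigma)\prod_{i=1}^n M_{\sigma(2i-1),\sigma(2i)}$ is unchanged if we swap $\sigma(2i-1)$ and $\sigma(2i)$: the sign flips, and by skew-symmetry the factor $M_{\sigma(2i-1),\sigma(2i)}$ also flips. The summand is likewise unchanged if we permute the $n$ pairs among themselves, since that is an even permutation of positions. Hence the $2^n n!$ permutations inducing a given perfect matching $\pi$ of $[2n]$ all contribute equally. This gives
\[
\Pf(M)=\sum_{\pi}\varepsilon(\pi)\prod_{\{a,b\}\in\pi,\,a<b}M_{ab},
\]
where $\pi$ runs over perfect matchings of $[2n]$ and $\varepsilon(\pi)$ is the sign of the permutation $a_1b_1a_2b_2\cdots a_nb_n$ obtained by listing the pairs with $a_i<b_i$ in any order.

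For part (i), every perfect matching $\pi$ contains exactly one pair containing the fixed index $i$, say $\{i,j\}$. So each product $\prod_{\{a,b\}\in\pi} M_{ab}$ contains the factor $M_{ij}$ or $M_{ji}=-M_{ij}$, and this factor is $0$ by hypothesis. Every term of the matching sum therefore vanishes, and $\Pf(M)=0$. (Equivalently, in the original formula every product contains some $M_{i,\cdot}$ or $M_{\cdot,i}$, each of which is zero.)

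For part (ii), I group the matchings by the partner $j\in\{2,\dots,2n\}$ of the index $1$. A matching with $\{1,j\}\in\pi$ corresponds bijectively to a perfect matching $\pi'$ of $[2n]\setminus\{1,j\}$, and its product is $M_{1j}$ times the product for $\pi'$.

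The main point, and the only real obstacle, is the sign relation $\varepsilon(\pi)=(-1)^{j}\,\varepsilon'(\pi')$. Here $\varepsilon'$ is computed after relabeling $[2n]\setminus\{1,j\}$ increasingly as $1,\dots,2n-2$, which is exactly how $\Pf$ of the submatrix is indexed. To prove it, list $\pi$ with the pair $(1,j)$ first, followed by the word $w'$ for $\pi'$. The word $1\,j\,w'$ is obtained from a word order-isomorphic to the relabeled $w'$ by inserting the values $1$ and $j$ in front. The inversions involving the prefix are as follows:
\begin{itemize}
\item the leading $1$ creates none;
\item $j$ is inverted with exactly the elements $2,\dots,j-1$ lying after it, which gives $j-2$ inversions.
\end{itemize}
Hence $\mathrm{sgn}(1\,j\,w')=(-1)^{j-2}\,\mathrm{sgn}(w')=(-1)^j\,\mathrm{sgn}(w')$, since relabeling $w'$ order-preservingly does not change its inversion count.

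Summing over $j$ and over $\pi'$ then gives exactly the stated expansion
\[
\Pf(M)=\sum_{j=2}^{2n}(-1)^j M_{1j}\,\Pf\left(M_{[2n]\setminus\{1,j\},[2n]\setminus\{1,j\}}\right).
\]
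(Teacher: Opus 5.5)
The paper gives no proof of this lemma. It recalls (i) and (ii) as standard properties of Pfaffians and cites Knuth's \emph{Overlapping Pfaffians}, so there is no internal argument to compare against. Your proposal is correct and supplies the standard self-contained derivation.

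First, you collapse the $\frac{1}{2^n n!}$-normalized permutation sum to a signed sum over perfect matchings. Each matching arises from exactly $2^n n!$ permutations, and these all give the same summand, by skew-symmetry and because block permutations of the pairs are even. Part (i) is then immediate: every matching uses the entry $M_{\min(i,j),\max(i,j)}$, which vanishes either by hypothesis or by skew-symmetry. Part (ii) reduces to your inversion count $\mathrm{sgn}(1\,j\,w')=(-1)^{j-2}\,\mathrm{sgn}(w')$, which is right: the leading $1$ contributes no inversions, and $j$ is inverted exactly with $2,\dots,j-1$.

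What your route adds over the bare citation is a check of the sign convention in (ii) against the normalization the paper uses in its introduction. It agrees, for instance, with the $4\times 4$ formula $\Pf(M)=M_{12}M_{34}-M_{13}M_{24}+M_{14}M_{23}$ that the paper relies on in Section 3.

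The only thing you leave implicit is the convention that the Pfaffian of the empty $0\times 0$ matrix equals $1$, which is needed for $n=1$ in (ii). Your matching formulation supplies this automatically through the empty matching, but it is worth stating.
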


Throughout this section, the row and column indices of all matrices start at \(1\), and we use 
$\{j_1,j_2,\cdots,j_m\}_<$ to denote
the set satisfying $j_1<j_2<\cdots<j_m$.
Recall that $h_k$
is the $k$-th complete homogeneous symmetric function.
Let $m$ be a positive integer and $\mu$ be a partition with $\ell(\mu)\le m$.
Define the row--truncated Toeplitz
matrix
\begin{equation*}
H^{(m)}_\mu=\bigl[h_{j-i-\mu_{m-i+1}}\bigr]_{1\le i\le m,\ j\ge1}.
\end{equation*}
Define $H^{(m)}= H^{(m)}_\varnothing.$
The following is a corollary of the skew Jacobi--Trudi identity.
\begin{lemma}\label{lem:skew-toeplitz-minor}
	Let $J=\{j_1,\ldots,j_m\}_<$. Then we have
	\begin{equation*}
	\det\left(H^{(m)}_\mu\right)_{[m],J}=s_{\lambda/\mu},
	\end{equation*}
	where $\lambda=\bigl(j_m-m,j_{m-1}-(m-1),\ldots,j_1-1\bigr)$.
\end{lemma}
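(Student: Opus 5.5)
The argument is a direct re-indexing of the skew Jacobi--Trudi identity~\eqref{eq:Jacobi-Trudi}, followed by a reversal of the row and column order. The plan is to write down the submatrix $\left(H^{(m)}_\mu\right)_{[m],J}$ explicitly: its $(i,r)$ entry is $h_{j_r-i-\mu_{m-i+1}}$ for $1\le i,r\le m$. Then I reverse both the row order and the column order, i.e. substitute $i=m-a+1$ and $r=m-b+1$ with $1\le a,b\le m$. Reversing the rows multiplies the determinant by the sign of the reversal permutation of $[m]$, and reversing the columns multiplies it by the same sign, so the determinant is unchanged.

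After the substitution, the $(a,b)$ entry becomes $h_{j_{m-b+1}-(m-a+1)-\mu_a}$. The main step is to match this exponent with the Jacobi--Trudi form $\lambda_b-\mu_a-b+a$. This gives a matrix whose rows are indexed by the parts of $\mu$ and whose columns are indexed by the parts of $\lambda$, i.e. the transpose of the usual Jacobi--Trudi matrix. Setting $\lambda_b=j_{m-b+1}-(m-b+1)$, we get
\[
\lambda_b-\mu_a-b+a = j_{m-b+1}-m+b-1-\mu_a-b+a = j_{m-b+1}-(m-a+1)-\mu_a,
\]
which is exactly the entry above. So the reversed matrix is the transpose of $\bigl(h_{\lambda_b-\mu_a-b+a}\bigr)$. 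Taking the transpose leaves the determinant unchanged and yields $\det\bigl(h_{\lambda_i-\mu_j-i+j}\bigr)_{1\le i,j\le m}$.

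Next, I will check that $\lambda$ is a genuine partition. Because $j_1<\dots<j_m$ are strictly increasing positive integers, we have $j_{r+1}-(r+1)\ge j_r-r\ge 0$, so $\lambda=(j_m-m,\ldots,j_1-1)$ is weakly decreasing and nonnegative.

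The one subtle point is applying~\eqref{eq:Jacobi-Trudi} with an $m\times m$ matrix rather than an $\ell(\lambda)\times\ell(\lambda)$ one. I will justify this with the standard fact that the Jacobi--Trudi determinant is stable under padding $\lambda$ and $\mu$ with zeros up to any size $N\ge\ell(\lambda)$. The extra rows are upper unitriangular: their entries are $h_0=1$ on the diagonal and $h_{\text{negative}}=0$ beyond it, so the determinant does not change.

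I will also handle the case $\mu\not\subseteq\lambda$. There the determinant is $0$, which matches the convention $s_{\lambda/\mu}=0$. I expect this and the size-padding issue to be the only mildly delicate steps, and I will settle both by citing the general form in~\cite[Theorem 7.16.1]{Sta24}, which is valid for any $N\ge\ell(\lambda)$.
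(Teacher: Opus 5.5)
Your proposal is correct and follows essentially the same route as the paper: write out the $m\times m$ submatrix, reverse the row and column orders (the two sign changes cancel), transpose, and recognize the skew Jacobi--Trudi matrix with $\lambda_b=j_{m-b+1}-(m-b+1)$. Your additional remarks are sound and make explicit points the paper leaves implicit: that $\lambda$ is a genuine partition, that the Jacobi--Trudi determinant is stable under padding to size $m\ge\ell(\lambda)$, and that the determinant vanishes when $\mu\not\subseteq\lambda$.
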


\begin{proof}
Let $H=H_\mu^{(m)}$.
By definition,
$$
H_{[m],\{j_1,j_2,\cdots,j_m\}}
=
\begin{bmatrix}
	h_{j_1-1-\mu_m}     & h_{j_2-1-\mu_m} & \cdots & h_{j_m-1-\mu_m}\\
	h_{j_1-2-\mu_{m-1}} &  h_{j_2-2-\mu_{m-1}}   & \cdots & h_{j_m-2-\mu_{m-1}}\\
	\vdots & \vdots & \ddots   & \vdots\\
	h_{j_1-m-\mu_1} & h_{j_2-m-\mu_1} &\vdots & h_{j_m-m-\mu_1}
\end{bmatrix}.
$$
After reversing the orders of both the rows and columns and then transposing the matrix, we obtain, by the skew Jacobi--Trudi identity~\eqref{eq:Jacobi-Trudi},
$$
\det H_{[m],\{j_1,j_2,\cdots,j_m\}}
=\det\begin{bmatrix}
h_{j_m-m-\mu_1}     & h_{j_m-m+1-\mu_2} & \cdots & h_{j_m-1-\mu_m}\\
h_{j_{m-1}-m-\mu_1}  & h_{j_{m-1}-m+1-\mu_2} & \cdots & h_{j_{m-1}-1-\mu_m}\\
\vdots & \vdots & \ddots   & \vdots\\
h_{j_1-m-\mu_1} & h_{j_1-m+1-\mu_2} &\vdots & h_{j_1-1-\mu_m}
\end{bmatrix}=s_{\lambda/\mu},
$$
where $\la_k=j_{m-k+1}-(m-k+1)$ for $k=1,2,\dots,m$.
\end{proof}

\subsection{Toeplitz--Pfaffian Schur expansions}

Let $\mathbf v=(v_1,v_2,\ldots)$ be commuting indeterminates, and define
the infinite skew-symmetric matrix $\Omega(\mathbf v)$ by
\begin{equation*}
\Omega(\mathbf v)_{i,i+1}=v_i,\quad
\Omega(\mathbf v)_{i+1,i}=-v_i\quad (i\ge1),
\end{equation*}
with all other entries equal to zero.
Throughout this subsection, all infinite sums are interpreted
coefficientwise with respect to the usual grading of the ring of
symmetric functions.

\begin{theorem}
	\label{thm:weighted-skew-pfaffian}
	For $n\ge1$ and $\ell(\mu)\le2n$, we have
	\begin{equation}\label{eq:weighted-skew-pfaffian}
		\Pf\left(H^{(2n)}_\mu\Omega(\mathbf v)
		\bigl(H^{(2n)}_\mu\bigr)^{\mathsf T}\right)
		=
		\sum_{0\le a_1\le\cdots\le a_n}
		s_{(a_n^2,a_{n-1}^2,\ldots,a_1^2)/\mu}
		\prod_{r=1}^n v_{a_r+2r-1}.
	\end{equation}
\end{theorem}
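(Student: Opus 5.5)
Apply the Pfaffian minor summation formula (Lemma~\ref{pfcomp}) with $H=H^{(2n)}_\mu$ and $\Omega=\Omega(\mathbf v)$, taking $m=2n$. The formula is stated for finite $n\times n$ matrices, so first truncate. In each fixed degree only finitely many columns contribute. The entry $h_{j-i-\mu_{2n-i+1}}$ has degree $j-i-\mu_{2n-i+1}$, so a minor on columns $J$ has degree at least roughly $\sum_k j_k - \binom{2n+1}{2}-|\mu|$. Hence the Pfaffian can be computed with $H$ restricted to its first $N$ columns and $\Omega$ to its leading $N\times N$ block, for $N$ large relative to the degree under consideration. The same truncation is harmless on the right-hand side, and letting $N\to\infty$ gives
\begin{equation*}
\Pf\bigl(H^{(2n)}_\mu\Omega(\mathbf v)(H^{(2n)}_\mu)^{\mathsf T}\bigr)=\sum_{J=\{j_1,\dots,j_{2n}\}_<}\det\bigl(H^{(2n)}_\mu\bigr)_{[2n],J}\,\Pf\bigl(\Omega(\mathbf v)_{J,J}\bigr),
\end{equation*}
interpreted coefficientwise.

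\textbf{Step 1: the sparse Pfaffian.} We claim that $\Pf(\Omega(\mathbf v)_{J,J})$ vanishes unless $J$ is a union of $n$ pairs of consecutive integers $\{j_{2r-1},j_{2r}\}$ with $j_{2r}=j_{2r-1}+1$. In that case it equals $\prod_r v_{j_{2r-1}}$.

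The argument is by induction on $n$, using the first-row expansion in Lemma~\ref{lem-pf}(ii). In $\Omega_{J,J}$, row $1$ (indexed by $j_1$) has only one possible nonzero entry, in the column indexed by $j_2$, and only when $j_2=j_1+1$. If $j_2\ne j_1+1$, that row is zero and Lemma~\ref{lem-pf}(i) gives $0$. Otherwise the expansion collapses to the single term $(-1)^2 v_{j_1}\Pf(\Omega_{J',J'})$ with $J'=J\setminus\{j_1,j_2\}$. No entry links $j_2$ to $j_3$ inside $J'$, so the induction proceeds.

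\textbf{Step 2: reindexing to partitions.} By Lemma~\ref{lem:skew-toeplitz-minor}, the minor equals $s_{\lambda/\mu}$ with $\lambda_k=j_{2n-k+1}-(2n-k+1)$. Write $j_{2r-1}=a_r+2r-1$ and $j_{2r}=a_r+2r$. Then $j_{2r}-2r=a_r$ and $j_{2r-1}-(2r-1)=a_r$, so $\lambda=(a_n,a_n,a_{n-1},a_{n-1},\dots,a_1,a_1)$.

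The constraint $j_{2r+1}>j_{2r}$ reads $a_{r+1}+2r+1>a_r+2r$, which is exactly $a_{r+1}\ge a_r$. The constraint $j_1\ge1$ reads $a_1\ge0$. So $J\mapsto(a_1,\dots,a_n)$ is a bijection from admissible $J$ onto weakly increasing sequences $0\le a_1\le\cdots\le a_n$. The weight becomes $\prod_r v_{a_r+2r-1}$, which yields \eqref{eq:weighted-skew-pfaffian}. Terms with $\mu\not\subseteq\lambda$ vanish automatically, since such a skew Schur function is $0$ by the Jacobi--Trudi convention.

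\textbf{Main obstacle.} The only delicate point is justifying the infinite-matrix use of Lemma~\ref{pfcomp}. Each homogeneous component must involve only finitely many $J$, and the truncated Pfaffian must agree with the infinite product $H\Omega H^{\mathsf T}$ in that degree.

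For finiteness, the degree of $s_{\lambda/\mu}$ is $2\sum a_r-|\mu|$. Each entry $(H\Omega H^{\mathsf T})_{ik}=\sum_j v_j\bigl(H_{ij}H_{k,j+1}-H_{i,j+1}H_{kj}\bigr)$ is a degree-graded sum with finitely many terms in each degree. For agreement, grade by symmetric-function degree only, treating the $v_j$ as coefficients. Then truncating at column $N$ changes only components of degree on the order of $N-2n-\mu_1$ or higher. This follows because every omitted term involves some $h_d$ with $d\ge N-2n-\mu_1$, which settles the limit.
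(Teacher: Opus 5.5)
Your proposal is correct and follows essentially the same route as the paper: truncate to $N$ columns, apply the minor summation formula, observe that $\Pf\bigl(\Omega(\mathbf v)_{J,J}\bigr)$ is nonzero only when $J$ splits into consecutive pairs, reindex by $a_r=j_{2r-1}-(2r-1)$ to obtain the shape $(a_n,a_n,\ldots,a_1,a_1)$ via Lemma~\ref{lem:skew-toeplitz-minor}, and let $N\to\infty$. You also supply details the paper only asserts: the inductive first-row-expansion argument for the sparse Pfaffian, and the explicit degree bound (every omitted term contains some $h_d$ with $d\ge N-2n-\mu_1$) that justifies the coefficientwise limit.
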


\begin{proof}
For $N\geq2n$, let $H= H^{(2n)}_\mu$ and let $H_{[2n],[N]}$ be the submatrix of $H^{(2n)}_\mu$ with the first $N$ columns.
Let $\Omega_N(\mathbf v)$ be the $N$-th leading principal submatrix of $\Omega(\mathbf v)$.
Lemma~\ref{pfcomp} gives
\begin{equation}
\Pf\left(H_{[2n],[N]} \Omega_N (\mathbf v)\left(H_{[2n],[N]}\right)^{\mathsf T}\right)=
\sum_{\substack{J\subseteq[N]\\|J|=2n}}
\det H_{[2n],J}\,\Pf\left(\left(\Omega_N(\mathbf v)\right)_{J,J}\right).
\label{eq:first-finite-expansion}
\end{equation}

Set $J=\{j_1,\cdots,j_{2n}\}_<$.
Since the only nonzero entries of $\Omega(\mathbf v)$ join consecutive indices, its Pfaffian is nonzero exactly
when
$
j_{2r}=j_{2r-1}+1,
$
for $1\leq r\leq n.$
In this case,
\begin{equation*}
J=\{j_1,j_1+1, j_3,j_3+1,\cdots,j_{2n-1},j_{2n-1}+1\}
\end{equation*}
and 
$$
\left(\Omega_N(\mathbf v)\right)_{J,J}
	=
	\begin{bmatrix}
		0     & v_{j_1} & &&&&  \\
		 -v_{j_1}      & 0  & \ast&&&& \\
		 & \ast & 0  & v_{j_3}&&&\\
		  & &-v_{j_3}&0 &\ast & &\\
        &&&\ddots&\ddots&\ddots&\\
        &&&&\ast&0&v_{j_{2n-1}}\\
        &&&&&-v_{j_{2n-1}}&0
	\end{bmatrix}.
$$
By the definition of the Pfaffian, we have
$$
\Pf\left(\left(\Omega_N(\mathbf v)\right)_{J,J}\right)=\prod_{r=1}^n v_{j_{2r-1}}.
$$
Let
$
a_r=j_{2r-1}-(2r-1).
$
Then we have $0\leq a_1\leq\cdots\leq a_n$, and
$$
\Pf\left(\left(\Omega_N(\mathbf v)\right)_{J,J}\right)=\prod_{r=1}^n v_{a_r+2r-1}.
$$

Note that
$$
j_{2r-1}=a_r+2r-1,\quad j_{2r}=a_r+2r.
$$
By Lemma~\ref{lem:skew-toeplitz-minor}, we obtain  $\det H_{[2n],J}=s_{\lambda/\mu},$ where
$$
\lambda=(j_{2n-1}+1-2n,j_{2n-1}-(2n-1),\ldots,j_1-1)=(a_n,a_n,a_{n-1},a_{n-1},\ldots,a_1,a_1).
$$
For every fixed homogeneous degree, only finitely many tuples
$(a_1,\ldots,a_n)$ contribute.
Hence both sides stabilize coefficientwise as $N\to\infty$.
Substitution into \eqref{eq:first-finite-expansion} and letting $N\to\infty$ proves \eqref{eq:weighted-skew-pfaffian}.
\end{proof}


Recall that $H^{(2n)}=H^{(2n)}_\varnothing$.
The following is an immediate consequence of Theorem~\ref{thm:weighted-skew-pfaffian}.
\begin{coro}\label{thm:PfM}
Let $\mathbf v=(v_1,v_2,\ldots)=(1,t,t^2,\ldots)$,
i.e., $v_i=t^{i-1}$.
Then for every $n\geq 1$, 
\begin{equation*}
\Pf\left(H^{(2n)}\Omega(\mathbf v)\left(H^{(2n)}\right)^{\mathsf T}\right)=
\sum_{0\leq a_1\leq\cdots\leq a_n}
s_{(a_n^2,a_{n-1}^2,\ldots,a_1^2)}\,
t^{n(n-1)+\sum_{r=1}^n a_r}.
\end{equation*}
\end{coro}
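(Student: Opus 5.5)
The plan is to specialize Theorem~\ref{thm:weighted-skew-pfaffian} directly, taking $\mu=\varnothing$ and $v_i=t^{i-1}$. The hypothesis $\ell(\mu)\le 2n$ is trivially satisfied when $\mu=\varnothing$. In that case $H^{(2n)}_\varnothing=H^{(2n)}$ by definition, so the left-hand side of \eqref{eq:weighted-skew-pfaffian} is exactly the Pfaffian in the corollary. On the right-hand side, each skew Schur function $s_{(a_n^2,\ldots,a_1^2)/\varnothing}$ is simply the ordinary Schur function $s_{(a_n^2,\ldots,a_1^2)}$.

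It then remains to evaluate the weight under this specialization:
\[
\prod_{r=1}^n v_{a_r+2r-1}=\prod_{r=1}^n t^{a_r+2r-2}=t^{\sum_{r}a_r+\sum_{r=1}^n(2r-2)}.
\]
Since $\sum_{r=1}^n(2r-2)=n(n+1)-2n=n(n-1)$, this matches the exponent in the statement.

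The only point needing a little care is the meaning of the infinite sum after specialization. The specialization is a ring homomorphism $\mathbb{Z}[\mathbf v]\to\mathbb{Z}[t]$ applied to the coefficients. Theorem~\ref{thm:weighted-skew-pfaffian} is an identity in which, in each fixed homogeneous degree of the symmetric functions, only finitely many tuples $(a_1,\ldots,a_n)$ contribute, because $s_{(a_n^2,\ldots,a_1^2)}$ has degree $2\sum_r a_r$. The identity therefore holds degree by degree as an equality of finite sums, and it survives the substitution $v_i\mapsto t^{i-1}$ coefficientwise. No genuine obstacle is expected; the computation of the $t$-exponent is the only step to check.
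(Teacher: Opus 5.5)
Your proposal is correct and matches the paper, which obtains this corollary as an immediate specialization of Theorem~\ref{thm:weighted-skew-pfaffian} with $\mu=\varnothing$ and $v_i=t^{i-1}$. Your exponent check $\sum_{r=1}^n(2r-2)=n(n-1)$ and your degree-by-degree justification of the substitution are exactly what is needed.
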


Define
$\Omega^{(p)}(\mathbf v)$ by
\begin{equation}\label{def:Omega}
\Omega^{(p)}(\mathbf v)_{i,i+1}=v_i,\quad
\Omega^{(p)}(\mathbf v)_{i+1,i}=-v_i\quad (i\ge p),
\end{equation}
with all other entries equal to zero.
In other words,
$\Omega^{(p)}(\mathbf v)=\Omega(\mathbf v)$ with $v_1=\cdots=v_{p-1}=0.$
For $n\ge 0$  and $p\ge 2$,
define
\begin{equation*}
\widetilde H^{(2n+1)}_\mu =
\begin{bmatrix}
1&0\\
0&H^{(2n+1)}_\mu
\end{bmatrix},
\quad
\widetilde\Omega^{(p)}(\mathbf v) =
\begin{bmatrix}
0&w_{p-1}\\
-w_{p-1}^{\mathsf T}&\Omega^{(p)}(\mathbf v)
\end{bmatrix},
\end{equation*}
where $w_r=(0,\dots,0,1,0,\dots)$ is a row vector with a $1$ at the $r$-th position.
For example, 
\begin{equation*}
\widetilde H^{(3)}=
\begin{bmatrix}
1 &0 & 0& 0& \cdots& \\
& h_0 & h_1 & h_2 &\cdots \\
& & h_0 & h_1 & \cdots \\
& & & h_0 & \cdots \\
\end{bmatrix},
\quad 	
\widetilde\Omega^{(3)}(\mathbf v)=
\begin{bmatrix}
0      & 0      & 1      & 0   & 0   & 0   & \cdots \\
0      & 0      &        &        &        &        &        \\
-1    &        & 0      &        &        &        &        \\
0  &        &        & 0      & v_3      &        &        \\
0   &        &   & -v_3     & 0      &v_4      &        \\
0   &        &        &        & -v_4    & 0      & \ddots \\
\vdots &        &        &        &        & \ddots & \ddots
\end{bmatrix}.
\end{equation*}
Note that $\widetilde H^{(2n+1)}_\mu \widetilde\Omega^{(p)}(\mathbf v) \bigl(\widetilde H^{(2n+1)}_\mu\bigr)^{\mathsf T}$ equals
\begin{align*}
\begin{bmatrix}
	0&w_{p-1}(H^{(2n+1)}_\mu)^{\mathsf T}\\
			-H^{(2n+1)}_\mu w_{p-1}^{\mathsf T}& H^{(2n+1)}_\mu \Omega^{(p)}(\mathbf v) \bigl( H^{(2n+1)}_\mu\bigr)^{\mathsf T}
		\end{bmatrix}.
\end{align*}

\begin{theorem}
	\label{thm:augmented-skew-pfaffian}
	For $n\ge0$, $p\ge2$, and $\ell(\mu)\le2n+1$, we have
	\begin{equation}\label{eq:fully-weighted-augmented-skew-pfaffian}
		\Pf\left(\widetilde H^{(2n+1)}_\mu
		\widetilde\Omega^{(p)}(\mathbf v)
		\bigl(\widetilde H^{(2n+1)}_\mu\bigr)^{\mathsf T}\right)
		=
		\sum_{0\leq a_1\leq\cdots\leq a_n}
s_{\bigl((a_n+p-2)^2,\ldots,(a_1+p-2)^2,p-2\bigr)/\mu}\prod_{r=1}^n v_{a_r+p+2r-2}.
	\end{equation}
\end{theorem}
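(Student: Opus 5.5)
Mirroring the proof of Theorem~\ref{thm:weighted-skew-pfaffian}, I would truncate to finitely many columns and apply the minor summation formula. Fix $N\ge 2n+p$. Let $\widetilde H_N$ be the $(2n+2)\times(N+1)$ submatrix of $\widetilde H^{(2n+1)}_\mu$ consisting of its first $N+1$ columns, and let $\widetilde\Omega_N$ be the leading $(N+1)\times(N+1)$ principal submatrix of $\widetilde\Omega^{(p)}(\mathbf v)$. Index the columns by $0,1,\dots,N$, where column $0$ is the extra one. Lemma~\ref{pfcomp} then gives
\begin{equation*}
\Pf\bigl(\widetilde H_N\widetilde\Omega_N\widetilde H_N^{\mathsf T}\bigr)=\sum_{|J|=2n+2}\det(\widetilde H_N)_{[2n+2],J}\,\Pf\bigl((\widetilde\Omega_N)_{J,J}\bigr).
\end{equation*}
The first row of $\widetilde H_N$ is $(1,0,0,\dots)$. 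So if $0\notin J$, the first row of the minor vanishes and its determinant is zero. We may therefore write $J=\{0\}\cup J'$ with $J'\subseteq[N]$ and $|J'|=2n+1$. Expanding the minor along its first row gives $\det(\widetilde H_N)_{[2n+2],J}=\det H^{(2n+1)}_{\mu,[2n+1],J'}$.

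Next I determine which $J'$ give a nonzero Pfaffian. In $(\widetilde\Omega_N)_{J,J}$ the index $0$ is joined only to $p-1$, with entry $1$. All other nonzero entries join consecutive indices $i,i+1$ with $i\ge p$. Expanding along the first row by Lemma~\ref{lem-pf}(ii), the Pfaffian vanishes unless $p-1\in J'$. In that case $p-1$ is the smallest element $j_1$ of $J'$. Indeed, every smaller index $i<p-1$ in $J'$ would give a zero row in the complementary matrix, and Lemma~\ref{lem-pf}(i) would kill the Pfaffian. Also, $p-1$ is adjacent to $0$ only, since the edge $(p-1,p)$ carries $v_{p-1}=0$. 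Thus the term $(-1)^{j}$ corresponds to $j=2$, and the sign is $+1$. What remains is $\Pf$ of $\Omega^{(p)}(\mathbf v)$ restricted to $J'\setminus\{p-1\}=\{j_2<\dots<j_{2n+1}\}$. As in the proof of Theorem~\ref{thm:weighted-skew-pfaffian}, this Pfaffian is nonzero exactly when $j_{2r+1}=j_{2r}+1$, and it then equals $\prod_{r=1}^n v_{j_{2r}}$, with every $j_{2r}\ge p$ automatically.

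The last step is the reparametrization. Set $a_r=j_{2r}-(p+2r-2)$. The conditions $j_2\ge p$ and $j_{2r+2}\ge j_{2r+1}+1=j_{2r}+2$ translate exactly to $0\le a_1\le\cdots\le a_n$. The weight then becomes $\prod_r v_{a_r+p+2r-2}$. By Lemma~\ref{lem:skew-toeplitz-minor} with $m=2n+1$, the minor equals $s_{\lambda/\mu}$ with $\lambda_k=j_{2n+2-k}-(2n+2-k)$. This gives $\lambda_{2n+1}=j_1-1=p-2$. For $k=2n+1-2r$ it gives $\lambda_k=j_{2r+1}-(2r+1)=a_r+p-2$, and for $k=2n+2-2r$ it gives $\lambda_k=j_{2r}-2r=a_r+p-2$. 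Hence $\lambda=\bigl((a_n+p-2)^2,\dots,(a_1+p-2)^2,p-2\bigr)$, as claimed.

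As before, in each fixed degree only finitely many tuples contribute, so letting $N\to\infty$ coefficientwise finishes the proof. I expect the main obstacle to be the sign and support bookkeeping in the Pfaffian of $(\widetilde\Omega_N)_{J,J}$, in particular checking that the forced pairing $\{0,p-1\}$ contributes the sign $+1$ and excludes all smaller indices.
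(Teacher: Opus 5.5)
Your proposal is correct and follows the paper's proof essentially step for step. You truncate the matrices, apply the minor summation formula, force the extra column and the column $p-1$ (the paper's $j_1=1$, $j_2=p$ in its 1-based indexing), pair the remaining indices consecutively, and read off $\lambda$ from Lemma~\ref{lem:skew-toeplitz-minor}. The differences are cosmetic: you label the extra column $0$, and you spell out why no index below $p-1$ can occur and why the forced pairing contributes sign $+1$, which the paper leaves implicit.
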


\begin{proof}
For $N\geq \max\{2n+2,p\}$, 
let $\widetilde H= \widetilde H^{(2n+1)}_\mu$ and let $\widetilde H_{[2n+2],[N]}$ be the submatrix of $\widetilde H^{(2n+1)}_\mu$ with the first $N$ columns.
Let $\widetilde \Omega^{(p)}_N(\mathbf v)$ be the $N$-th leading principal submatrix of $\widetilde\Omega^{(p)}(\mathbf v)$.
Lemma~\ref{pfcomp} gives
\begin{equation}
\Pf\left(\widetilde H_{[2n+2],[N]} \widetilde\Omega^{(p)}_N (\mathbf v) \left(\widetilde H_{[2n+2],[N]}\right)^T\right)=
\sum_{\substack{J\subseteq[N]\\|J|=2n+2}}
\det \widetilde H_{[2n+2],J}\,\Pf\left(\left(\widetilde\Omega^{(p)}_N(\mathbf v)\right)_{J,J}\right).
\label{eq:second-finite-expansion}
\end{equation}

Set $J=\{j_1,j_2,\cdots,j_{2n+2}\}_<$.
Since the first row of $\widetilde H^{(2n+1)}$ has its only nonzero entry in the first column, 
$$
\det \widetilde H_{[2n+2],J}=0
$$
unless $j_1=1$.
Assume henceforth that $j_1=1$.
By the definition of
$\widetilde{\Omega}^{(p)}_N(\mathbf{v})$, its first row is nonzero only
in column $p$. 
Consequently,
$$\Pf\left(\left(\widetilde\Omega^{(p)}_N(\mathbf v)\right)_{J,J}\right)=0$$
unless $j_2=p$.
Suppose that $j_1=1,j_2=p$. For the remaining indices, a necessary condition for a nonzero Pfaffian is that $j_{2r+2}=j_{2r+1}+1$  for all $ 1\leq r\leq n$.
In this case, we have
\begin{equation*}
J=\{1,p, j_3,j_3+1,j_5,j_5+1,\dots, j_{2n+1},j_{2n+1}+1\}
\end{equation*}
and 
$$
\left(\widetilde\Omega^{(p)}_N(\mathbf v)\right)_{J,J}
	=
	\begin{bmatrix}
    0  & 1 & &&&&  \\ 
    -1 & 0  & \ast&&&& \\
    & \ast & 0  & v_{j_3-1}&&&\\ 
    & &-v_{j_3-1}&0 &\ast & &\\         &&&\ddots&\ddots&\ddots&\\         &&&&\ast&0&v_{j_{2n+1}-1}\\
    &&&&&-v_{j_{2n+1}-1}&0 
    \end{bmatrix}.
$$
By the definition of the Pfaffian,  we have
$$
\Pf\left(\left(\widetilde\Omega^{(p)}_N(\mathbf v)\right)_{J,J}\right)
=\prod_{r=1}^n v_{j_{2r+1}-1}.
$$
Set $a_r=j_{2r+1}-p-2r+1$. Then we have $0\leq a_1\leq a_2\leq\cdots\leq a_n$, 
and
$$
\Pf\left(\left(\widetilde\Omega^{(p)}_N(\mathbf v)\right)_{J,J}\right)
=\prod_{r=1}^n v_{a_r+p+2r-2}.
$$

Note that
$$
j_{2r+1}=a_r+p+2r-1,\quad
j_{2r+2}=a_r+p+2r.
$$
By Lemma~\ref{lem:skew-toeplitz-minor}, we obtain 
$$
\det\left(\widetilde H_{[2n+2],J}\right)
=
\det\left( H^{(2n+1)}_{\mu}\right)_{[2n+1],
\{p-1,j_3-1,\ldots,j_{2n+1}-1,\,j_{2n+1}\}}
=s_{\lambda/\mu},
$$
where the partition $\la$ equals
\begin{align*}
\left( j_{2n+1}-(2n+1),\, j_{2n+1}-(2n+1),\,\ldots,\, p-2 \right)
=\left((a_n+p-2)^2,\ldots,(a_1+p-2)^2,p-2\right).
\end{align*}
For every fixed homogeneous degree, only finitely many tuples
$(a_1,\ldots,a_n)$ contribute.
Hence both sides stabilize coefficientwise as $N\to\infty$.
Substituting into \eqref{eq:second-finite-expansion} and letting $N\to\infty$ proves \eqref{eq:fully-weighted-augmented-skew-pfaffian}.
\end{proof}

The following is an immediate consequence of Theorem~\ref{thm:augmented-skew-pfaffian}.

\begin{coro}\label{thm:augmented}
Let $\mathbf v=(v_1,v_2,\ldots)$ with $v_1=\cdots=v_{p-1}=0$ and $v_i=t^{i-p}$ for $i\ge p.$
Let $\widetilde H^{(2n+1)}=\widetilde H^{(2n+1)}_\varnothing.$
For every $n\geq 0$ and $p\geq2$, we have
\begin{equation}
\Pf\left(\widetilde H^{(2n+1)}\widetilde\Omega^{(p)}(\mathbf v)
\left(\widetilde H^{(2n+1)}\right)^{\mathsf T}\right)=
\sum_{0\leq a_1\leq\cdots\leq a_n}
s_{\bigl((a_n+p-2)^2,\ldots,(a_1+p-2)^2,p-2\bigr)}
\,t^{n(n-1)+\sum_{r=1}^n a_r}.
\label{eq:augmented-pf}
\end{equation}
\end{coro}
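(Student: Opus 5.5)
This is a direct specialization of Theorem~\ref{thm:augmented-skew-pfaffian}, so the plan is to substitute and then simplify the weight. We take $\mu=\varnothing$. Then $\ell(\mu)=0\le 2n+1$ holds trivially, and the hypotheses $n\ge0$, $p\ge2$ are those of the theorem. This gives $\widetilde H^{(2n+1)}_\varnothing=\widetilde H^{(2n+1)}$, and every skew Schur function $s_{\lambda/\varnothing}$ becomes $s_\lambda$. We choose $\mathbf v$ exactly as in the statement. This is consistent with the definition of $\Omega^{(p)}(\mathbf v)$, because that matrix ignores $v_1,\dots,v_{p-1}$ anyway. Equation~\eqref{eq:fully-weighted-augmented-skew-pfaffian} then reads
\begin{equation*}
\Pf\left(\widetilde H^{(2n+1)}\widetilde\Omega^{(p)}(\mathbf v)\left(\widetilde H^{(2n+1)}\right)^{\mathsf T}\right)
=\sum_{0\le a_1\le\cdots\le a_n} s_{\bigl((a_n+p-2)^2,\ldots,(a_1+p-2)^2,p-2\bigr)}\prod_{r=1}^n v_{a_r+p+2r-2}.
\end{equation*}

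The only remaining step is to evaluate the weight. Every index $a_r+p+2r-2$ satisfies $a_r+p+2r-2\ge p$, since $a_r\ge0$ and $r\ge1$. So we are always in the regime $v_i=t^{i-p}$, and $v_{a_r+p+2r-2}=t^{a_r+2r-2}$. Multiplying over $r$ gives the exponent
\begin{equation*}
\sum_{r=1}^n (a_r+2r-2)=\sum_{r=1}^n a_r + n(n+1)-2n=\sum_{r=1}^n a_r+n(n-1),
\end{equation*}
which is the exponent of $t$ in \eqref{eq:augmented-pf}.

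For $n=0$ the product is empty, and both sides reduce to the single term $s_{(p-2)}$. This is consistent, since the exponent $n(n-1)$ equals $0$. There is no genuine obstacle. The one point to check is that no index falls below $p$, where the zero entries of $\mathbf v$ would kill a term, and the inequality $a_r+p+2r-2\ge p$ rules this out. Convergence is handled coefficientwise exactly as in the proof of Theorem~\ref{thm:augmented-skew-pfaffian}, because the specialization $v_i=t^{i-p}$ does not affect the finiteness of contributions in each degree.
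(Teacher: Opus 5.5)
Your proposal is correct and follows the paper's route: the paper states this as an immediate consequence of Theorem~\ref{thm:augmented-skew-pfaffian} with $\mu=\varnothing$. Your check that every index $a_r+p+2r-2$ is at least $p$, so that $\prod_{r=1}^n v_{a_r+p+2r-2}=t^{\sum_{r=1}^n(a_r+2r-2)}=t^{n(n-1)+\sum_{r=1}^n a_r}$, supplies the only detail the paper leaves implicit.
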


\subsection{Applications}
In this subsection,
we present some applications of our main results.
We first need the following lemma.
Recall that $H^{(n)}=H^{(n)}_\varnothing=\bigl[h_{j-i}\bigr]_{1\le i\le n,\ j\ge1}$.

\begin{lemma}\label{lem:entry}
Given $p\ge1$, let $\Omega^{(p)}(\mathbf v)$ be an infinite skew-symmetric matrix defined in~\eqref{def:Omega} with 
$v_i=0$ for $i<p$  and $v_i=t^{i-p}$ for $i\ge p.$
Let
\begin{equation*}
M^{(n,p)}
=
H^{(n)}\Omega^{(p)}(\mathbf v)
\bigl(H^{(n)}\bigr)^{\mathsf T}.
\end{equation*}
Then $M^{(n,p)}$ is an $n\times n$ skew-symmetric matrix.
For
$1\leq i<j\leq n$, we have
\begin{align}
M^{(n,p)}_{ij}
=\sum_{k\ge 0}
s_{(k+p-i,k+p+1-j)}\,t^k.
\label{eq:general-entry-shifted}
\end{align}
\end{lemma}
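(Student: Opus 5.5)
Skew-symmetry is immediate, since $(H\Omega H^{\mathsf T})^{\mathsf T}=H\Omega^{\mathsf T}H^{\mathsf T}=-H\Omega H^{\mathsf T}$. To get the entries, I will expand the product directly rather than apply the minor summation formula. Let $H=H^{(n)}$, so $H_{ia}=h_{a-i}$. Since $\Omega^{(p)}(\mathbf v)$ has nonzero entries only in the positions $(l,l+1)$ and $(l+1,l)$ with $l\ge p$, we get
\begin{equation*}
M^{(n,p)}_{ij}=\sum_{l\ge p} v_l\bigl(H_{il}H_{j,l+1}-H_{i,l+1}H_{jl}\bigr)
=\sum_{l\ge p} t^{l-p}\bigl(h_{l-i}h_{l+1-j}-h_{l+1-i}h_{l-j}\bigr).
\end{equation*}
Each coefficient of a fixed degree involves only finitely many terms, so this sum makes sense coefficientwise.

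Next I substitute $l=k+p$ with $k\ge0$. The summand becomes $t^k\bigl(h_{a}h_{b}-h_{a+1}h_{b-1}\bigr)$, where $a=k+p-i$ and $b=k+p+1-j$. By the two-row Jacobi--Trudi identity \eqref{eq:two-row-JT}, this $2\times2$ determinant is $s_{(a,b)}$ whenever $a\ge b$. Here $a-b=j-i-1\ge0$ because $i<j$, which gives \eqref{eq:general-entry-shifted}.

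The one point needing care is the convention $s_\lambda=0$ for invalid $\lambda$, which matters when $b\le0$ or $a<0$.
\begin{itemize}
\item If $a<0$, then $b<0$ as well, so $h_a=0$ and $h_{b-1}=0$ and the term vanishes, matching $s_{(a,b)}=0$.
\item If $b=0$, the term is $h_a h_0-h_{a+1}h_{-1}=h_a=s_{(a)}=s_{(a,0)}$, which is consistent.
\item If $b<0$, then $h_b=0$ and $h_{b-1}=0$, so the term vanishes.
\end{itemize}
In the last case I need to confirm that the paper's convention reads $s_{(a,b)}$ with $b<0$ as $0$; this is where the stated convention is used. The argument is therefore a direct entrywise computation, and this boundary bookkeeping is the only subtle step.
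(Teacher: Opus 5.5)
Your proposal is correct and follows the paper's proof essentially verbatim. Both arguments get skew-symmetry from transposition, expand $H^{(n)}\Omega^{(p)}(\mathbf v)\bigl(H^{(n)}\bigr)^{\mathsf T}$ entrywise using the support of $\Omega^{(p)}(\mathbf v)$ on the positions $(l,l+1)$ and $(l+1,l)$, apply the two-row Jacobi--Trudi identity, and reindex by $l=k+p$. Your explicit check of the boundary cases $a<0$, $b=0$, $b<0$ against the convention $s_\lambda=0$ is correct, and it spells out a detail the paper leaves implicit.
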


\begin{proof}
Direct computation gives
$$
\bigl(M^{(n,p)}\bigr)^{\mathsf T}
=
H^{(n)}
\bigl(\Omega^{(p)}(\mathbf v)\bigr)^{\mathsf T}
\bigl(H^{(n)}\bigr)^{\mathsf T}
=
-M^{(n,p)},
$$
since $\Omega^{(p)}(\mathbf v)$ is skew-symmetric.
Thus, $M^{(n,p)}$ is an $n\times n$ skew-symmetric matrix.

By matrix multiplication,
$$
M^{(n,p)}_{ij}
=
\sum_{a,b\geq1}
h_{a-i}\,\left(\Omega^{(p)}(\mathbf v)_{a,b}\right)\, h_{b-j}.
$$
The only nonzero entries of $\Omega^{(p)}(\mathbf v)$ are
$$
\Omega^{(p)}(\mathbf v)_{r,r+1}=t^{r-p},
\quad
\Omega^{(p)}(\mathbf v)_{r+1,r}=-t^{r-p},
\quad r\geq p.
$$
It follows that
$$
M^{(n,p)}_{ij}
=
\sum_{r\geq p}
\left(
h_{r-i}h_{r+1-j}
-
h_{r+1-i}h_{r-j}
\right)\,t^{r-p}.
$$
By the two-row Jacobi--Trudi identity~\eqref{eq:two-row-JT},
$$
h_{r-i}h_{r+1-j}
-
h_{r+1-i}h_{r-j}
=
s_{(r-i,r+1-j)}.
$$
Replacing $r$ by
$k+p$ yields \eqref{eq:general-entry-shifted}.
\end{proof}

Note that  for a $4\times 4$ skew-symmetric matrix $M$, 
	\begin{equation*}
		\Pf(M)=M_{12}M_{34}-M_{13}M_{24}+M_{14}M_{23}.
	\end{equation*}	
For any family of partitions $\lambda(k)$ depending on $k$, we write
\begin{equation*}
S_{\lambda(k)}(t):=\sum_{k\geq 0}s_{\lambda(k)}t^k,
\end{equation*}
and we suppress the argument $t$ whenever no confusion can arise.	
Here all symmetric function generating series identities involving $t$
are interpreted in the formal power-series ring of symmetric functions over $\mathbb Q$.
We use $S_\la^2$ to denote $S_\la S_\la.$
For example, equation~\eqref{eq:general-entry-shifted} can be written as
$$
M^{(n,p)}_{ij}
=S_{(k+p-i,k+p+1-j)}.
$$

The following identity is equivalent to
Chen, Wang and Yang~\cite[Theorem~6.7]{CWY10} after applying
the standard involution $\omega$.

\begin{theorem}\label{thm:base-pf}
    We have 
    \begin{equation}
		t\cdot\left(\Ya\Yc-\Yb^2\right)+\Ya^2
		=\sum_{0\le b\le a}s_{(a,a,b,b)}\,t^{a+b}.
		\label{eq:base-pf}
	\end{equation}
\end{theorem}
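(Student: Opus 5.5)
The plan is to specialize Corollary~\ref{thm:PfM} to $n=2$, so that the right-hand side of \eqref{eq:base-pf} appears directly. We then compute the same $4\times4$ Pfaffian entry by entry using Lemma~\ref{lem:entry}. With $n=2$ and $v_i=t^{i-1}$, Corollary~\ref{thm:PfM} gives
\begin{equation*}
\Pf\left(H^{(4)}\Omega(\mathbf v)\left(H^{(4)}\right)^{\mathsf T}\right)=\sum_{0\le a_1\le a_2} s_{(a_2,a_2,a_1,a_1)}\,t^{2+a_1+a_2}.
\end{equation*}
Renaming $a=a_2$ and $b=a_1$, this is exactly $t^2$ times the right-hand side of \eqref{eq:base-pf}.

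Next, observe that $\Omega(\mathbf v)$ with $v_i=t^{i-1}$ is precisely $\Omega^{(1)}(\mathbf v)$ in the notation of Lemma~\ref{lem:entry} with $p=1$. Hence the matrix above is $M^{(4,1)}$, and its entries are $M_{ij}=S_{(k+1-i,\,k+2-j)}$ for $i<j$. Reading these off and reindexing $k$ gives
\begin{align*}
M_{12}&=S_{(k,k)}, & M_{34}&=S_{(k-2,k-2)}=t^2S_{(k,k)}, & M_{13}&=S_{(k,k-1)}=tS_{(k+1,k)},\\
M_{24}&=S_{(k-1,k-2)}=t^2S_{(k+1,k)}, & M_{14}&=S_{(k,k-2)}=t^2S_{(k+2,k)}, & M_{23}&=S_{(k-1,k-1)}=tS_{(k,k)}.
\end{align*}

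The point needing care is the reindexing, which discards the initial values of $k$ in each sum. Throughout, a symbol $s_{(r-i,r+1-j)}$ means the two-row Jacobi--Trudi determinant $h_{r-i}h_{r+1-j}-h_{r+1-i}h_{r-j}$. The discarded terms are exactly those in which some index is negative. In each such case both products in the determinant contain some $h_m$ with $m<0$, so the term vanishes. For example, $(k,k-1)$ at $k=0$ gives $h_0h_{-1}-h_1h_{-2}=0$, and $(k,k-2)$ at $k=1$ gives $h_1h_{-1}-h_2h_{-2}=0$. Checking this for the first one or two values of $k$ in each of the six entries is routine but is the only real obstacle. In every surviving term the first part is at least the second, so each is a genuine Schur function, consistent with the paper's convention.

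Finally, we apply the $4\times4$ formula $\Pf(M)=M_{12}M_{34}-M_{13}M_{24}+M_{14}M_{23}$:
\begin{equation*}
\Pf\bigl(M^{(4,1)}\bigr)=t^2\Ya^2-t^3\Yb^2+t^3\Yc\Ya=t^2\Bigl(t\bigl(\Ya\Yc-\Yb^2\bigr)+\Ya^2\Bigr).
\end{equation*}
Comparing this with the expression from Corollary~\ref{thm:PfM} and cancelling the common factor $t^2$ in the formal power series ring yields \eqref{eq:base-pf}.
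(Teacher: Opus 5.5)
Your proposal is correct and is essentially the paper's own proof. Like the paper, you take $M^{(4,1)}=H^{(4)}\Omega^{(1)}(H^{(4)})^{\mathsf T}$ with $v_i=t^{i-1}$, read off its six entries from Lemma~\ref{lem:entry}, expand the $4\times 4$ Pfaffian, and compare the result with Corollary~\ref{thm:PfM} at $n=2$. You also check explicitly that the discarded low-$k$ terms vanish under the two-row Jacobi--Trudi convention, which the paper leaves implicit.
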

\begin{proof}
Let $\mathbf v=(v_1,v_2,\ldots)=(1,t,t^2,\ldots)$
and $\Omega^{(1)}=\Omega^{(1)}(\mathbf v)$.
Let $M=M^{(4,1)}=H^{(4)}\Omega^{(1)}\left(H^{(4)}\right)^{\mathsf T}$. 
By Lemma~\ref{lem:entry},
		$$
		M_{ij}=\sum_{k\geq1}
		s_{(k-i,k+1-j)}\,t^{k-1}
		$$
		for $i<j$. Consequently, $M_{12}=\Ya,$ $M_{13}=t\cdot \Yb$,
		\begin{align*}
			M_{14}=t^2\cdot \Yc,\quad
			M_{23}=t\cdot \Ya,\quad
			M_{24}=t^2\cdot \Yb,\quad
			M_{34}=t^2\cdot\Ya.
		\end{align*}
Hence, the Pfaffian $\Pf(M)$ equals 
\begin{align*}
	M_{12}M_{34}-M_{13}M_{24}+M_{14}M_{23}
			=t^3\cdot\left(\Ya\Yc-\Yb^2\right)+t^2\cdot\Ya^2.
		\end{align*}
By Corollary~\ref{thm:PfM},
$$\Pf(M)=\sum_{0\le b\le a}s_{(a,a,b,b)}\,t^{a+b+2}.$$
Equation~\eqref{eq:base-pf} follows by comparing the two expressions of $\Pf(M)$.
\end{proof}

Chen, Wang and Yang~\cite{CWY10} proved the $q$-log-convexity of the Narayana polynomials.
Following their notation, one of the key steps is to prove the Schur positivity of $D(r)=\sum_{k=0}^{r}D(r,k,0,0),$ where
$$
D(r,k,0,0)
=s_{(2^k)}\,s_{(2^{r-k-1})}+s_{(2^{k-1},1^2)}\,s_{(2^{r-k-1})}-s_{(2^{k-1},1)}\,s_{(2^{r-k-1},1)}.
$$
See, for instance,~\cite[Section 3]{CWY10}.
        
\begin{coro}{\rm(\cite[Theorem 3.1]{CWY10})}
For $r\ge 1$, the symmetric function $D(r)$ is Schur positive.
\end{coro}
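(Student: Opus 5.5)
The plan is to read $D(r)$, after applying $\omega$, as a coefficient of $t$ in Theorem~\ref{thm:base-pf}. The right-hand side there is manifestly Schur positive, so Lemma~\ref{SPOMEGA} then finishes the proof.

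\emph{Step 1: apply $\omega$ termwise.} Since $\omega(s_\lambda)=s_{\lambda'}$ and $\omega$ is multiplicative, conjugating the partitions in $D(r,k,0,0)$ gives $(2^k)'=(k,k)$, $(2^{k-1},1^2)'=(k+1,k-1)$, $(2^{k-1},1)'=(k,k-1)$, $(2^{r-k-1})'=(r-k-1,r-k-1)$ and $(2^{r-k-1},1)'=(r-k,r-k-1)$. Hence
\begin{equation*}
\omega(D(r))=\sum_{k=0}^{r}\Bigl(s_{(k,k)}s_{(r-1-k,r-1-k)}+s_{(k+1,k-1)}s_{(r-1-k,r-1-k)}-s_{(k,k-1)}s_{(r-k,r-k-1)}\Bigr).
\end{equation*}

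\emph{Step 2: extract the coefficient of $t^{r-1}$ on the left of \eqref{eq:base-pf}.} In $\Ya^2$ this coefficient is $\sum_{i+j=r-1}s_{(i,i)}s_{(j,j)}$, which matches the first group of terms. In $t(\Ya\Yc-\Yb^2)$ it is
\begin{equation*}
\sum_{i+j=r-2}\bigl(s_{(i,i)}s_{(j+2,j)}-s_{(i+1,i)}s_{(j+1,j)}\bigr).
\end{equation*}
Substituting $j=k-1$ and $i=r-1-k$ turns this into the second and third groups of terms.

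\emph{Main obstacle: the boundary indices.} This is the only delicate point, and it is handled by the convention that $s_\lambda=0$ for invalid $\lambda$. For $k=0$, the shapes $(2^{-1},\dots)$ vanish in the original expression, and $(k,k-1)=(0,-1)$ and $(1,-1)$ are not partitions. For $k=r$, the shapes $(2^{-1})$ vanish and $(r-1-k,\cdot)$ is negative. So the range $0\le k\le r$ matches the ranges $i,j\ge0$ with $i+j=r-1$ or $r-2$ exactly. I would check these endpoint terms explicitly, including the degenerate case $r=1$.

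\emph{Conclusion.} We obtain $\omega(D(r))=[t^{r-1}]$ of the right-hand side of \eqref{eq:base-pf}, which is
\begin{equation*}
\omega(D(r))=\sum_{\substack{0\le b\le a\\ a+b=r-1}} s_{(a,a,b,b)}.
\end{equation*}
This is Schur positive, so $D(r)$ is Schur positive by Lemma~\ref{SPOMEGA}.
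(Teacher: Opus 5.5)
Your proposal is correct and follows essentially the same route as the paper. You apply $\omega$ and use the convolution to identify $\omega(D(r))$ as the coefficient of $t^{r-1}$ in $t\cdot(\Ya\Yc-\Yb^2)+\Ya^2$, then read off $\sum_{a+b=r-1,\,0\le b\le a}s_{(a,a,b,b)}$ from Theorem~\ref{thm:base-pf}; your explicit check of the endpoint terms $k=0$ and $k=r$ via the invalid-partition convention is a small detail the paper leaves implicit.
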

\begin{proof}
By Lemma~\ref{SPOMEGA},
it suffices to prove the Schur positivity of the standard involution
$$
\omega(D(r))=\sum_{k=0}^{r}\left(
s_{(k,k)}\,s_{(r-k-1,r-k-1)}+s_{(k+1,k-1)}\,s_{(r-k-1,r-k-1)}-s_{(k,k-1)}\, s_{(r-k,r-k-1)}
\right).
$$
By the convolution formula for the coefficients of formal power series,
\[
S_{(k,k)}^2=\sum_r t^r\left(\sum_{k=0}^{r}s_{(k,k)}s_{(r-k,r-k)}\right) 
=\sum_r t^{r-1}\left(\sum_{k=0}^{r-1}s_{(k,k)}s_{(r-k-1,r-k-1)}\right)
\]
Applying the same  argument to the other two terms shows
that $\omega(D(r))$ equals the coefficient of $t^{r-1}$ in 
$$
t\cdot \left(\Ya\Yc-\Yb^2\right)+\Ya^2.
$$
By Theorem~\ref{thm:base-pf},
$$
t\cdot\left(\Ya\Yc-\Yb^2\right)+\Ya^2=\sum_{0\le b \le a}s_{(a,a,b,b)}\,t^{a+b}.
$$
		Taking $a=r-q-1$ and $b=q$, we have 
		$$
		\omega(D(r))=\sum_{0\le q \le r-q-1}s_{(r-q-1,r-q-1,q,q)}=\sum_{q=0}^{\lfloor{\frac{r-1}{2}\rfloor}}s_{(r-q-1,r-q-1,q,q)},
		$$
		and the Schur positivity of $D(r)$ follows.
	    \end{proof}

As a special case, 
setting $n=2$ in Corollary~\ref{thm:augmented} yields the Schur expansion of two further symmetric-function generating series.
These identities are precisely the forms
needed in the proof of the Baxter Schur-positivity theorem in
Section~5.
\begin{theorem}\label{thm:base-pf22}
We have
    \begin{align}
		&t\cdot\left(\Yb\Yc-\Ya\Zf\right)-\Ya\Yb\nonumber\\
        &=\sum_{0\le b\le a}
		s_{(a+1,a+1,b+1,b+1,1)}\,t^{a+b+2}-e_1 \sum_{0\le b\le a}s_{(a,a,b,b)}\,t^{a+b}
		\label{eq:base-pf2}
	\end{align}
and 
\begin{equation}\label{jr3}
		\Ya\Zg-\Zc^2+S_{(k,k,1)}S_{(k,k,1,1,1)}=\sum_{0\le b\le a}s_{(a+2,a+2,b+2,b+2,2)}\,t^{a+b+3}.
	\end{equation}
\end{theorem}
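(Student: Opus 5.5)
Both identities come from evaluating the left-hand side of Corollary~\ref{thm:augmented} with $n=2$ in two ways. The matrix is $6\times 6$. The Pfaffian expansion in Corollary~\ref{thm:augmented} already gives the right-hand sides, so the whole task is to compute the Pfaffian directly. I take $p=3$ for \eqref{eq:base-pf2} and $p=4$ for \eqref{jr3}. Then $p-2=1$, resp.\ $2$, and $t^{n(n-1)+a+b}=t^{a+b+2}$ produces the stated right-hand sides $s_{((a+1)^2,(b+1)^2,1)}t^{a+b+2}$ and $s_{((a+2)^2,(b+2)^2,2)}t^{a+b+2}$. The extra overall power of $t$ in \eqref{jr3} should be absorbed by the normalization of the left side.

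\emph{Direct evaluation.} The product matrix has the block form displayed before Theorem~\ref{thm:augmented-skew-pfaffian}. Its first row is $w_{p-1}(H^{(5)})^{\mathsf T}$, whose $(1,i+1)$ entry is $h_{p-1-i}$. For $p=3$ this row is $(0,h_1,h_0,0,0,0)$; for $p=4$ it is $(0,h_2,h_1,h_0,0,0)$. The lower $5\times5$ block is $M^{(5,p)}$, and by Lemma~\ref{lem:entry} its entries are $M^{(5,p)}_{ij}=S_{(k+p-i,k+p+1-j)}$. I expand along the first row using Lemma~\ref{lem-pf}(ii). This writes the Pfaffian as a signed combination, with coefficients $h_1=e_1$, $h_0=1$ (and $h_2$ when $p=4$), of $4\times4$ Pfaffians of principal submatrices of $M^{(5,p)}$ obtained by deleting one index. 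Each of these is evaluated by $M_{12}M_{34}-M_{13}M_{24}+M_{14}M_{23}$.

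\emph{Normalizing the series.} Before comparing, I rewrite every entry in a standard form. Whenever the second part is negative for small $k$, I shift the index, e.g.\ $S_{(k-1,k-1)}=t\,S_{(k,k)}$ and $S_{(k+1,k-1)}=t\,S_{(k+2,k)}$, using the convention $s_\lambda=0$ for invalid $\lambda$.
\begin{itemize}
\item For $p=3$, the submatrix on indices $\{2,3,4,5\}$ reproduces the Narayana combination of Theorem~\ref{thm:base-pf}, multiplied by $e_1$. By \eqref{eq:base-pf} this term equals $e_1\sum s_{(a,a,b,b)}t^{a+b}$ up to a power of $t$.
\item For $p=3$, the submatrix on indices $\{1,3,4,5\}$ produces $t\,(\Yb\Yc-\Ya\Zf)-\Ya\Yb$ up to a power of $t$.
\end{itemize}
Equating with Corollary~\ref{thm:augmented} and dividing by the common power of $t$ then gives \eqref{eq:base-pf2}.

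\emph{The case $p=4$ and the main obstacle.} The direct expansion now produces three $4\times4$ Pfaffians of two-row series, with coefficients $h_2,h_1,h_0$. The left side of \eqref{jr3}, however, is written in three- and four-row series $\Zc$, $\Zg$, $S_{(k,k,1)}$, $S_{(k,k,1,1,1)}$. Matching the two forms is the hard step. I would first try to express the three-row series through Pieri's rule (Theorem~\ref{h-e}). For instance $h_1\Ya=\Yb+\Za$ and $h_2\Ya=\Yc+\Zd+\Zb$, each at the level of series. Similar Jacobi--Trudi expansions would handle $\Zc$ and the other longer shapes as polynomials in $h_1,h_2$ and the two-row series. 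If that becomes unwieldy, I would take the alternative route of applying $\omega$, by Lemma~\ref{SPOMEGA}. Under $\omega$ the rectangular and near-rectangular shapes become two-column shapes, and the identity can be checked through the dual Jacobi--Trudi determinants in $e$'s. The remaining work is bookkeeping: expand the products and cancel. The appendices presumably contain exactly such auxiliary identities.
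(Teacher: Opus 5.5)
Your route is the paper's: Corollary~\ref{thm:augmented} with $n=2$ and $p=3,4$, expansion along the first row by Lemma~\ref{lem-pf}(ii), then comparison of the two expressions. As written, though, the proposal misstates the $p=3$ computation and does not prove the $p=4$ identity.

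\emph{The case $p=3$.} Your normalization rule only covers shifts that kill invalid partitions. By Lemma~\ref{lem:entry}, for small $i,j$ the entries of $M^{(5,p)}$ are \emph{tails} of the standard series, so initial terms must be subtracted. For $p=3$ one has $M_{23}=(\Ya-1)/t$ and $M_{13}=(\Yb-e_1)/t$. Consequently both of your bullet claims are wrong as stated. In fact $\Pf(M_{\{2,3,4,5\}})=\Ya^2+t(\Ya\Yc-\Yb^2)-\Ya$ and $\Pf(M_{\{1,3,4,5\}})=\Ya\Yb-e_1\Ya+t(\Ya\Zf-\Yb\Yc)$, with no extra power of $t$. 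Identity \eqref{eq:base-pf2} comes out only because the boundary contributions $\mp e_1\Ya$ cancel in $e_1\Pf(M_{\{2,3,4,5\}})-\Pf(M_{\{1,3,4,5\}})$, and you need to show that cancellation.

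\emph{The case $p=4$.} This is the genuine gap: you stop where the actual work begins, and ``bookkeeping'' plus a guess about the appendices is not an argument. What must be proved is \eqref{eq:base-pf3}, namely $h_2\Pf(M_{\{2,3,4,5\}})-e_1\Pf(M_{\{1,3,4,5\}})+\Pf(M_{\{1,2,4,5\}})=\bigl(\Ya\Zg-\Zc^2+\Za S_{(k,k,1,1,1)}\bigr)/t$. Call the left side $L$. Proving this needs three ingredients you do not supply.

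(a) The truncated entries, e.g.\ $M_{12}=(\Ya-1-e_2t-(e_2^2-e_1e_3)t^2)/t^3$, $M_{13}=(\Yb-e_1-(e_1e_2-e_3)t)/t^2$ and $M_{14}=(\Yc-h_2)/t$. These corrections are what produce the factor $1/t$ that you attribute to ``normalization''. If you drop them, the coefficient of $t^{-3}$ in $L$ becomes $1$ (from $M_{12}M_{45}$), whereas $\Pf(\widetilde M)$ is a power series.

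(b) Expressions for the longer shapes in terms of $\Ya,\Yb,\Yc,\Zf$ with symmetric-function coefficients:
$\Za=e_1\Ya-\Yb$, $\Zc=e_2\Ya-e_1\Yb+\Yc$, $S_{(k,k,1,1,1)}=e_3\Ya-e_2\Yb+e_1\Yc-\Zf$, and the hardest one, $\Zg=(\Ya-1)/t^2+(h_2\Ya-e_1\Yb)/t+e_2\Yc-(e_1h_2-h_3)\Yb+(h_2^2-e_1h_3)\Ya$. The last comes from Pieri expansions of $s_{(2,1)}s_{(k+1,k)}$ and $s_{(2,2)}s_{(k,k)}$, so degree-three functions enter, not only $h_1,h_2$; see Lemmas~\ref{proof_j2} and~\ref{lem-eq19}.

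(c) The final collapse. Substituting (a) and (b) gives $tL-\Ya\Zg+\Zc^2=(\Yb-e_1\Ya)(\Zf-e_3\Ya+e_2\Yb-e_1\Yc)$. By (b) this equals $\Za S_{(k,k,1,1,1)}$, which yields \eqref{eq:base-pf3}.

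Passing to $\omega$ does not shortcut any of these steps.
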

 
\begin{proof}[Proof of \eqref{eq:base-pf2}]
Let $\mathbf v=(v_1,v_2,\ldots)$ with $v_1=v_{2}=0$ and $v_i=t^{i-3}$ for $i\ge 3.$
Let $\Omega^{(3)}=\Omega^{(3)}(\mathbf v)$.
Take $n=2, p=3$ in \eqref{eq:augmented-pf} and let $\widetilde M=\widetilde H^{(5)}\widetilde\Omega^{(3)}
\bigl(\widetilde H^{(5)}\bigr)^{\mathsf T}$.
Then 
\begin{align*}
\widetilde M&=\begin{bmatrix}
1&0\\
0&H^{(5)}
\end{bmatrix}
\begin{bmatrix}
0&w_{2}\\
-w_{2}^{\mathsf T}&\Omega^{(3)}
\end{bmatrix}
\begin{bmatrix}
1&0\\
0&(H^{(5)})^{\mathsf T}
\end{bmatrix}
=\begin{bmatrix}
			0&w_2(H^{(5)})^{\mathsf T}\\
			-H^{(5)} w_2^{\mathsf T}& M
		\end{bmatrix},
\end{align*}
where $M=H^{(5)}\Omega^{(3)} (H^{(5)})^{\mathsf T}$.
Note that $w_2(H^{(5)})^{\mathsf T}=(h_1,h_0,0,\ldots)=(e_1,1,0,\ldots).$
Expanding the Pfaffian along the first row by Lemma~\ref{lem-pf}(ii), we obtain 
\begin{align*}
	\Pf(\widetilde{M})
		&=
			e_1\Pf
			\left(\widetilde{M}_{\{3,4,5,6\},\{3,4,5,6\}}\right)
			-
			\Pf
			\left(\widetilde{M}_{\{2,4,5,6\},\{2,4,5,6\}}\right)\nonumber\\
			&=
			e_1\Pf\left(M_{\{2,3,4,5\}, \{2,3,4,5\}}\right)
			-
			\Pf\left(M_{\{1,3,4,5\},\{1,3,4,5\}}\right).
\end{align*}

By Lemma~\ref{lem:entry}, for $i<j$, 
		\begin{align*}
			M_{ij}
			=\sum_{k\geq3}
			s_{(k-i,k+1-j)}t^{k-3}.
\end{align*}
Consequently, $M_{13}=\sum_{k\geq3}
		s_{(k-1,k-2)}t^{k-3}=\frac{\Yb-e_1}{t}.$
		Similarly, we have
		\begin{align*}
			&M_{14}=\Yc,\quad M_{15}=t\Zf,\quad M_{23}=\frac{\Ya-1}{t}, \quad
			M_{24}=\Yb, \\
			&M_{25}=t\Yc,\quad
			M_{34}=\Ya,\quad
			M_{35}=t\Yb,\quad
			M_{45}=t\Ya.
		\end{align*}
Hence,
		\begin{align*}
			\Pf\left(
			M_{\{2,3,4,5\},\{2,3,4,5\}}
			\right)
			&=M_{23}M_{45}-M_{24}M_{35}+M_{25}M_{34}\\
			&=\frac{\Ya-1}{t}\cdot t\cdot \Ya-\Yb\cdot t\cdot \Yb+t\cdot \Yc\cdot \Ya\\
			&=\Ya^2+t\cdot\left(\Ya \Yc-\Yb^2\right)-\Ya.
		\end{align*}
By~\eqref{eq:base-pf}, $\Pf\left(
			M_{\{2,3,4,5\},\{2,3,4,5\}}
			\right)=\sum_{0\le b\le a}s_{(a,a,b,b)}t^{a+b}-\Ya$.
On the other hand,
		\begin{align*}
			\Pf\left(
			M_{\{1,3,4,5\},\{1,3,4,5\}}
			\right)
			&=M_{13}M_{45}-M_{14}M_{35}+M_{15}M_{34}\\
			&=\frac{\Yb-e_1}{t}\cdot t\cdot \Ya-\Yc\cdot t\cdot \Yb+t\cdot\Zf\cdot \Ya\\
			&=\Ya \Yb-e_1\Ya+t\cdot\left(\Ya\Zf-\Yb\Yc\right).
		\end{align*}
Therefore,
		\begin{align*}
			\Pf(\widetilde M)=e_1 \sum_{0\le b\le a}s_{(a,a,b,b)}\,t^{a+b}-\Ya\Yb+t\cdot\left(\Yb\Yc-\Ya\Zf\right).
		\end{align*}
By Corollary~\ref{thm:augmented}, 
$$
\Pf(\widetilde M)=\sum_{0\le b\le a}
s_{(a+1,a+1,b+1,b+1,1)}
\,t^{a+b+2}.$$
Comparing the two expressions of $\Pf(\widetilde M)$ proves \eqref{eq:base-pf2}.
\end{proof}
\begin{proof}[Proof of \eqref{jr3}]
Let $\mathbf v=(v_1,v_2,\ldots)$ with $v_1=v_{2}=v_3=0$ and $v_i=t^{i-4}$ for $i\ge 4.$
Let $\Omega^{(4)}=\Omega^{(4)}(\mathbf v)$.
Take $n=2,\, p=4$ in \eqref{eq:augmented-pf} and let $\widetilde M=\widetilde H^{(5)}\widetilde\Omega^{(4)}
\bigl(\widetilde H^{(5)}\bigr)^{\mathsf T}$.
Then 
\begin{align*}
\widetilde M&=\begin{bmatrix}
1&0\\
0&H^{(5)}
\end{bmatrix}
\begin{bmatrix}
0&w_{3}\\
-w_{3}^{\mathsf T}&\Omega^{(4)}
\end{bmatrix}
\begin{bmatrix}
1&0\\
0&(H^{(5)})^{\mathsf T}
\end{bmatrix}\\
&=\begin{bmatrix}
			0&w_3(H^{(5)})^{\mathsf T}\\
			-H^{(5)} w_3^{\mathsf T}& H^{(5)}\Omega^{(4)} (H^{(5)})^{\mathsf T}
            \end{bmatrix}
            =\begin{bmatrix}
			0&w_3(H^{(5)})^{\mathsf T}\\
			-H^{(5)} w_3^{\mathsf T}& M
		\end{bmatrix},
\end{align*}
where $M=H^{(5)}\Omega^{(4)} (H^{(5)})^{\mathsf T}$.
Note that $w_3(H^{(5)})^{\mathsf T}=(h_2,h_1,h_0,0,0)=(h_2,e_1,1,0,0).$
Expanding the Pfaffian along the first row by Lemma~\ref{lem-pf}(ii),  we obtain 
		\begin{align*}
			\Pf(\widetilde M)
			&=
			h_2\Pf
			\left(\widetilde M_{\{3,4,5,6\},\{3,4,5,6\}}\right)
			-
			e_1\Pf
			\left(\widetilde M_{\{2,4,5,6\},\{2,4,5,6\}}\right)
			+\Pf
			\left(\widetilde M_{\{2,3,5,6\},\{2,3,5,6\}}\right)\\
			&=
			h_2\Pf\left(M_{\{2,3,4,5\}, \{2,3,4,5\}}\right)
			-
			e_1\Pf\left(M_{\{1,3,4,5\},\{1,3,4,5\}}\right)
            +\Pf\left(M_{\{1,2,4,5\},\{1,2,4,5\}}\right).
		\end{align*}
        
By Lemma~\ref{lem:entry}, for $i<j$, we have
		\begin{align*}
			M_{ij}
			=\sum_{k\geq4}
			s_{(k-i,k+1-j)}t^{k-4}.
		\end{align*}
		Since
		$
		\Ya=1+e_2t+(e_2^2-e_1e_3)t^2+\sum_{k\ge3}s_{(k,k)}t^k,
		$
		we have 
		$$
		M_{12}=\sum_{k\geq4}
		s_{(k-1,k-1)}t^{k-4}=\frac{\Ya-1-e_2t-(e_2^2-e_1e_3)t^2}{t^3},
		$$
		Similarly, 
		\begin{align*}
			&M_{13}=\frac{\Yb-e_1-(e_1e_2-e_3)t}{t^2},\quad
			M_{14}=\frac{\Yc-h_2}{t},\quad
			M_{15}=\Zf,\\
			&M_{23}=\frac{\Ya-1-e_2t}{t^2},\quad
			M_{24}=\frac{\Yb-e_1}{t},\quad
			M_{25}=\Yc,\\
			&M_{34}=\frac{\Ya-1}{t},\quad
			M_{35}=\Yb,\quad
			M_{45}=\Ya.
		\end{align*}
We claim that 
\begin{align}
&h_2\Pf\left(M_{\{2,3,4,5\}, \{2,3,4,5\}}\right)
-e_1\Pf\left(M_{\{1,3,4,5\},\{1,3,4,5\}}\right)
+\Pf\left(M_{\{1,2,4,5\},\{1,2,4,5\}}\right)\nonumber\\
=&\frac{\Ya\Zg-\Zc^2+S_{(k,k,1)}S_{(k,k,1,1,1)}}{t}.
\label{eq:base-pf3}
\end{align}
For readability, we defer the proof of \eqref{eq:base-pf3} to Appendix B.
Therefore, 
		\begin{equation*}
			\Pf(\widetilde M)=\frac{\Ya\Zg-\Zc^2+S_{(k,k,1)}S_{(k,k,1,1,1)}}{t}.
		\end{equation*}
By Corollary~\ref{thm:augmented}, 
$$
\Pf(\widetilde M)=\sum_{0\le b \le a}
s_{(a+2,a+2,b+2,b+2,2)}
\,t^{a+b+2}.$$
Comparing the two expressions of $\Pf(\widetilde M)$ proves \eqref{jr3}.
\end{proof}

\subsection{Vandermonde--Pfaffian Schur expansions}
In this subsection, we show that Corollary~\ref{thm:PfM} can be obtained by the earlier Schur--Pfaffian formulas of Ishikawa, Okada and Wakayama~\cite{IW96}.
They combined the Pfaffian formula~\eqref{pffor} with Vandermonde matrices.
For an arbitrary
skew-symmetric matrix
$
A=[A_{ij}]_{i,j\geq 1}
$
and a Vandermonde matrix $V=[x_i^{j-1}]_{1\le i\le 2n,\, j\ge 1}$,
the formula in~\cite[Theorem~2.3]{IW96} states that
\begin{equation}\label{IW2}
\sum_{\ell(\lambda)\leq 2n}
\operatorname{Pf}\bigl(A_{J(\lambda),J(\lambda)}\bigr)
s_{\lambda}
=
\frac{1}{\prod_{1\le r<s\le 2n}(x_s-x_r)}
\operatorname{Pf}\left(
VAV^{\mathsf T}
\right),
\end{equation}
where $\la_1\ge \la_2\ge \dots\ge \la_{2n}\ge 0$ and
\begin{equation*}
J(\lambda)
=\{\la_{2n}+1,\la_{2n-1}+2,\ldots,\la_2+2n-1,\la_1+2n\}_<.
\end{equation*}
Take
$
A=\Omega(\mathbf v)
$ for 
$\mathbf v=(1,t,t^2,\ldots).
$
Then $A_{i,i+1}=t^{i-1}$ and $A_{i+1,i}=-t^{i-1},$ with $i\ge 1$
and all other entries equal to $0$.
By the definition of the Pfaffian,
$\Pf\bigl(A_{J(\lambda),J(\lambda)}\bigr)\ne 0$ if and only if
$$
(\la_{2n}+1)+1=\la_{2n-1}+2,\,(\la_{2n-2}+3)+1=\la_{2n-3}+4,\,(\la_{2n-4}+5)+1=\la_{2n-5}+6,\ldots.
$$
Equivalently, $\la_{2n-2k+2}=\la_{2n-2k+1}$ for all $1\le k\le n$.
In this case, 
$$
\Pf\bigl(A_{J(\lambda),J(\lambda)}\bigr)=t^{\sum_{k=1}^n \left((\lambda_{2n-2k+2}+2k-1)-1\right)}
=t^{n(n-1)+\sum_{k=1}^n\la_{2n-2k+2}}.
$$
Let $a_k=\la_{2n-2k+2}.$ Then $0\le a_1\le a_2\le \dots\le a_n$
and the left-hand side of~\eqref{IW2} equals
\begin{equation}\label{eq:IW1}
\sum_{0\leq a_1\leq\cdots\leq a_n}
s_{(a^2_n,a^2_{n-1},\ldots,a^2_1)}\,
t^{n(n-1)+\sum_{k=1}^n a_k}.
\end{equation}

Define the $2n\times 2n$ matrix
$
C=[C_{ij}]_{1\le i,j\le 2n}$
with 
$$
C_{ij}=\frac{x_j^{2n-i}}{\prod_{\substack{1\le s\le 2n\\ s\ne j}}
(x_j-x_s)},
$$
and define $V=[x_i^{j-1}]_{1\le i\le 2n, \,j\ge 1}$. 
Recall that $H^{(2n)}=\bigl[h_{j-i}\bigr]_{1\le i\le 2n,\, j\ge1}$.
The following result gives the relation among $H^{(2n)}$, $C$, and $V$.
\begin{proposition}
After specializing $X$ to the finite alphabet $\{x_1,x_2,\ldots,x_{2n}\}$, we have
	\begin{equation*}
	H^{(2n)}=CV.
	\end{equation*}
	Moreover,
\begin{equation*}
\det C=\frac{1}{\prod_{1\le r<s\le 2n}(x_s-x_r)}.
\end{equation*}
\end{proposition}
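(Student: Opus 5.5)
We compare the two sides entrywise. Fix $1\le i\le 2n$ and $j\ge1$. The $(i,j)$ entry of $CV$ is
\[
(CV)_{ij}=\sum_{m=1}^{2n}\frac{x_m^{2n-i}\,x_m^{j-1}}{\prod_{s\ne m}(x_m-x_s)}
=\sum_{m=1}^{2n}\frac{x_m^{2n-i+j-1}}{\prod_{s\ne m}(x_m-x_s)}.
\]
So the first claim reduces to the classical Lagrange interpolation identity. For the alphabet $\{x_1,\dots,x_N\}$ and every integer $r\ge0$,
\[
\sum_{m=1}^{N}\frac{x_m^{r}}{\prod_{s\ne m}(x_m-x_s)}=h_{r-N+1}(x_1,\dots,x_N),
\]
with the convention $h_k=0$ for $k<0$. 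With $N=2n$ and $r=2n-i+j-1\ge0$ (since $i\le 2n$, $j\ge1$) this gives $h_{j-i}$, which is exactly the $(i,j)$ entry of $H^{(2n)}$.

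I would prove the interpolation identity by the generating function argument. Consider the rational function
\[
\sum_{m}\frac{1}{(1-x_m z)\prod_{s\ne m}(x_m-x_s)}\cdot x_m^{N-1}.
\]
By partial fractions it equals $\prod_{s=1}^N(1-x_sz)^{-1}=\sum_k h_k z^k$. Comparing coefficients of $z^k$ gives $\sum_m x_m^{N-1+k}/\prod_{s\ne m}(x_m-x_s)=h_k$ for $k\ge0$.

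For $0\le r\le N-2$ the sum vanishes. This is the divided-difference fact that the $(N-1)$-st divided difference of a polynomial of degree less than $N-1$ is zero. Equivalently, it is the statement that the leading coefficient of the interpolating polynomial of $z^r$ through $N$ points is $0$. The partial fraction step is the only point needing care: one checks the residue of $z^{-1}\cdot$(product) at $z=1/x_m$, or simply verifies that both sides have the same poles and residues and the same value at $z=0$. This is where I expect the main (though routine) obstacle, namely getting the exponent bookkeeping right.

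For the determinant, factor $C=V_0 D$. Here $D=\operatorname{diag}\bigl(1/\prod_{s\ne j}(x_j-x_s)\bigr)_{j}$ and $V_0=[x_j^{2n-i}]_{i,j}$ is a Vandermonde matrix with rows in decreasing powers.

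The determinant of $V_0$ is $\det[x_j^{2n-i}]=\prod_{r<s}(x_r-x_s)$. The product of the diagonal of $D$ is
\[
\prod_j\prod_{s\ne j}(x_j-x_s)=\prod_{r<s}(x_r-x_s)(x_s-x_r)=(-1)^{\binom{2n}{2}}\prod_{r<s}(x_s-x_r)^2.
\]
Also $\prod_{r<s}(x_r-x_s)=(-1)^{\binom{2n}{2}}\prod_{r<s}(x_s-x_r)$. The signs therefore cancel, giving $\det C=1/\prod_{r<s}(x_s-x_r)$.

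Alternatively, take determinants of the leading $2n\times 2n$ block of $H^{(2n)}=CV$. That block is upper unitriangular, so its determinant is $1$. Hence $\det C\cdot\det[x_i^{j-1}]_{i,j\le 2n}=1$, which gives the same result immediately once the first claim is established. I would present this second argument as the cleaner route.
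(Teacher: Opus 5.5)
Your proof is correct and follows the paper's route. The paper only invokes Sylvester's identity $h_m=\sum_{r}x_r^{m+2n-1}/\prod_{s\ne r}(x_r-x_s)$ for $m\ge 0$ and leaves the rest to the reader. You prove that identity, also handle the entries with $j<i$ (where the sum must vanish, which the identity as cited does not state), and supply the determinant computation; your unitriangular-leading-block argument is the cleanest way to do the latter.

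One small correction to the partial-fraction check: equal poles and residues only force the difference of the two sides to be a polynomial. What makes that polynomial zero is that both sides vanish as $z\to\infty$; agreement at $z=0$ does not suffice.
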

\begin{proof}
The proof follows from the classical identity of Sylvester (see~\cite[(1.1)]{Bha99})
\begin{equation*}
	h_m
	=
	\sum_{r=1}^{2n}
	\frac{x_r^{m+2n-1}}
	{\displaystyle\prod_{\substack{1\le s\le 2n\\ s\ne r}}
		(x_r-x_s)},
\end{equation*}
for $m\ge0$, together with some elementary linear-algebraic manipulations. 
We will omit the proof here and leave it to the reader.
\end{proof}

For any $2n\times 2n$ matrix $H$ and any $2n\times 2n$ skew-symmetric matrix $\Omega$, Lemma~\ref{pfcomp} reduces to
$
\Pf(H\Omega H^{\mathsf T})
=
\det(H)\Pf(\Omega)$.
The right-hand side of~\eqref{IW2} equals
\begin{equation}\label{eq:IW2}
\frac{1}{\prod_{1\le r<s\le 2n}(x_s-x_r)}
\Pf\left(
VAV^{\mathsf T}
\right)=\det(C)\Pf(VAV^{\mathsf T})=\Pf\left(
	H^{(2n)}A(H^{(2n)})^{\mathsf T}
	\right).
\end{equation}
Combining~\eqref{eq:IW1} and~\eqref{eq:IW2} yields the Schur expansion that appears in Corollary~\ref{thm:PfM}.

\section{Refined Baxter numbers and $q$-refined Baxter numbers}
In this section, we realize the refined and $q$-refined Baxter
numbers as principal specializations.
Recall that \begin{equation*}
	B(n,k)=\frac{\binom{n}{k}\binom{n+1}{k}\binom{n+2}{k}}
	{\binom{k+1}{k}\binom{k+2}{k}}, \quad 0\le k\le n.
\end{equation*}
Motivated by the definition of shifted $q$-binomial coefficients and $q$-Narayana numbers~\cite{Bra04,FH85}, we define the $q$-refined Baxter numbers as 
\begin{align*}
	B_q(n,k)=q^{\frac{3k(k-1)}{2}} \frac{\begin{bmatrix} n \\ k \end{bmatrix}_q \begin{bmatrix} n + 1 \\ k \end{bmatrix}_q \begin{bmatrix} n + 2 \\ k \end{bmatrix}_q}{\begin{bmatrix} k + 1 \\ k \end{bmatrix}_q \begin{bmatrix} k + 2 \\ k \end{bmatrix}_q}, 
\end{align*}
where
\begin{equation*}
	[k]_q \coloneqq \frac{1-q^k}{1-q}, \qquad [k]_q!\coloneqq [1]_q[2]_q\cdots[k]_q,\qquad \begin{bmatrix} n \\ k \end{bmatrix}_q\coloneqq \frac{[n]_q!}{[k]_q![n-k]_q!}.
\end{equation*}
For $n\geq 0$ and $k\in\mathbb Z$, we adopt the convention
\[
B(n,k)=B_q(n,k)=0
\quad\text{unless}\quad 0\leq k\leq n.
\]
We remark that Cigler~\cite{Cig21} defined a different $q$-analogue of the refined Baxter numbers.

For a Schur function $s_\lambda$,
its \emph{principal specialization} is defined by 
$$
\ps_n(s_\lambda)=s_\lambda(1,q,\ldots,q^{n-1})\quad \textrm{and}\quad \ps^1_n(s_\lambda)=s_\lambda(1^n).
$$ In particular,
\[
\operatorname{ps}_0^1(s_\lambda)
=
\begin{cases}
1, & \lambda=\varnothing,\\
0, & \lambda\neq\varnothing.
\end{cases}
\]
The following \emph{hook-content formula} was first stated explicitly by Stanley~\cite{Sta71}. 

\begin{lemma}{\rm \cite[Theorem 7.21.2, Corollary 7.21.4]{Sta24}}
	\label{lem:hook-content}
	Let $\lambda$ be a partition with at most $n$ parts.
Then
	\begin{equation*}
		\ps_n(s_\lambda)
		=q^{\sum_{i\geq 1}(i-1)\lambda_i}
		\prod_{(i,j)\in\lambda}
		\frac{1-q^{n+j-i}}{1-q^{h(i,j)}}
	\end{equation*}
and
	\begin{equation*}
		\ps_n^1(s_\lambda)
		=\prod_{(i,j)\in\lambda}
		\frac{n+j-i}{h(i,j)}, 
	\end{equation*}
where $h(i,j)$ is the hook length of $\lambda$ at $(i,j)$.
\end{lemma}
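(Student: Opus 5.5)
The plan is to derive the formula from Cauchy's bialternant definition $s_\lambda(x_1,\ldots,x_n)=\det\bigl(x_i^{\lambda_j+n-j}\bigr)/\det\bigl(x_i^{n-j}\bigr)$. This is equivalent to the tableau definition and to the Jacobi--Trudi identity~\eqref{eq:Jacobi-Trudi}; it is the one classical fact I take as given. The second formula, for $\ps^1_n$, then follows from the first by letting $q\to1$, since each factor $(1-q^{a})/(1-q^{b})$ tends to $a/b$.

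\textbf{Step 1: reduce to a ratio of Vandermonde determinants.} Put $l_j=\lambda_j+n-j$, so that $l_1>l_2>\cdots>l_n\ge0$. Specializing $x_i=q^{i-1}$ turns the numerator into $\det\bigl((q^{l_j})^{i-1}\bigr)$, a Vandermonde determinant in the values $q^{l_1},\ldots,q^{l_n}$. The denominator becomes the Vandermonde determinant in $q^{n-1},\ldots,q^0$. Since the same ordering is used in both, signs cancel and
\[
\ps_n(s_\lambda)=\prod_{1\le i<j\le n}\frac{q^{l_i}-q^{l_j}}{q^{n-i}-q^{n-j}}.
\]

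\textbf{Step 2: extract the power of $q$.} From the $(i,j)$ factor in the numerator pull out $q^{l_j}$, and from the one in the denominator pull out $q^{n-j}$. The exponent left over is $\sum_{i<j}\lambda_j=\sum_j (j-1)\lambda_j$, which is exactly the prefactor in the statement. What remains is
\[
\prod_{i<j}\frac{1-q^{l_i-l_j}}{1-q^{j-i}}=\prod_{i<j}\frac{[l_i-l_j]_q}{[j-i]_q}.
\]

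\textbf{Step 3: rewrite in terms of contents and hooks.} Two combinatorial identities finish the proof.

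First, for the contents: the cells of row $i$ have $n+c(i,j)$ running over $n-i+1,\ldots,l_i$. Hence
\[
\prod_{(i,j)\in\lambda}[n+j-i]_q=\prod_{i}\frac{[l_i]_q!}{[n-i]_q!},
\]
and $\prod_i [n-i]_q!=\prod_{i<j}[j-i]_q$.

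Second, for the hooks:
\[
\prod_{(i,j)\in\lambda}[h(i,j)]_q=\frac{\prod_i [l_i]_q!}{\prod_{i<j}[l_i-l_j]_q}.
\]
Dividing the first identity by the second gives exactly the expression in Step 2, which proves the formula.

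\textbf{Main obstacle.} The hook identity is the step I expect to require real care. I would prove it row by row, showing that the hook lengths in row $i$ are precisely $\{1,\ldots,l_i\}\setminus\{l_i-l_j: j>i\}$. The argument is the standard one: as one moves left along row $i$, each hook length increases by $1$, except that it jumps by $2$ each time a column boundary of a lower row $j$ is crossed. The skipped values are exactly $l_i-l_j$.
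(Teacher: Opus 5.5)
Your argument is correct. The paper gives no proof of this lemma and simply imports it from Stanley (EC2, Theorem 7.21.2 and Corollary 7.21.4). Your route is essentially the standard proof given there: specialize the bialternant to a ratio of Vandermonde determinants, extract $q^{\sum_i (i-1)\lambda_i}$, and convert using the row-wise fact that the hook lengths in row $i$ are $\{1,\dots,l_i\}\setminus\{l_i-l_j: j>i\}$.

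When you write out that hook-length fact, tighten the sketch in three places:
\begin{itemize}
\item Moving left from column $j+1$ to column $j$ raises the hook length by $1+\lambda'_j-\lambda'_{j+1}$, that is, by one extra unit for each lower row ending at column $j$, not always by exactly $2$.
\item The rows $j>i$ with $\lambda_j=\lambda_i$ account for the skipped values $1,\dots,h(i,\lambda_i)-1$.
\item The rows with $\lambda_j=0$ account for the values $h(i,1)+1,\dots,l_i$.
\end{itemize}
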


The following result is motivated by Cigler~\cite{Cig21}.

\begin{lemma}\label{BqSchur}
	For $n\ge k\ge 0$, we have $B_q(n,k)=\ps_n\left(s_{(3^k)}\right)$.
\end{lemma}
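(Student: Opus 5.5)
We apply the hook-content formula (Lemma~\ref{lem:hook-content}) to the rectangle $\lambda=(3^k)$, which has at most $n$ parts since $k\le n$. Then we compare the result factor by factor with the definition of $B_q(n,k)$.

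\emph{The power of $q$.} For $\lambda=(3^k)$,
\[
\sum_{i\ge1}(i-1)\lambda_i=3\sum_{i=1}^k (i-1)=\tfrac{3k(k-1)}{2}.
\]
This is exactly the prefactor in $B_q(n,k)$, so it suffices to show
\[
\prod_{(i,j)\in(3^k)}\frac{1-q^{n+j-i}}{1-q^{h(i,j)}}
=\frac{\begin{bmatrix} n \\ k \end{bmatrix}_q \begin{bmatrix} n+1 \\ k \end{bmatrix}_q \begin{bmatrix} n+2 \\ k \end{bmatrix}_q}{\begin{bmatrix} k+1 \\ k \end{bmatrix}_q \begin{bmatrix} k+2 \\ k \end{bmatrix}_q}.
\]

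\emph{The content factors.} Organize them by columns $j=1,2,3$. In column $j$ the values $n+j-i$ for $i=1,\dots,k$ run over $n+j-k,\dots,n+j-1$. After dividing by $(1-q)^{k}$, column $j$ contributes $[n+j-1]_q!/[n+j-1-k]_q!$.

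\emph{The hook factors.} In column $j$ of $(3^k)$ we have $h(i,j)=(3-j)+(k-i)+1$. Hence column $3$ gives hooks $1,\dots,k$, column $2$ gives $2,\dots,k+1$, and column $1$ gives $3,\dots,k+2$. The hook product, after dividing by $(1-q)^{3k}$, is therefore
\[
[k]_q!\cdot\frac{[k+1]_q!}{[1]_q}\cdot\frac{[k+2]_q!}{[2]_q!}.
\]

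\emph{The right-hand side.} Expand it in $q$-factorials. We have $\begin{bmatrix} k+1 \\ k \end{bmatrix}_q=[k+1]_q$ and $\begin{bmatrix} k+2 \\ k \end{bmatrix}_q=[k+2]_q[k+1]_q/[2]_q$. The numerator is
\[
\prod_{m=0}^{2}\frac{[n+m]_q!}{[k]_q!\,[n+m-k]_q!}.
\]
So the right-hand side equals
\[
\prod_{m=0}^{2}\frac{[n+m]_q!}{[n+m-k]_q!}\cdot\frac{[2]_q!}{[k]_q!^3\,[k+1]_q^2\,[k+2]_q}.
\]
The content product matches the first factor, with $m=j-1$. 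It remains to check that
\[
[k]_q!\,[k+1]_q!\,[k+2]_q!/[2]_q! = [k]_q!^3\,[k+1]_q^2\,[k+2]_q/[2]_q!,
\]
and this holds since $[k+1]_q!=[k]_q![k+1]_q$ and $[k+2]_q!=[k]_q![k+1]_q[k+2]_q$.

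\emph{Edge cases.} For $k=0$ both sides equal $1$, since the empty shape gives an empty product. No real obstacle arises. The only point needing care is the bookkeeping of hook lengths by column and the normalization by powers of $(1-q)$: there are $3k$ content factors and $3k$ hook factors, so these powers cancel.
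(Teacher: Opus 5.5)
Your proposal is correct and follows essentially the same route as the paper. Both apply the hook-content formula to $(3^k)$, read off the exponent $3k(k-1)/2$ and the hook lengths $k-i-j+4$, and match the resulting $q$-factorial ratios $\frac{[n]_q!\,[n+1]_q!\,[n+2]_q!\,[2]_q!}{[k]_q!\,[k+1]_q!\,[k+2]_q!\,[n-k]_q!\,[n-k+1]_q!\,[n-k+2]_q!}$ against the $q$-binomial expression; the only cosmetic difference is that you group the factors by columns, while the paper groups them by rows before collapsing to the same ratios.
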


\begin{proof}
	For the partition $\lambda = (3^k)$, it is easy to check that
	\begin{equation*}
		\sum_{i\ge1} (i-1)\la_i=\frac{3k(k-1)}{2} \quad \text{and} \quad h(i,j)=k-i-j+4.
	\end{equation*}
	By Lemma~\ref{lem:hook-content}, we deduce that
	\begin{align}
		\ps_n\left(s_{(3^k)}\right)&=q^{\frac{3k(k-1)}{2}}\prod_{(i,j)\in\lambda}
		\frac{1-q^{n+j-i}}{1-q^{h(i,j)}}\nonumber\\
		& = q^{\frac{3k(k-1)}{2}}\prod_{i=1}^k \prod_{j=1}^3 \frac{[n - i + j]_q}{[k- i - j + 4]_q}\label{eq-expre}\\
		&=q^{\frac{3k(k-1)}{2}}\prod_{i=1}^k \frac{[n - i + 1]_q[n - i + 2]_q[n - i + 3]_q}{[k - i + 1]_q[k - i + 2]_q[k - i + 3]_q}.\nonumber
	\end{align}
	Since
	\begin{equation*}
		\prod_{i=1}^k [n - i + 1]_q  [n - i + 2]_q  [n - i + 3]_q = \frac{[n]_q!}{[n - k]_q!} \cdot \frac{[n + 1]_q!}{[n - k + 1]_q!} \cdot \frac{[n + 2]_q!}{[n - k + 2]_q!}
	\end{equation*}
	and
	\begin{equation*}
		\prod_{i=1}^k [k - i + 1]_q  [k - i + 2]_q  [k - i + 3]_q = \frac{[k]_q!}{[0]_q!} \cdot \frac{[k + 1]_q!}{[1]_q!} \cdot \frac{[k + 2]_q!}{[2]_q!} = \frac{[k]_q![k + 1]_q![k + 2]_q!}{[2]_q!},
	\end{equation*}
	so we have 
	\begin{align*}
		\ps_n\left(s_{(3^k)}\right)&=q^{\frac{3k(k-1)}{2}}
		\frac{[n]_q![n + 1]_q![n + 2]_q! \cdot [2]_q!}{[k]_q![k + 1]_q![k + 2]_q! \cdot [n - k]_q![n - k + 1]_q![n - k + 2]_q!}\\
		&=q^{\frac{3k(k-1)}{2}}\frac{\begin{bmatrix} n \\ k \end{bmatrix}_q \begin{bmatrix} n + 1 \\ k \end{bmatrix}_q \begin{bmatrix} n + 2 \\ k \end{bmatrix}_q}{\begin{bmatrix} k + 1 \\ k \end{bmatrix}_q \begin{bmatrix} k + 2 \\ k \end{bmatrix}_q}=B_q(n,k).
	\end{align*}
	This completes the proof.
\end{proof}

The following result is immediate.
\begin{lemma}\label{coeff}
	For all $n,k\in \mathbb{N}$,
	we have $B(n,k)=\ps^1_n\left(s_{(3^k)}\right)$.
\end{lemma}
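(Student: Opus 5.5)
The natural plan is to specialize Lemma~\ref{BqSchur} at $q=1$. For $0\le k\le n$ we have $\ps_n(s_{(3^k)})=B_q(n,k)$, and both sides are polynomials in $q$. This holds for $\ps_n$ because it is a finite sum of monomials in $q$. It holds for $B_q(n,k)$ because it equals $\ps_n(s_{(3^k)})$, or directly because it is a product of Gaussian binomials divided by $\begin{bmatrix} k+1\\ k\end{bmatrix}_q\begin{bmatrix} k+2\\ k\end{bmatrix}_q=[k+1]_q[k+2]_q[k+1]_q/[2]_q$. Setting $q=1$ on the left turns $s_\lambda(1,q,\ldots,q^{n-1})$ into $s_\lambda(1^n)=\ps^1_n(s_\lambda)$. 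On the right, each $[m]_q\to m$ and $q^{3k(k-1)/2}\to1$, so $B_q(n,k)\to B(n,k)$.

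Alternatively, and more cleanly, I would apply the second formula of Lemma~\ref{lem:hook-content} directly. For $\lambda=(3^k)$ with $k\le n$ it gives
$$\ps^1_n(s_{(3^k)})=\prod_{i=1}^k\frac{(n-i+1)(n-i+2)(n-i+3)}{(k-i+1)(k-i+2)(k-i+3)}.$$
The same factorial bookkeeping as in the proof of Lemma~\ref{BqSchur}, with $q=1$, turns this into $\frac{n!(n+1)!(n+2)!\,2!}{k!(k+1)!(k+2)!(n-k)!(n-k+1)!(n-k+2)!}$. That expression equals $B(n,k)$, using $\binom{k+1}{k}\binom{k+2}{k}=(k+1)(k+2)(k+1)/2$.

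The only real care needed is with the boundary cases, since the statement is for all $n,k\in\mathbb N$. Take first $k>n$, where $B(n,k)=0$ by convention. Here $(3^k)$ has more than $n$ parts, so $s_{(3^k)}(1^n)=0$ because no semistandard tableau with entries in $[n]$ has a column of length $k>n$. Alternatively, the hook-content product contains the factor $n+j-i=0$ at the cell $(n+1,1)$. Next take $n=0$: then $\ps^1_0(s_\lambda)$ is $1$ for $\lambda=\varnothing$ (that is, $k=0$) and $0$ otherwise, matching $B(0,0)=1$ and $B(0,k)=0$ for $k>0$. Finally, $k=0$ with any $n$ gives $s_\varnothing=1=B(n,0)$. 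I expect no genuine obstacle; the main point is to check these conventions.
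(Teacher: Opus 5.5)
Your first argument, setting $q=1$ in Lemma~\ref{BqSchur}, is exactly the paper's route: the paper simply calls the result immediate from that lemma. Your direct $q=1$ hook-content computation is a correct reworking of the same calculation, and your separate treatment of $k>n$, where Lemma~\ref{BqSchur} does not apply but $s_{(3^k)}(1^n)=0$ matches the convention $B(n,k)=0$, supplies a detail the paper leaves implicit.
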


We next use~\eqref{multi} to derive two identities for the
principal specializations of the Schur functions $s_{\lambda}$ indexed by three-column shapes,
together with the fact that a Schur function is invariant under a $180^{\circ}$ rotation of its diagram (see \cite[Exercise 7.56(a)]{Sta24}).

\begin{lemma}\label{lem-for-c1}
For $m\ge n\ge 1$ and $k\ge 0$, we have 
\begin{equation*}
	\ps_m^1\left(s_{(3^k)}\right)
	=
	\sum_{0\le a\le b\le c\le m-n}
	\ps_n^1\!\left(s_{(3^{k-c},2^{c-b},1^{b-a})}\right)
	\ps_{m-n}^1\!\left(s_{(3^{a},2^{b-a},1^{c-b})}\right).
\end{equation*}
\end{lemma}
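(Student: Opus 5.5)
Specialize \eqref{multi} with $\lambda=(3^k)$ and $\mu=\varnothing$, letting $X$ consist of $n$ variables equal to $1$ and $Y$ of $m-n$ variables equal to $1$. Since $s_{(3^k)}$ is symmetric, $\ps^1_m(s_{(3^k)})=s_{(3^k)}(X,Y)$, and we obtain
\[
\ps_m^1\bigl(s_{(3^k)}\bigr)=\sum_{\nu\subseteq(3^k)} \ps_n^1\bigl(s_{(3^k)/\nu}\bigr)\,\ps_{m-n}^1\bigl(s_{\nu}\bigr).
\]
(If the order of $X$ and $Y$ needs to be swapped to match the statement, we use that $s_{\lambda/\mu}(X,Y)$ is symmetric in all variables.)

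Next we parametrize $\nu\subseteq(3^k)$. Such a $\nu$ has at most three columns and at most $k$ rows, so we write $\nu=(3^{a},2^{b-a},1^{c-b})$ with $0\le a\le b\le c\le k$. Here $a,b,c$ are the lengths of the third, second and first columns, i.e.\ $\nu'=(c,b,a)$. This matches the second factor in the statement.

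The complement $(3^k)/\nu$ has rows of lengths $3$ repeated $k-c$ times, $2$ repeated $c-b$ times, $1$ repeated $b-a$ times, and $0$ repeated $a$ times, read from the bottom. Rotating the rectangle by $180^\circ$ turns this skew shape into the straight shape $(3^{k-c},2^{c-b},1^{b-a})$. By the rotation invariance cited just before the lemma (\cite[Exercise 7.56(a)]{Sta24}), $s_{(3^k)/\nu}=s_{(3^{k-c},2^{c-b},1^{b-a})}$, which gives the first factor.

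The range of summation needs the only real care, and is the main obstacle. The statement requires $c\le m-n$ rather than $c\le k$. By the hook-content formula (Lemma~\ref{lem:hook-content}), $\ps^1_N(s_\lambda)=0$ whenever $\ell(\lambda)>N$, because the factor $N+j-i$ vanishes at the cell $(N+1,1)$. Hence the term with $\nu$ vanishes unless $c=\ell(\nu)\le m-n$. Likewise it vanishes unless $\ell\bigl((3^{k-c},2^{c-b},1^{b-a})\bigr)=k-a\le n$.

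Both index sets, $c\le k$ and $c\le m-n$, therefore reduce to the same set of nonvanishing terms once we note the following. If $c>k$, the partition $(3^{k-c},\ldots)$ is invalid, and $s$ of it is $0$ by the paper's convention. If $c\le k$ but $c>m-n$, the second factor is $0$ as just shown. The edge case $m=n$ uses $\ps_0^1$, which is $1$ only at $\varnothing$, so it is handled by the same argument. This yields the stated identity.
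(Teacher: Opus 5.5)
Your proposal is correct and follows the paper's proof. Both apply \eqref{multi} with $\lambda=(3^k)$, $\mu=\varnothing$, write $\nu=(3^a,2^{b-a},1^{c-b})$, and use $180^\circ$ rotation invariance to identify $s_{(3^k)/\nu}$ with $s_{(3^{k-c},2^{c-b},1^{b-a})}$; the paper specializes to $1,q,\ldots,q^{m-1}$ and then sets $q=1$, which makes no difference. Your explicit reconciliation of the summation ranges $c\le k$ and $c\le m-n$ supplies a step the paper leaves implicit. One quibble: the paper states Lemma~\ref{lem:hook-content} only for partitions with at most $n$ parts, so you should not cite it for the vanishing of $\ps^1_N(s_\lambda)$ when $\ell(\lambda)>N$. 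Justify that directly instead: a column of a semistandard tableau with entries in $\{1,\ldots,N\}$ has length at most $N$.
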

\begin{proof}
Let $\lambda=3^k$, $\mu=\emptyset$.
Set $\nu=(3^a,2^{b-a},1^{c-b})$ with $0\le a\le b\le c\le m-n$. Then by~\eqref{multi}, we have
\begin{align*}
&s_{(3^k)}(1,q,\ldots,q^{m-1})\\
=&\sum_{0\le a\le b\le c\le m-n}
s_{(3^{k-c},2^{c-b},1^{b-a})}(1,q,\ldots,q^{n-1})\,
s_{(3^{a},2^{b-a},1^{c-b})}(q^n,\ldots,q^{m-1}).
\end{align*}
We complete the proof by taking $q=1$.
\end{proof}

\begin{lemma}\label{lem-for-c2}
Let $k\ge0$ and $n\ge1$.
For any $a<0$ or $b<0$, set
$s_{(2^a,1^b)}=0$ by convention. Then we have
\begin{align*}
	\ps_n\left(s_{(3^{k})}\right)
	=
	&
	\ps_{n-1}\left(s_{(3^{k})}\right)
	+q^{\,n-1}
	\ps_{n-1}\left(s_{(3^{k-1},2)}\right)
	\\
	&
	+q^{2(n-1)}
	\ps_{n-1}\left(s_{(3^{k-1},1)}\right)
	+q^{3(n-1)}
	\ps_{n-1}\left(s_{(3^{k-1})}\right),
\end{align*}
Moreover, 
\begin{align*}
	\ps_n^{1}\!\left(s_{(3^{k})}\right)
	=
	\ps_{n-1}^{1}\Big(
	s_{(3^{k})}
	+s_{(3^{k-1},2)}
	+s_{(3^{k-1},1)}
	+s_{(3^{k-1})}
	\Big).
\end{align*}
\end{lemma}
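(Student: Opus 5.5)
The plan is to apply the branching rule \eqref{multi} with $X=\{1,q,\ldots,q^{n-2}\}$ and $Y=\{q^{n-1}\}$, a single variable. For a single variable $y$, the skew Schur function $s_{\nu/\mu}(y)$ is nonzero only when $\nu/\mu$ is a horizontal strip, and in that case it equals $y^{|\nu/\mu|}$. Since \eqref{multi} is symmetric in the two alphabets, we may instead write
\[
s_{(3^k)}(X,Y)=\sum_{\nu} s_{\nu}(X)\, s_{(3^k)/\nu}(Y),
\]
which has the same form as \eqref{multi} with the roles of the alphabets exchanged. Here $\nu$ ranges over partitions with $\nu\subseteq(3^k)$ such that $(3^k)/\nu$ is a horizontal strip.

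The main step is classifying these $\nu$. A horizontal strip has at most one cell per column. The complement of $\nu$ inside the rectangle $(3^k)$ is a translate of a partition rotated by $180^\circ$. So $(3^k)/\nu$ is a horizontal strip exactly when all removed cells lie in the last row. Consequently $\nu$ is one of $(3^k)$, $(3^{k-1},2)$, $(3^{k-1},1)$, $(3^{k-1})$, with $|(3^k)/\nu|=0,1,2,3$ respectively.

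The argument can be made precise as follows.
\begin{itemize}
\item Suppose rows $i<k$ lost a cell. Since $\nu$ is a partition, $\nu_k\le\nu_i<3$.
\item Then the column $\nu_i+1$ contains removed cells in both rows $i$ and $k$.
\item This contradicts the horizontal-strip condition.
\end{itemize}
When $k=0$, only $\nu=\varnothing$ occurs, and the convention $s_{(2^a,1^b)}=0$ for negative exponents makes the other three terms vanish. With the reading $(3^{-1},\cdot)$ interpreted as zero, the same convention covers these terms in general.

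Substituting $s_{(3^k)/\nu}(q^{n-1})=q^{(n-1)|(3^k)/\nu|}$ gives the first identity. Note that $s_\nu(1,\ldots,q^{n-2})=\ps_{n-1}(s_\nu)$; when $n-1$ is smaller than $\ell(\nu)$, both sides correctly vanish, because a Schur polynomial with more rows than variables is zero.

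Setting $q=1$ then yields the second identity. The only point requiring care is the horizontal-strip classification; everything else is direct substitution.
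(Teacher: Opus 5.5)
Your proposal is correct and follows essentially the paper's route: both apply \eqref{multi} with the one-variable alphabet $Y=\{q^{n-1}\}$. The only difference is where the single variable goes. You attach it to the skew factor, so the horizontal-strip classification of $(3^k)/\nu$ directly produces the four straight shapes. The paper attaches it to $s_\nu(Y)$, which forces $\nu=(j)$ with $0\le j\le 3$, and then uses $180^\circ$ rotation invariance to identify $s_{(3^k)/(j)}$ with $s_{(3^{k-1},3-j)}$.
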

\begin{proof}
Let $\lambda=(3^k)$, $\mu=\emptyset$ and let $Y=\{q^{n-1}\}$ in~\eqref{multi}.
Then $\nu\in\{\emptyset,(1),(2), (3)\}$ and 
\begin{align*}
s_{(3^k)}(1,q,\ldots,q^{n-1})=
&s_{(3^k)}(1,q,\ldots,q^{n-2})+s_{(3^{k-1},2)}(1,q,\ldots,q^{n-2}) s_{(1)}(q^{n-1})\\
+&s_{(3^{k-1},1)}(1,q,\ldots,q^{n-2})s_{(2)}(q^{n-1})+s_{(3^{k-1})}(1,q,\ldots,q^{n-2}) s_{(3)}(q^{n-1}).
\end{align*}
Then the first equation follows. 
We obtain the second equation by taking $q=1$.
\end{proof}

\section{$q$-Log-convexity}\label{sec:q-log-convexity}

Let
\begin{equation*}
B_n(x)=\sum_{k=0}^{n}B(n,k)x^k.
\end{equation*}
Thus $B_n(1)=B_n$. 
We call $B_n(x)$ the $n$-th \emph{Baxter polynomial}. 
The main objective of this section is to prove the $q$-log-convexity of the Baxter polynomials, i.e.,
$$
\left[x^r\right]\left(B_{n-1}(x)B_{n+1}(x)-B_n(x)^2\right)\ge 0
$$ 
for all $0\le r\le 2n$, where $\left[x^r\right] f(x)$ is the coefficient of $x^r$ in $f(x)$.
Since $q$ is already used in the definition of $B_q(n,k)$,
we use $x$ as the polynomial variable to avoid confusion.

\begin{theorem}\label{q-lcx}
The Baxter polynomials $B_n(x)$ form a $q$-log-convex sequence. 
\end{theorem}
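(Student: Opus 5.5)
We follow the strategy of Chen, Wang and Yang for the Narayana polynomials and lift the coefficient inequality to a Schur positivity statement. By Lemma~\ref{coeff}, $B(n,k)=\ps^1_n\bigl(s_{(3^k)}\bigr)$. Hence
\[
\left[x^r\right]\left(B_{n-1}(x)B_{n+1}(x)-B_n(x)^2\right)
=\sum_{k}\Bigl(\ps^1_{n-1}(s_{(3^k)})\,\ps^1_{n+1}(s_{(3^{r-k})})-\ps^1_n(s_{(3^k)})\,\ps^1_n(s_{(3^{r-k})})\Bigr).
\]
Everything is then moved to a common alphabet of $n-1$ variables. For $\ps^1_n$ we use the one-step recursion of Lemma~\ref{lem-for-c2}. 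For $\ps^1_{n+1}$ we use Lemma~\ref{lem-for-c1} with $m=n+1$ and $n-1$ in place of $n$, so that two variables are peeled off. The peeled variables contribute explicit numbers: principal specializations at $1^1$ and $1^2$ of three-column shapes $(3^a,2^{b-a},1^{c-b})$.

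After this step, the $r$-th coefficient becomes $\ps^1_{n-1}(F_r)$, where $F_r$ is a finite integer combination of products $s_{(3^{i},\dots)}s_{(3^{j},\dots)}$ with short tails. The principal specialization $\ps^1_{n-1}$ sends Schur functions to nonnegative numbers. It therefore suffices to show that $F_r$ is Schur positive, or at least that its negative parts cancel after specialization.

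Next we apply $\omega$, which preserves Schur positivity by Lemma~\ref{SPOMEGA}. Three-column shapes $(3^k)$, $(3^{k-1},2)$, $(3^{k-1},1)$ become three-row shapes $(k,k,k)$, $(k,k,k-1)$, $(k,k,k-2)$, and so on. Summing over $k$ with $t$ marking $k$ turns $\omega(F_r)$ into the coefficient of $t^{r}$ in a combination of products of generating series $S_{\lambda(k)}$. This is exactly the form handled in Section~3. The expected outcome is an expression built from $t(\Ya\Yc-\Yb^2)+\Ya^2$, $t(\Yb\Yc-\Ya\Zf)-\Ya\Yb$, and $\Ya\Zg-\Zc^2+\Za S_{(k,k,1,1,1)}$, multiplied by Schur positive factors such as $e_1$ or $h_2$ together with the explicit numbers from the peeled variables. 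Note that these series have two-row indices $(k,k)$, $(k+1,k)$, while the Baxter shapes become three-row shapes after $\omega$. I would therefore first reorganize the three-column products via the rotation symmetry or a column-complement, so that only two varying rows remain relative to a fixed third row. This is where the shapes $(k,k,1)$, $(k,k,2)$, $(k,k,1,1)$ come from.

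Theorems~\ref{thm:base-pf} and~\ref{thm:base-pf22} then give these combinations as explicit sums $\sum_{0\le b\le a}s_{(a,a,b,b)}t^{a+b}$ and their shifted analogues. Each negative term, such as $-e_1\sum s_{(a,a,b,b)}t^{a+b}$ in~\eqref{eq:base-pf2}, must be absorbed by positive terms coming with larger numerical weights. This uses the Pieri rule (Theorem~\ref{h-e}) for $e_1 s_{(a,a,b,b)}$ together with the hook-content values.

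The main obstacle is the bookkeeping in the middle step. One must express the full difference $F_r$, after $\omega$, as a nonnegative combination of the three Pfaffian identities plus manifestly positive leftovers. The delicate points are the numerical weights coming from $\ps^1_1$ and $\ps^1_2$ of the peeled shapes, and the boundary terms in $k$. If exact Schur positivity of $F_r$ fails, I would fall back to positivity after applying $\ps^1_{n-1}$. That fallback compares the dominant terms $s_{(a+1,a+1,b+1,b+1,1)}$ against $e_1 s_{(a,a,b,b)}$ directly through the hook-content formula of Lemma~\ref{lem:hook-content}, checking the inequality term by term in $(a,b)$.
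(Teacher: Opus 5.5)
Your reduction is the paper's. Peeling variables with Lemmas~\ref{lem-for-c1} and~\ref{lem-for-c2} gives $[x^r]\bigl(B_{n-1}(x)B_{n+1}(x)-B_n(x)^2\bigr)=\ps^1_{n-1}\bigl(\sum_{k}F(r,k)\bigr)$. The paper then proves that $\omega\bigl(\sum_k F(r,k)\bigr)$ is Schur positive using generating series, a complement, and Theorems~\ref{thm:base-pf} and~\ref{thm:base-pf22}, much as you anticipate.

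The gap is in the step you defer. The complement that turns $(k,k,k-2)$, $(k,k-1,k-1)$, $(k-1,k-1,k-2),\dots$ into $(k,k,2)$, $(k,k,1,1)$, $(k,k,2,1,1),\dots$ is taken inside the rectangle $(k^5)$. The identity $s_\lambda(X)=\pi_5^k s^{\vee,k}_\lambda(X^{-1})$ of Lemma~\ref{lem:complement} holds only in five variables. The same is true of $e_5s_\lambda=s_{\lambda+(1^5)}$, which is needed to rewrite $\Zh\Ya$ as $\Za S_{(k,k,1,1,1)}$. So this route only determines the coefficients of $s_\lambda$ with $\ell(\lambda)\le5$. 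A product $s_\mu s_\nu$ with $\ell(\mu),\ell(\nu)\le 3$ also has constituents of length $6$, and $\ps^1_{n-1}$ sees them once $n\ge7$. Your plan has no mechanism for these terms.

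The paper supplies one. It uses the first-column lemma for Littlewood--Richardson coefficients (valid when $\ell(\lambda)=\ell(\mu)+\ell(\nu)$) together with an augmentation of LR tableaux to get $c^{\lambda+(1^6)}_{\mu+(1^3),\nu+(1^3)}=c^{\lambda}_{\mu,\nu}$. This yields $[s_\lambda]F_i(r)=[s_{\lambda-(1^6)}]F_i(r-2)$, which lifts the length-$\le5$ expansions to the full expansions~\eqref{GG1}--\eqref{GG3}.

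Second, what you call bookkeeping is an exact identity, not an estimate. The paper splits $\omega\bigl(\sum_kF(r,k)\bigr)=F_1(r)+2F_2(r)+F_3(r)$. After complementing and applying Pieri's rule ($h_1\Ya=\Yb+\Za$ and Lemma~\ref{proof_j2}), each piece becomes exactly one of the three Pfaffian combinations. In particular, the term $-e_1\sum s_{(a,a,b,b)}t^{a+b}$ in~\eqref{eq:base-pf2} is not absorbed by larger weights. One has $t\mathcal G_2(t)=e_1\bigl[\Ya^2+t(\Ya\Yc-\Yb^2)\bigr]+t(\Yb\Yc-\Ya\Zf)-\Ya\Yb$, so that term cancels exactly against $e_1$ times~\eqref{eq:base-pf}. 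Your hook-content fallback could not replace this cancellation: in at most four variables the five-row terms $s_{(a+1,a+1,b+1,b+1,1)}$ vanish while $e_1s_{(a,a,b,b)}$ does not.
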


Taking $x=1$, it is immediate that the sequence of the Baxter numbers 
$(B_n)_{n\ge0}$ forms a log-convex sequence,
which was first proved in~\cite[Proposition 6.2]{MS26}.

For $n\ge 1$ and $r\ge 0$, define 
$$
C_1(r)=\left[x^r\right]\left(B_{n+1}(x)B_{n-1}(x)\right)\quad \textrm{and}
\quad C_2(r)=\left[x^r\right]\left(B_n(x)^2\right).
$$
Then, by Lemma~\ref{coeff}, we have
$$
C_1(r)=
\sum_{k=0}^{r}
\ps_{n+1}^{1}\!\left(s_{(3^k)}\right)
\ps_{n-1}^{1}\!\left(s_{(3^{\,r-k})}\right)\quad \textrm{and} \quad 
C_2(r)=
\sum_{k=0}^{r}
\ps_{n}^{1}\!\left(s_{(3^k)}\right)
\ps_{n}^{1}\!\left(s_{(3^{\,r-k})}\right).
$$
For $r\geq0$ and $0\leq k\leq r$, define
\begin{align*}
	F(r,k)=
	&\Big(
	s_{(3^{k-2},2^2)}
	+2\,s_{(3^{k-2},2,1)}
	+s_{(3^{k-1},1)}
	+s_{(3^{k-2},1^2)}
	+2\,s_{(3^{k-1})}
	+s_{(3^{k-2},2)}
	\Big) \cdot s_{(3^{r-k})}\\\nonumber
	&-
	\Big(
	s_{(3^{k-1},2)}+s_{(3^{k-1},1)}
	\Big)\cdot
	\Big(
	s_{(3^{r-k-1},2)}+s_{(3^{r-k-1},1)}
	\Big),
\end{align*}
where $F(0,0)=0$.

\begin{theorem}\label{thm:main}
	For $r\geq1$,  the symmetric function $\sum_{k=0}^{r}F(r,k)$ is Schur positive.
\end{theorem}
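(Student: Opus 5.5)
The plan is to pass to generating functions in $t$, apply $\omega$, and recognize the resulting series as a combination of the Pfaffian identities of Section~3 (Theorems~\ref{thm:base-pf} and~\ref{thm:base-pf22}), whose right-hand sides are manifestly Schur positive. By Lemma~\ref{SPOMEGA} it suffices to prove that $\omega\bigl(\sum_{k}F(r,k)\bigr)$ is Schur positive. Under $\omega$ the three-column shapes become three-row shapes:
\[
s_{(3^k)}\mapsto s_{(k,k,k)},\qquad s_{(3^{k-1},2)}\mapsto s_{(k,k,k-1)},\qquad s_{(3^{k-1},1)}\mapsto s_{(k,k-1,k-1)},
\]
and similarly for the other shapes appearing in $F(r,k)$. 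Exactly as in the proof of the corollary recovering \cite[Theorem 3.1]{CWY10}, the convolution formula turns $\sum_{k=0}^r\omega(F(r,k))$ into the coefficient of $t^{r}$ (up to a fixed shift) in a quadratic expression in series of the type $S_{\lambda(k)}$, namely
\[
\bigl(S_{(k,k,k-2)+\cdots}\bigr)\cdot S_{(k,k,k)}\;-\;\bigl(S_{(k,k,k-1)}+S_{(k,k-1,k-1)}\bigr)^2 .
\]
The convention $F(0,0)=0$ and the vanishing of invalid shapes should account for the boundary terms.

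The second step is to make this series amenable to the identities already proved. I would rotate each three-column shape $(3^{a},2^{b},1^{c})$ by $180^\circ$ inside a suitable rectangle. Equivalently, I would factor out full columns, using that the shapes differ from $(3^k)$ only in their last one or two rows. The aim is to rewrite every factor as a two-row series $\Ya,\Yb,\Yc,\Zf$ or a series of the form $S_{(k,k,\nu)}$ with $\nu\in\{(1),(2),(1,1),(2,2),(1,1,1)\}$. The coefficients $2$ in $F(r,k)$, together with Pieri's rule (Theorem~\ref{h-e}) applied to products such as $e_1\,S_{(k,k,1)}$ or $h_2\,\Ya$, suggest that the expression decomposes as a nonnegative combination of three pieces. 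These are the left side of \eqref{eq:base-pf} (possibly multiplied by $e_1$ or $h_2$), the left side of \eqref{eq:base-pf2}, and the left side of \eqref{jr3}, plus possibly a remainder of the form (Schur positive series)$\times$(Schur positive series).

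Substituting the right-hand sides then gives a finite sum of terms. These are $s_{(a,a,b,b)}$, $s_{(a+1,a+1,b+1,b+1,1)}$, $s_{(a+2,a+2,b+2,b+2,2)}$, and products of these with $e_1$ or $h_2$. Pieri's rule expands all of these positively. The one negative contribution, $-e_1\sum s_{(a,a,b,b)}t^{a+b}$ in \eqref{eq:base-pf2}, must be cancelled by a matching $+e_1\sum s_{(a,a,b,b)}t^{a+b}$ coming from the $e_1$-multiple of \eqref{eq:base-pf}. I would arrange the decomposition precisely so that this cancellation occurs.

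The main obstacle is the bookkeeping in the second step, namely finding the exact linear combination that matches $\omega(F)$ term by term. If a clean match via the existing identities fails, the fallback is to build a new Pfaffian directly. One would take $n=3$ in Theorem~\ref{thm:weighted-skew-pfaffian}, or an augmented version as in Theorem~\ref{thm:augmented-skew-pfaffian} with a suitable $\mu$ or $p$, so that the $6\times6$ (or $7\times7$) Pfaffian expanded via Lemma~\ref{lem-pf}(ii) and Lemma~\ref{lem:entry} reproduces the generating series of $\omega(F)$. The minor summation side then gives a manifestly Schur positive sum of shapes $(a_3^2,a_2^2,a_1^2)$ or their augmented analogues. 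Verifying the entrywise computation, including the low-degree correction terms as in Appendix~B, would be the routine but lengthy part.
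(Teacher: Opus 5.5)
\textbf{There is a genuine gap: your argument only reaches the coefficients of $s_\lambda$ with $\ell(\lambda)\le 5$.}

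Your prediction of which identities enter is right. The paper writes $\omega\bigl(\sum_k F(r,k)\bigr)=F_1(r)+2F_2(r)+F_3(r)$. The three generating series then reduce as follows:
\begin{itemize}
\item one reduces to \eqref{eq:base-pf};
\item one reduces to $e_1$ times \eqref{eq:base-pf} plus \eqref{eq:base-pf2}, with exactly the cancellation of $-e_1\sum s_{(a,a,b,b)}t^{a+b}$ that you anticipate;
\item one reduces to \eqref{jr3}.
\end{itemize}

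The gap is your second step. What turns $(k,k,k),(k,k,k-1),(k,k-1,k-1),(k,k,k-2),(k-1,k-1,k-1),\dots$ into $(k,k),(k,k,1),(k,k,1,1),(k,k,2),(k,k,1,1,1),\dots$ is complementation in the rectangle $(k^5)$:
\[
s_\lambda(x_1,\dots,x_5)=(x_1\cdots x_5)^k\,s_{\lambda^{\vee,k}}(x_1^{-1},\dots,x_5^{-1}).
\]
Likewise, ``factoring out full columns'' means $s_{\lambda+(1^5)}=e_5s_\lambda$. Neither is an identity in the ring of symmetric functions; both hold only in exactly five variables. The same is true of steps such as $t\,e_5\Za=\Zh$, which you would need for the \eqref{jr3} piece.

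Carried out correctly, together with the compatibility of complementation with products (Lemma~\ref{slsm}), your computation therefore determines only the coefficients of $s_\lambda$ with $\ell(\lambda)\le5$. A product of two three-row Schur functions also contains $s_\lambda$ with $\ell(\lambda)=6$. Such terms occur in both the positive and the negative products of $\omega(F(r,k))$, so their net coefficients are not controlled by your argument. They are in fact nonzero: these are the terms with $d\ge1$ in \eqref{GG1}. Passing to six variables does not help, because the complement of $(k,k,k)$ in $(k^6)$ is again $(k,k,k)$.

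\textbf{The missing idea is the paper's length-six reduction.}
\begin{itemize}
\item For $\ell(\lambda)=6$, only products with $\ell(\mu)=\ell(\nu)=3$ contribute.
\item For those, $c^{\lambda}_{\mu,\nu}=c^{\lambda-(1^6)}_{\mu-(1^3),\nu-(1^3)}$. This follows from Lemma~\ref{lem:first-column} together with the tableau augmentation in Lemma~\ref{lem:column-shift}.
\item Removing a column sends the summand indexed by $k$ in $F_i(r)$ to the summand indexed by $k-1$ in $F_i(r-2)$. Hence $[s_\lambda]F_i(r)=[s_{\lambda-(1^6)}]F_i(r-2)$.
\item Iterating $\lambda_6$ times reduces everything to the five-variable computation, which yields \eqref{GG1}--\eqref{GG3}.
\end{itemize}

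Your fallback does not supply this either. The entries of a $6\times 6$ (or augmented $7\times7$) matrix $H\Omega H^{\mathsf T}$ are two-row series, and its Pfaffian is cubic in them. There is no evident match to a quadratic expression in three-row series, so as stated the fallback is a hope rather than an argument.
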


The Schur positivity of $\sum_{k=0}^{r}F(r,k)$ plays a key role in the proof of Theorem~\ref{q-lcx}.
We will prove Theorem~\ref{thm:main} at the end of this section.
We now prove the $q$-log-convexity of $(B_n(x))_{n\ge 0}.$

\begin{proof}[Proof of Theorem~\ref{q-lcx}]
The case $n=1$ follows directly from
$$B_0(x)B_2(x)-B_1(x)^2=(1+4x+x^2)-(1+x)^2=2x.$$
Now assume that $n\ge 2$.
It suffices to prove that $C_1(r)-C_2(r)\ge 0$
	for all $r\ge 0$.
We claim that
	\begin{equation}\label{c1-c2}
		C_1(r)-C_2(r)=\ps_{n-1}^1 \left(\sum_{k=0}^{r}F(r,k)\right).
	\end{equation}
	Theorem~\ref{thm:main} gives the Schur positivity of $\sum_{k=0}^{r}F(r,k)$.
	Taking the principal specializations of $\sum_{k=0}^{r}F(r,k)$,
	we obtain $C_1(r)-C_2(r)\ge 0.$
	
	We now give a proof of~\eqref{c1-c2}. 
	The proof is divided into three steps.
	The first two steps give the expansions of $C_1(r)$ and $C_2(r) $.
	We present an explicit expression of $C_1(r)-C_2(r)$ in the last step.

    \medskip
    \noindent
	\textbf{Expansion of $C_1(r)$.}\\
	Substituting $m=n+1$ and $n=n-1$ in Lemma~\ref{lem-for-c1},  it follows that
	$$
	\ps_{n+1}^{1}\!\left(s_{(3^k)}\right)
	=
	\sum_{0\le a\le b\le c\le2}
	\ps_{n-1}^{1}\!\left(s_{(3^{k-c},2^{c-b},1^{b-a})}\right)
	\ps_{2}^{1}\!\left(s_{(3^{a},2^{b-a},1^{c-b})}\right).
	$$
	Using the fact that 
	$$
	\ps_{n-1}^{1}(s_{\lambda_1})\,
	\ps_{n-1}^{1}(s_{\lambda_2})
	=
	\ps_{n-1}^{1}(s_{\lambda_1}s_{\lambda_2}),
	$$
	we obtain 
	\begin{equation*}
		C_1(r)
		=
		\sum_{k=0}^{r}
		\sum_{0\le a\le b\le c\le2}
		\ps_{2}^{1}\!\left(s_{(3^{a},2^{b-a},1^{c-b})}\right)
		\ps_{n-1}^{1}
		\!\left(
		s_{(3^{k-c},2^{c-b},1^{b-a})}
		\, s_{(3^{r-k})}
		\right).
	\end{equation*}
	Substituting $\mu=\emptyset$ and applying~\eqref{multi} with the one-variable
     alphabets $X = \{1\}$ and  $Y =\{1\}$ gives
	\begin{align*}
		\ps_{2}^{1}
		\left(
		s_{(3^a,2^{b-a},1^{c-b})}
		\right)
		={}&
		\ps_{1}^{1}
		\left(
		s_{(3^a,2^{b-a},1^{c-b})}
		\right)
		+
		\ps_{1}^{1}
		\left(
		s_{(3^a,2^{b-a},1^{c-b-1})}
		\right)
		\\
		&
		+
		\ps_{1}^{1}
		\left(
		s_{(3^a,2^{b-a-1},1^{c-b+1})}
		\right)
		+
		\ps_{1}^{1}
		\left(
		s_{(3^{a-1},2^{b-a+1},1^{c-b})}
		\right)
		\\
		&
		+
		\ps_{1}^{1}
		\left(
		s_{(3^a,2^{b-a-1},1^{c-b})}
		\right)
		+
		\ps_{1}^{1}
		\left(
		s_{(3^{a-1},2^{b-a+1},1^{c-b-1})}
		\right)
		\\
		&
		+
		\ps_{1}^{1}
		\left(
		s_{(3^{a-1},2^{b-a},1^{c-b+1})}
		\right)
		+
		\ps_{1}^{1}
		\left(
		s_{(3^{a-1},2^{b-a},1^{c-b})}
		\right).
	\end{align*}
	
	Hence, $C_1(r)$ can be decomposed into eight sums:
	\begin{align*}
		A_1(r)=&\sum_{k=0}^{r}\sum_{0\le a\le b\le c\le2}
		\ps_{1}^{1}
		\left(
		s_{(3^a,2^{b-a},1^{c-b})}
		\right)
		\ps_{n-1}^{1}
		\!\left(
		s_{(3^{k-c},2^{c-b},1^{b-a})}
		\, s_{(3^{r-k})}
		\right),\\
		A_2(r)=&\sum_{k=0}^{r}\sum_{0\le a\le b\le c\le2}
		\ps_{1}^{1}
		\left(
		s_{(3^a,2^{b-a},1^{c-b-1})}
		\right)
		\ps_{n-1}^{1}
		\!\left(
		s_{(3^{k-c},2^{c-b},1^{b-a})}
		\, s_{(3^{r-k})}
		\right),\\
		A_3(r)=&\sum_{k=0}^{r}\sum_{0\le a\le b\le c\le2}
		\ps_{1}^{1}
		\left(
		s_{(3^a,2^{b-a-1},1^{c-b+1})}
		\right)
		\ps_{n-1}^{1}
		\!\left(
		s_{(3^{k-c},2^{c-b},1^{b-a})}
		\, s_{(3^{r-k})}
		\right),\\
		A_4(r)=&\sum_{k=0}^{r}\sum_{0\le a\le b\le c\le2}
		\ps_{1}^{1}
		\left(
		s_{(3^{a-1},2^{b-a+1},1^{c-b})}
		\right)
		\ps_{n-1}^{1}
		\!\left(
		s_{(3^{k-c},2^{c-b},1^{b-a})}
		\, s_{(3^{r-k})}
		\right),\\
		A_5(r)=&\sum_{k=0}^{r}\sum_{0\le a\le b\le c\le2}
		\ps_{1}^{1}
		\left(
		s_{(3^a,2^{b-a-1},1^{c-b})}
		\right)
		\ps_{n-1}^{1}
		\!\left(
		s_{(3^{k-c},2^{c-b},1^{b-a})}
		\, s_{(3^{r-k})}
		\right),\\
		A_6(r)=&\sum_{k=0}^{r}\sum_{0\le a\le b\le c\le2}
		\ps_{1}^{1}
		\left(
		s_{(3^{a-1},2^{b-a+1},1^{c-b-1})}
		\right)
		\ps_{n-1}^{1}
		\!\left(
		s_{(3^{k-c},2^{c-b},1^{b-a})}
		\, s_{(3^{r-k})}
		\right),\\
		A_7(r)=&\sum_{k=0}^{r}\sum_{0\le a\le b\le c\le2}
		\ps_{1}^{1}
		\left(
		s_{(3^{a-1},2^{b-a},1^{c-b+1})}
		\right)
		\ps_{n-1}^{1}
		\!\left(
		s_{(3^{k-c},2^{c-b},1^{b-a})}
		\, s_{(3^{r-k})}
		\right),\\
		A_8(r)=&\sum_{k=0}^{r}\sum_{0\le a\le b\le c\le2}
		\ps_{1}^{1}
		\left(
		s_{(3^{a-1},2^{b-a},1^{c-b})}
		\right)
		\ps_{n-1}^{1}
		\!\left(
		s_{(3^{k-c},2^{c-b},1^{b-a})}
		\, s_{(3^{r-k})}
		\right).
	\end{align*}
	
	\medskip
    \noindent
	\textbf{Expansion of $C_2(r)$.}\\
	Substituting $m=n$ and $n=n-1$ in Lemma~\ref{lem-for-c1},  it follows that
	\begin{equation*}
		\ps_n^{1}\!\left(s_{(3^k)}\right)
		=
		\sum_{0\le a\le b\le c\le1}
		\ps_{n-1}^{1}
		\!\left(
		s_{(3^{k-c},2^{c-b},1^{b-a})}
		\right)
		\ps_{1}^{1}
		\!\left(
		s_{(3^{a},2^{b-a},1^{c-b})}
		\right).
	\end{equation*}
	Substituting $k=r-k$ in Lemma~\ref{lem-for-c2},  it follows that 
	$$
	\ps_n^{1}\!\left(s_{(3^{r-k})}\right)
	=
	\ps_{n-1}^{1}\Big(
	s_{(3^{r-k})}
	+s_{(3^{r-k-1},2)}
	+s_{(3^{r-k-1},1)}
	+s_{(3^{r-k-1})}
	\Big).
	$$
	Thus, we obtain 
	\begin{align*}
		C_2(r)
		&=
		\sum_{k=0}^{r}
		\sum_{0\le a\le b\le c\le1}
		\ps_{1}^{1}\!\left(s_{(3^{a},2^{b-a},1^{c-b})}\right) \cdot
		\\
		&
		\ps_{n-1}^{1}
		\!\Big(
		s_{(3^{k-c},2^{c-b},1^{b-a})}
		\big(
		s_{(3^{r-k})}
		+s_{(3^{r-k-1},2)}
		+s_{(3^{r-k-1},1)}
		+s_{(3^{r-k-1})}
		\big)
		\Big).
	\end{align*}
 So $C_2(r)$ can be decomposed into four sums:
	\begin{align*}
		D_1(r)&=\sum_{k=0}^{r}
		\sum_{0\le a\le b\le c\le1}
		\ps_{1}^{1}\!\left(s_{(3^{a},2^{b-a},1^{c-b})}\right)
		\ps_{n-1}^{1} \Big(
		s_{(3^{k-c},2^{c-b},1^{b-a})}
		\,
		s_{(3^{r-k})}
		\Big),\\
		D_2(r)&=\sum_{k=0}^{r}
		\sum_{0\le a\le b\le c\le1}
		\ps_{1}^{1}\!\left(s_{(3^{a},2^{b-a},1^{c-b})}\right)
		\ps_{n-1}^{1} \Big(
		s_{(3^{k-c},2^{c-b},1^{b-a})}
		\,
		s_{(3^{r-k-1},2)}
		\Big),\\
		D_3(r)&=\sum_{k=0}^{r}
		\sum_{0\le a\le b\le c\le1}
		\ps_{1}^{1}\!\left(s_{(3^{a},2^{b-a},1^{c-b})}\right)
		\ps_{n-1}^{1} \Big(
		s_{(3^{k-c},2^{c-b},1^{b-a})}
		\,
		s_{(3^{r-k-1},1)}
		\Big),\\
		D_4(r)&=\sum_{k=0}^{r}
		\sum_{0\le a\le b\le c\le1}
		\ps_{1}^{1}\!\left(s_{(3^{a},2^{b-a},1^{c-b})}\right)
		\ps_{n-1}^{1} \Big(
		s_{(3^{k-c},2^{c-b},1^{b-a})}
		\,
		s_{(3^{r-k-1})}
		\Big).
	\end{align*}
	Hence, $$C_1(r)-C_2(r)=\sum_{i=1}^8A_i(r)-\sum_{i=1}^4D_i(r).$$
	
	\medskip
    \noindent
	\textbf{Simplification of $C_1(r)-C_2(r)$}\\
	We use $A_i$ (resp. $D_i$) to denote $A_i(r)$ (resp. $D_i(r)$) for brevity.
    All additional boundary terms vanish by
    our convention that a Schur function indexed by an invalid partition
    is zero.
	We divide the simplification into five cases: (i) $A_1=D_1$; (ii) $A_8=D_4$;
	\begin{align*}
		&\textrm{(iii)} \quad A_2-D_2
		=\sum_{k=0}^{r}\ps_{n-1}^1\Big(
		s_{(3^{k-2},2^2)} \, s_{(3^{r-k})}
		+s_{(3^{k-2},2,1)}\, s_{(3^{r-k})}\\
		&\qquad \qquad \qquad \qquad -s_{(3^{k-1},2)}\,s_{(3^{r-k-1},2)}
		-s_{(3^{k-1},1)}\, s_{(3^{r-k-1},2)}
		\Big);\\
		&\textrm{(iv)} \quad A_3+A_5-D_3=\sum_{k=0}^{r}\ps_{n-1}^1\Big(
		s_{(3^{k-1},1)}\, s_{(3^{r-k})}
		+s_{(3^{k-2},2,1)}\, s_{(3^{r-k})}
		+s_{(3^{k-2},1^2)}\, s_{(3^{r-k})}\\
		&\qquad \qquad \qquad \qquad \qquad -s_{(3^{k-1},2)}\, s_{(3^{r-k-1},1)}
		-s_{(3^{k-1},1)}\, s_{(3^{r-k-1},1)}
		\Big);\\
		&\textrm{(v)} \quad A_4+A_6+A_7
		=\sum_{k=0}^{r}
		\ps_{n-1}^1
		\Big(
		2\,s_{(3^{k-1})}\, s_{(3^{r-k})}
		+s_{(3^{k-2},2)}\, s_{(3^{r-k})}\Big).
	\end{align*}
	Summarizing all five cases, and by the linearity of $\ps_{n-1}^1$,
	we derive that $C_1(r)-C_2(r)$ equals
	\begin{align*}
		& \ps_{n-1}^1
		\sum_{k=0}^{r}\Bigg[
		\Big(
		s_{(3^{k-2},2^2)}
		+2\,s_{(3^{k-2},2,1)}
		+s_{(3^{k-1},1)}
		+s_{(3^{k-2},1^2)}
		+2\,s_{(3^{k-1})}
		+s_{(3^{k-2},2)}
		\Big)\cdot s_{(3^{r-k})}\\
		&\qquad-
		\Big(
		s_{(3^{k-1},2)}+s_{(3^{k-1},1)}
		\Big)\cdot
		\Big(
		s_{(3^{r-k-1},2)}+s_{(3^{r-k-1},1)}
		\Big)
		\Bigg].
	\end{align*}
	This proves~\eqref{c1-c2}. 
	We give the details for (i) and (ii). For readability, the remaining cases (iii)--(v) are deferred to Appendix A.
	Recall that 
	$$
	\ps_1^1(s_{\lambda}) = \begin{cases}
		1, & \textrm{if} \ \lambda=\emptyset \ \textrm{or}\  \ell(\lambda)=1, \\
		0, & \textrm{otherwise}.
	\end{cases}
	$$
	\begin{itemize}
		\item[(i)] Proof that $A_1=D_1$. 
		Clearly, $\ps_1^1(s_{(3^a,2^{b-a},1^{c-b})})=0$ for $c=2,$ since
		$\ell(3^a,2^{b-a},1^{c-b})=c$. Then
		\begin{align*}
			A_1=&\sum_{k=0}^{r}\sum_{0\le a\le b\le c\le2}
			\ps_{1}^{1}
			\left(
			s_{(3^a,2^{b-a},1^{c-b})}
			\right)
			\ps_{n-1}^{1}
			\!\left(
			s_{(3^{k-c},2^{c-b},1^{b-a})}
			\, s_{(3^{r-k})}
			\right)\\
			=&\sum_{k=0}^{r}\sum_{0\le a\le b\le c\le1}
			\ps_{1}^{1}
			\left(
			s_{(3^a,2^{b-a},1^{c-b})}
			\right)
			\ps_{n-1}^{1}
			\!\left(
			s_{(3^{k-c},2^{c-b},1^{b-a})}
			\, s_{(3^{r-k})}
			\right)=D_1.
		\end{align*}
		\item[(ii)]  Proof that $A_8=D_4$.
		Clearly, $\ps_1^1(s_{(3^{a-1},2^{b-a},1^{c-b})})=0$ for $a<1.$
		Then
		\begin{align*}
			A_8=&\sum_{k=0}^{r}\sum_{0\le a\le b\le c\le2}
			\ps_{1}^{1}
			\left(
			s_{(3^{a-1},2^{b-a},1^{c-b})}
			\right)
			\ps_{n-1}^{1}
			\!\left(
			s_{(3^{k-c},2^{c-b},1^{b-a})}
			\, s_{(3^{r-k})}
			\right)\\
			=&\sum_{k=0}^{r}\sum_{1\le a\le b\le c\le2}
			\ps_{1}^{1}
			\left(
			s_{(3^{a-1},2^{b-a},1^{c-b})}
			\right)
			\ps_{n-1}^{1}
			\!\left(
			s_{(3^{k-c},2^{c-b},1^{b-a})}
			\, s_{(3^{r-k})}
			\right)\\
			=&\sum_{k=0}^{r}\sum_{0\le a\le b\le c\le 1}
			\ps_{1}^{1}
			\left(
			s_{(3^{a},2^{b-a},1^{c-b})}
			\right)
			\ps_{n-1}^{1}
			\!\left(
			s_{(3^{k-c-1},2^{c-b},1^{b-a})}
			\, s_{(3^{r-k})}
			\right)\\
			=&\sum_{k=0}^{r-1}\sum_{0\le a\le b\le c\le 1}
			\ps_{1}^{1}
			\left(
			s_{(3^{a},2^{b-a},1^{c-b})}
			\right)
			\ps_{n-1}^{1}
			\!\left(
			s_{(3^{k-c},2^{c-b},1^{b-a})}
			\, s_{(3^{r-k-1})}
			\right)=D_4.
		\end{align*}
	\end{itemize}
\end{proof}
\subsection{Proof of Theorem~\ref{thm:main}}
In this subsection, we prove the Schur positivity of 
$\sum_{k=0}^{r}F(r,k)$,
where
\begin{align*}
	F(r,k)={}&\Bigl(
	s_{(3^{k-2},2^2)}+2\,s_{(3^{k-2},2,1)}+s_{(3^{k-2},1^2)}
	+s_{(3^{k-2},2)}+s_{(3^{k-1},1)}+2\,s_{(3^{k-1})}
	\Bigr)\cdot s_{(3^{r-k})}\\
	&-\Bigl(s_{(3^{k-1},2)}+s_{(3^{k-1},1)}\Bigr)\cdot
	\Bigl(s_{(3^{r-k-1},2)}+s_{(3^{r-k-1},1)}\Bigr).
\end{align*}
We will prove the Schur positivity of the standard involution $\omega\left(\sum_{k=0}^{r}F(r,k)\right)$,
which,  by Lemma~\ref{SPOMEGA}, implies the Schur positivity of $\sum_{k=0}^{r}F(r,k)$.

Note that every term in $\omega(F(r,k))$ is the product of two Schur functions,
and the length of the partition of every Schur function in $\omega(F(r,k))$ is at most $3$.
Hence, by the Littlewood--Richardson rule,
the length of the partition of every term in $\omega(F(r,k))$ is at most $6$.
We first prove the Schur positivity of a part of $\omega\left(\sum_{k=0}^{r}F(r,k)\right)$,
that is, the terms whose corresponding partition has length at most $5$.
Theorems~\ref{thm:base-pf} and~\ref{thm:base-pf22} play a key role in the proof of Schur positivity.
Finally, relating the general terms in $\omega(F(r,k))$ and the terms whose corresponding partition has length at most $5$,
we derive the Schur positivity of $\omega\left(\sum_{k=0}^{r}F(r,k)\right)$, and Theorem~\ref{thm:main} follows.


It is not difficult to verify that
\begin{align*}
	\omega(F(r,k))=&\Big(s_{(k,k,k-2)}+2\,s_{(k,k-1,k-2)}+s_{(k,k-2,k-2)}+s_{(k-1,k-1,k-2)}\\
	&+s_{(k,k-1,k-1)}+2\,s_{(k-1,k-1,k-1)}\Big)\cdot s_{(r-k,r-k,r-k)}\\
	&-\left(s_{(k,k,k-1)}+s_{(k,k-1,k-1)}\right)\cdot\left(s_{(r-k,r-k,r-k-1)}+s_{(r-k,r-k-1,r-k-1)}\right),
\end{align*}
and 
\begin{align}
\omega\left(\sum_{k=0}^{r}F(r,k)\right)=
&\sum_{k=0}^{r}\Big[\left(s_{(k,k,k-2)}+s_{(k,k-1,k-1)}\right)+2\,\left(s_{(k,k-1,k-2)}+s_{(k-1,k-1,k-1)}\right)\nonumber\\
	&\qquad+\left(s_{(k,k-2,k-2)}+s_{(k-1,k-1,k-2)}\right)\Big]\cdot s_{(r-k,r-k,r-k)}\nonumber\\
	&-\sum_{k=0}^{r}\Big(s_{(k,k,k-1)}\, s_{(r-k,r-k,r-k-1)}+2\,s_{(k,k,k-1)} \, s_{(r-k,r-k-1,r-k-1)}\nonumber\\
	&\qquad+s_{(k,k-1,k-1)}\, s_{(r-k,r-k-1,r-k-1)} \Big).\label{omega1}
\end{align}
Let $\omega\left(\sum_{k=0}^{r}F(r,k)\right)=F_1(r)+2\,F_2(r)+F_3(r)$,
where
\begin{align*}
	F_1(r)&=\sum_{k=0}^{r}
	\left[\left(s_{(k,k,k-2)}+s_{(k,k-1,k-1)}\right)\cdot s_{(r-k,r-k,r-k)}-s_{(k,k,k-1)}\cdot s_{(r-k,r-k,r-k-1)}\right],
	\\
	F_2(r)&=\sum_{k=0}^{r}
	\left[\left(s_{(k,k-1,k-2)}+s_{(k-1,k-1,k-1)}\right)\cdot s_{(r-k,r-k,r-k)}-s_{(k,k,k-1)} \cdot s_{(r-k,r-k-1,r-k-1)}\right],
	\\
	F_3(r)&=\sum_{k=0}^{r}
	\left[\left(s_{(k,k-2,k-2)}+s_{(k-1,k-1,k-2)}\right)\cdot s_{(r-k,r-k,r-k)}-s_{(k,k-1,k-1)}\cdot s_{(r-k,r-k-1,r-k-1)}\right].
\end{align*}
We shall prove the Schur positivity of $F_1(r), F_2(r), F_3(r)$,
which implies the Schur positivity of $\omega\left(\sum_{k=0}^{r}F(r,k)\right)$.

\subsubsection{The terms of length at most five}
\label{subsec:<=5}
Clearly, for every factor $s_{\la}$ in $\omega\left(\sum_{k=0}^{r}F(r,k)\right)$, we have $\ell\left({\la}\right)\le 3$.
By the Littlewood--Richardson rule, for every term $s_{\mu} s_{\nu}$, 
the length of each partition in its Littlewood--Richardson expansion is at most $6$.
Suppose that for $i=1,2,3$,
$$
F_i(r)=\sum_{\lambda} a_{\lambda} s_{\lambda}.
$$
Hence, the sum is over $\la$ with $\ell(\la)\le 6.$
Define 
$$
F_i(r)\big|_{\ell\leq5}=\sum_{\substack{\la \\\ell(\la)\le 5}} a_{\lambda} s_{\lambda}.
$$
Then for $X=\{x_1,\ldots,x_5\}$,
\[
F_i(r)\big|_{\ell\le5}(X)=F_i(r)(X).
\]
In this subsection, we present the Schur positivity of $F_i(r)\big|_{\ell\leq5}$ for $i=1,2,3$.
We introduce some notation.
Let $\pi_5=x_1x_2x_3x_4 x_5$.  Suppose that
$\lambda=(\lambda_1,\la_2,\la_3,\la_4,\la_5)\subseteq(a^5)$.  
Define the complement of $\la$ as 
\begin{equation}\label{eq-com}
	\lambda^{\vee,a}=(a-\lambda_5,a-\lambda_4,\dots,a-\lambda_1), 
\end{equation}
and
$
s^{\vee,a}_{\la}:=s_{\la^{\vee,a}}.
$
Note that $s^{\vee,a}_{\la}+s^{\vee,a}_{\tau}=(s_\la+s_\tau)^{\vee,a}$ if $\la,\tau\subseteq(a^5)$.
Define 
$$
f^{\vee,a}= \sum_\la a_\la s^{\vee,a}_\la
$$ 
if the expansion $f=\sum_\la a_\la s_\la$ with all $\la \subseteq(a^5)$.
\begin{lemma}{\rm \cite[p. 457]{Sta24}}
	\label{lem:complement}
	For $\lambda\subseteq(a^5)$ and $X=\{x_1,\ldots,x_5\}$, we have 
	\begin{equation}\label{comple}
		s_\lambda(X)=\pi_5^a
		s^{\vee,a}_{\lambda}(x_1^{-1},\dots,x_5^{-1})=\pi_5^a s^{\vee,a}_{\lambda}(X^{-1}).
	\end{equation}
\end{lemma}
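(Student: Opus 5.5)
We would prove \eqref{comple} directly from the bialternant formula for Schur polynomials in five variables. For $X=\{x_1,\ldots,x_5\}$ and $\lambda$ with at most five parts,
\[
s_\lambda(X)=\frac{\det\bigl(x_i^{\lambda_j+5-j}\bigr)_{1\le i,j\le5}}{\det\bigl(x_i^{5-j}\bigr)_{1\le i,j\le5}}.
\]
The idea is to substitute $x_i\mapsto x_i^{-1}$ in the same formula for $s_{\lambda^{\vee,a}}$ and clear the negative powers by pulling out suitable powers of $\pi_5$. This gives a short route that avoids tableau combinatorics. An alternative would be the bijection that complements each column of a semistandard tableau inside $[5]$ and rotates by $180^\circ$; that would also work but needs more care.

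\textbf{Numerator.} Write $\mu=\lambda^{\vee,a}$, so $\mu_j=a-\lambda_{6-j}$. The numerator of $s_\mu(X^{-1})$ has entries $x_i^{-(a-\lambda_{6-j}+5-j)}$. Reindexing the columns by $j\mapsto 6-j$ turns the exponent into $-(a+\lambda_j+j-1)$, at the cost of the sign $\varepsilon$ of the order-reversing permutation of five columns. Factoring $x_i^{-(a+4)}$ out of row $i$ leaves entries $x_i^{\lambda_j+5-j}$, since
\[
-(a+\lambda_j+j-1)+(a+4)=\lambda_j+5-j.
\]
So the numerator equals $\varepsilon\,\pi_5^{-(a+4)}\det\bigl(x_i^{\lambda_j+5-j}\bigr)$.

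\textbf{Denominator.} The same manipulation is the case $\lambda=\varnothing$, $a=0$. It gives the Vandermonde determinant in $X^{-1}$ as $\varepsilon\,\pi_5^{-4}\det\bigl(x_i^{5-j}\bigr)$.

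\textbf{Conclusion.} Taking the quotient, the signs $\varepsilon$ cancel and
\[
s_{\lambda^{\vee,a}}(X^{-1})=\pi_5^{-a}\,s_\lambda(X).
\]
This is exactly \eqref{comple}.

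The only point needing care is the bookkeeping: the sign from reversing the columns, and checking that the factored power $a+4$ matches for every column. Both are routine. We also note that the hypothesis $\lambda\subseteq(a^5)$ is used only to guarantee that $\lambda^{\vee,a}$ is a genuine partition, so that $s_{\lambda^{\vee,a}}$ is defined.
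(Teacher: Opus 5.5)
Your argument is correct, and it is a complete proof. The paper does not prove this lemma; it only cites Stanley, so your bialternant computation provides a self-contained argument where the paper relies on a reference.

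There is one sign slip to fix. After reindexing $j\mapsto 6-j$, the exponent $-(a-\lambda_{6-j}+5-j)$ becomes $-(a-\lambda_j+j-1)$, not $-(a+\lambda_j+j-1)$. As written, your check fails, since $-(a+\lambda_j+j-1)+(a+4)=5-j-\lambda_j$. With the corrected sign, $-(a-\lambda_j+j-1)+(a+4)=\lambda_j+5-j$, so factoring $x_i^{-(a+4)}$ out of row $i$ gives exactly the numerator of $s_\lambda(X)$. The rest goes through as you say. The reversal sign is in fact $\varepsilon=(-1)^{\binom{5}{2}}=1$, though you only need that it cancels. The substitution $x_i\mapsto x_i^{-1}$ is legitimate because the bialternant formula is an identity of rational functions.

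Compared with the tableau route you mention, the determinant calculation is shorter and needs no combinatorial verification. The column-complement bijection would give a finer, weight-reversing correspondence between tableaux of shapes $\lambda$ and $\lambda^{\vee,a}$ with entries in $[5]$. Its nontrivial step is proving that complementing columns inside $[5]$ and reversing their order preserves weak increase along rows.
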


We use $s_\mu s_\nu$ to denote the term in $F_i(r)$, where $s_\mu$ denotes the first factor 
and $s_\nu$ denotes the second.
Clearly, for every term $s_\mu s_\nu$  with $\mu=(\mu_1,\mu_2,\mu_3)$ and $\nu=(\nu_1,\nu_2,\nu_3)$,
we have $\mu_1\le k$ and $\nu_1\le r-k.$ 
Thus the partition of  first factor $\mu \subseteq(k^5)$
and the partition of second factor $\nu\subseteq((r-k)^5)$.
We list the required complements of the first factor:
\begin{align*}
	&s^{\vee,k}_{(k,k,k)}=s_{(k,k)},\quad
	s^{\vee,k}_{(k,k,k-1)}=s_{(k,k,1)},\quad
	s^{\vee,k}_{(k,k-1,k-1)}=s_{(k,k,1,1)},\\
	&s^{\vee,k}_{(k,k,k-2)}=s_{(k,k,2)},\quad
	s^{\vee,k}_{(k,k-1,k-2)}=s_{(k,k,2,1)},\quad
	s^{\vee,k}_{(k,k-2,k-2)}=s_{(k,k,2,2)},\\
	&s^{\vee,k}_{(k-1,k-1,k-2)}=s_{(k,k,2,1,1)},\quad
	s^{\vee,k}_{(k-1,k-1,k-1)}=s_{(k,k,1,1,1)}.
\end{align*}
\begin{lemma}\label{slsm}
	Let $s_\mu s_\nu$ be a product occurring in $F_i(r)$ for $i=1,2,3,$
	where $s_\mu$ is indexed by a partition contained in $(k^5)$
	and $s_\nu$ is indexed by a partition contained in $((r-k)^5)$.
	After specializing to $X=\{x_1,\ldots,x_5\}$, we have
	\[
	\bigl(s_\mu s_\nu\bigr)^{\vee,r}
	=
	s_\mu^{\vee,k}s_\nu^{\vee,r-k}.
	\]
\end{lemma}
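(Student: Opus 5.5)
The proof will reduce to Lemma~\ref{lem:complement} applied three times, together with the multiplicativity of evaluation at $X^{-1}$. First, we make precise what $(s_\mu s_\nu)^{\vee,r}$ means. Since $\mu\subseteq(k^5)$ and $\nu\subseteq((r-k)^5)$, every $\lambda$ with $c^{\lambda}_{\mu\nu}\neq 0$ satisfies $\lambda_1\le\mu_1+\nu_1\le r$. After specializing to five variables, only $\lambda$ with $\ell(\lambda)\le5$ survive. Hence the five-variable expansion $s_\mu s_\nu=\sum_\lambda c^\lambda_{\mu\nu}s_\lambda$ has all $\lambda\subseteq(r^5)$, and by definition
\[
(s_\mu s_\nu)^{\vee,r}=\sum_{\lambda}c^\lambda_{\mu\nu}s_{\lambda^{\vee,r}}
\]
is well defined.

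Second, we evaluate both sides at $X^{-1}$ and compare with $\pi_5$-multiples. By Lemma~\ref{lem:complement} applied to each $\lambda\subseteq(r^5)$,
\[
\pi_5^r\,(s_\mu s_\nu)^{\vee,r}(X^{-1})=\sum_\lambda c^\lambda_{\mu\nu}\,\pi_5^r s_{\lambda^{\vee,r}}(X^{-1})=\sum_\lambda c^\lambda_{\mu\nu}s_\lambda(X)=s_\mu(X)s_\nu(X).
\]
On the other hand, Lemma~\ref{lem:complement} applied separately with $a=k$ and $a=r-k$ gives
\[
s_\mu(X)s_\nu(X)=\pi_5^k s^{\vee,k}_\mu(X^{-1})\cdot\pi_5^{r-k}s^{\vee,r-k}_\nu(X^{-1})=\pi_5^r\bigl(s^{\vee,k}_\mu s^{\vee,r-k}_\nu\bigr)(X^{-1}).
\]

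Third, we cancel $\pi_5^r$. This yields the equality of the two polynomials evaluated at $x_i^{-1}$, that is, the equality of Laurent polynomials after the substitution $x_i\mapsto x_i^{-1}$. Since this substitution is an automorphism of $\mathbb{Z}[x_1^{\pm1},\ldots,x_5^{\pm1}]$, we conclude $(s_\mu s_\nu)^{\vee,r}=s^{\vee,k}_\mu s^{\vee,r-k}_\nu$ in five variables.

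The only delicate point is the first step: we must check that all constituents fit in the $(r^5)$ box, so that the complement is defined termwise and Lemma~\ref{lem:complement} applies to each of them. This follows from the Littlewood--Richardson bound $\lambda_1\le\mu_1+\nu_1$ together with the vanishing of $s_\lambda$ in five variables when $\ell(\lambda)>5$. Apart from that, the argument is formal.
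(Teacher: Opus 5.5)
Your proposal is correct and is essentially the paper's proof. The Littlewood--Richardson bound $\lambda_1\le\mu_1+\nu_1\le r$, together with $\ell(\lambda)\le 5$ in five variables, places every constituent in $(r^5)$; Lemma~\ref{lem:complement} is then applied termwise and to each factor, and both resulting expressions equal $s_\mu(X)s_\nu(X)$. Your remark that $x_i\mapsto x_i^{-1}$ is an automorphism of $\mathbb{Z}[x_1^{\pm1},\ldots,x_5^{\pm1}]$ simply spells out the paper's final ``compare the two expressions'' step.
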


\begin{proof}
	By the Littlewood--Richardson rule, after specializing to
	$X=\{x_1,\ldots,x_5\}$, we have
	\[
	s_\mu(X)s_\nu(X)
	=
	\sum_{\substack{\lambda\\ \ell(\lambda)\le5}}
	c_{\mu,\nu}^{\lambda}s_\lambda(X),
	\]
	since $s_\lambda(X)=0$ whenever $\ell(\lambda)>5$.
	Moreover, $\lambda_1\le \mu_1+\nu_1\le r$, and hence
	$\lambda\subseteq(r^5)$.
	Applying~\eqref{comple} to $s_\la(X),$ 
	\begin{align*}
		(s_\mu s_\nu)(X)&=\sum_{\substack{\la}}c_{\mu,\nu}^{\la}s_\la(X)=\sum_{\substack{\la}}c_{\mu,\nu}^{\la}\pi_5^r s^{\vee,r}_{\la}(X^{-1})\\
		&=\pi_5^r\left(\sum_{\substack{\la}}c_{\mu,\nu}^{\la} s^{\vee,r}_{\la}\right)(X^{-1})=\pi_5^r\left(\sum_{\substack{\la}}c_{\mu,\nu}^{\la} s_{\la}\right)^{\vee,r}(X^{-1})=\pi_5^r \left(s_{\mu} s_\nu\right)^{\vee,r}(X^{-1}).
	\end{align*}
	On the other hand, using~\eqref{comple} on $s_\mu(X)$ and $s_\nu(X)$, we have 
	$$
	(s_\mu s_\nu)(X)=s_\mu(X) s_\nu(X)=\pi_5^k s_\mu^{\vee,k}(X^{-1})\cdot \pi_5^{r-k}s_\nu^{\vee,r-k}(X^{-1})=\pi_5^{r}\left(s_\mu^{\vee,k}s_\nu^{\vee,r-k}\right)(X^{-1}).
	$$
	We complete the proof by comparing the two expressions.
\end{proof}

Therefore, by Lemma~\ref{slsm}, we have
\begin{equation*}
	F_i(r)\big|_{\ell\leq5}(X)=\pi_5^r\,G_i(r)(X^{-1}),
\end{equation*}
where $G_i(r)=\left(F_i(r)\big|_{\ell\leq5}\right)^{\vee,r}$ and
\begin{align*}
	G_1(r)&=\sum_{k=0}^{r}\Bigl[
	\bigl(s_{(k,k,2)}+s_{(k,k,1,1)}\bigr)\, s_{(r-k,r-k)}
	-s_{(k,k,1)}\, s_{(r-k,r-k,1)}\Bigr],
    \\
	G_2(r)&=\sum_{k=0}^{r}\Bigl[
	\bigl(s_{(k,k,2,1)}+s_{(k,k,1,1,1)}\bigr)\, s_{(r-k,r-k)}
	-s_{(k,k,1)}\,s_{(r-k,r-k,1,1)}\Bigr],\\
	G_3(r)&=\sum_{k=0}^{r}\Bigl[
	\bigl(s_{(k,k,2,2)}+s_{(k,k,2,1,1)}\bigr)\,s_{(r-k,r-k)}
	-s_{(k,k,1,1)}\,s_{(r-k,r-k,1,1)}\Bigr].
\end{align*}

\begin{theorem}
	The symmetric functions $F_i(r)\big|_{\ell\leq5}$ are Schur positive for $i=1,2,3$.
	In particular,
	\begin{align}
		F_1(r)\big|_{\ell\leq5}
		&=\sum_{j=0}^{\lfloor(r-1)/2\rfloor}
		s_{(r,r-j-1,r-j-1,j,j)},
		\label{G_1}\\
		F_2(r)\big|_{\ell\leq5}
		&=\sum_{j=0}^{\lfloor(r-1)/2\rfloor}
		s_{(r-1,r-j-1,r-j-1,j,j)},
		\label{G_2}\\
		F_3(r)\big|_{\ell\leq5}
		&=\sum_{j=1}^{\lfloor(r-1)/2\rfloor}
		s_{(r-2,r-j-1,r-j-1,j,j)}.
		\label{G_3}
	\end{align}
\end{theorem}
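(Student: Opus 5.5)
The plan is to pass to the complemented functions $G_i(r)$, since $F_i(r)\big|_{\ell\le5}(X)=\pi_5^r\,G_i(r)(X^{-1})$. I will compute each $G_i(r)$ explicitly as a Schur expansion in the five variables $X=\{x_1,\dots,x_5\}$, and then complement back with respect to $(r^5)$ using Lemma~\ref{lem:complement}. The targets are dictated by \eqref{G_1}--\eqref{G_3}. The complement of $(r,r-j-1,r-j-1,j,j)$ in $(r^5)$ is $(r-j,r-j,j+1,j+1)$. The complement of $(r-1,r-j-1,r-j-1,j,j)$ is $(r-j,r-j,j+1,j+1,1)$, and that of $(r-2,r-j-1,r-j-1,j,j)$ is $(r-j,r-j,j+1,j+1,2)$. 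So I aim to show, modulo Schur functions of length $>5$ (which vanish on $X$), that
\begin{align*}
\sum_{r}G_1(r)t^r&\equiv\sum_{0\le b\le a}s_{(a+1,a+1,b+1,b+1)}t^{a+b+1},\\
\sum_{r}G_2(r)t^r&\equiv\sum_{0\le b\le a}s_{(a+1,a+1,b+1,b+1,1)}t^{a+b+1},\\
\sum_{r}G_3(r)t^r&\equiv\sum_{0\le b\le a}s_{(a+2,a+2,b+2,b+2,2)}t^{a+b+3}.
\end{align*}
These are exactly the shapes produced by Theorem~\ref{thm:base-pf}, \eqref{eq:base-pf2} and \eqref{jr3}, up to a shift in $t$; for $G_3$ the substitution $b+2=j+1$ accounts for the range $j\ge1$.

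By convolution, $\sum_r G_1(r)t^r=(\Zb+\Zc)\Ya-\Za^2$. The right-hand sides of the Pfaffian identities, however, involve only the two-row series $\Ya,\Yb,\Yc,\Zf$, plus the special combination appearing in \eqref{jr3}. I therefore rewrite every series with a third row using Pieri's rule (Theorem~\ref{h-e}). From $s_{(k,k)}e_1=s_{(k+1,k)}+s_{(k,k,1)}$ I get $\Za=e_1\Ya-\Yb$. From $s_{(k,k)}h_2=s_{(k+2,k)}+s_{(k+1,k,1)}+s_{(k,k,2)}$ and $s_{(k,k)}e_2=s_{(k+1,k+1)}+s_{(k+1,k,1)}+s_{(k,k,1,1)}$, together with $\sum_k s_{(k+1,k+1)}t^k=(\Ya-1)/t$ and $\Zd=e_1\Yb-\Yc-(\Ya-1)/t$, I can express $\Zb+\Zc$ through $\Ya,\Yb,\Yc$ and $e_1,e_2,h_2$.

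Substituting these expressions, $(\Zb+\Zc)\Ya-\Za^2$ should collapse to a combination of $t(\Ya\Yc-\Yb^2)+\Ya^2$ and $e_1$ times it, plus the left side of \eqref{eq:base-pf2}. In other words, $G_1$ should essentially equal $e_1\sum s_{(a,a,b,b)}t^{a+b}$ minus $e_1$ times that series minus the left side of \eqref{eq:base-pf2}, leaving the single positive sum over shapes $(a+1,a+1,b+1,b+1,1)$, or the appropriate variant. I will do the same for $G_2$, expressing $S_{(k,k,2,1)}$, $S_{(k,k,1,1,1)}$ and $S_{(k,k,1,1)}$ via $e_1,e_2,e_3$-Pieri, and for $G_3$, where the target is literally \eqref{jr3} once $\Zh$ is eliminated. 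Throughout, terms of length $6$ created by Pieri (e.g.\ $e_3 s_{(k,k,1,1,1)}$) are discarded since they vanish on $X$. Once the generating function identities hold, I extract $[t^r]$, complement with respect to $(r^5)$, and invoke Lemma~\ref{slsm} to obtain \eqref{G_1}--\eqref{G_3}, which are manifestly Schur positive.

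The main obstacle is the bookkeeping in the Pieri reduction, especially for $G_2$ and $G_3$. There the products mix series whose lengths reach $5$, so one must track carefully which terms of length $6$ may be dropped and how the boundary corrections at small $k$ (the $-1$, $-e_1$, $-e_2t$ corrections, as in the entries of $M$ in the proof of \eqref{jr3}) cancel. If the direct reduction of $G_1$ does not close up, I would instead identify the $G_i$ directly with suitable Pfaffians $\Pf(\widetilde M)$ from Corollary~\ref{thm:augmented} with $p=2,3,4$, mirroring how \eqref{eq:base-pf3} was obtained.
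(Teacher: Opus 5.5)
Your route matches the paper's. You form $\mathcal G_i(t)=\sum_r G_i(r)t^r$, reduce by Pieri to the series in \eqref{eq:base-pf}, \eqref{eq:base-pf2}, \eqref{jr3}, extract $[t^r]$, and complement back via Lemma~\ref{slsm}. Your three target generating functions are correct, but two steps of the execution are off.

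\textbf{The collapse for $G_1$.} The collapse you predict is wrong and contradicts your own target. No $e_1$-multiple of \eqref{eq:base-pf} and no \eqref{eq:base-pf2} survive, and the shapes $(a+1,a+1,b+1,b+1,1)$ belong to $G_2$. Carrying out your own Pieri substitutions (with $h_2+e_2=e_1^2$) gives $\Zb+\Zc=e_1^2\Ya-2e_1\Yb+\Yc+(\Ya-1)/t$. Together with $\Za^2=(e_1\Ya-\Yb)^2$, this yields
\[
t\,\mathcal G_1=t\left(\Ya\Yc-\Yb^2\right)+\Ya^2-\Ya=\sum_{1\le b\le a}s_{(a,a,b,b)}\,t^{a+b}
\]
by \eqref{eq:base-pf} alone; subtracting $\Ya$ removes exactly the $b=0$ terms. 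The structure ``$e_1$ times \eqref{eq:base-pf} plus the left side of \eqref{eq:base-pf2}'' is what appears for $G_2$. There, $S_{(k,k,2,1)}+S_{(k,k,1,1,1)}=e_1\Zc-\Ze$ and $e_1\Ya-\Za=\Yb$ give $\mathcal G_2=\Zc\Yb-\Ze\Ya$, followed by $e_2$-Pieri for $\Zc$ and $\Ze$.

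\textbf{The step for $G_3$.} The phrase ``once $\Zh$ is eliminated'' hides the one non-routine step, and $e_1,e_2,e_3$-Pieri is not what accomplishes it. What you need is the identity $\Zh\,\Ya=\Za\,S_{(k,k,1,1,1)}$ on $X=\{x_1,\dots,x_5\}$. It holds because $e_5s_\lambda=s_{\lambda+(1^5)}$ there, so $\Zh=te_5\Za$ and $S_{(k,k,1,1,1)}=te_5\Ya$. Only with this identity is $\mathcal G_3=(\Zg+\Zh)\Ya-\Zc^2$ literally the left side of \eqref{jr3}. This is the only place in computing the $\mathcal G_i$ where the five-variable restriction is actually used; the identities for $\mathcal G_1$ and $\mathcal G_2$ hold in the full ring. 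You should therefore state it explicitly rather than fold it into generic Pieri bookkeeping.
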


\begin{proof}
	We now give the Schur expansion of $G_i(r)$ for $i=1,2,3$:
	\begin{align*}
		G_1(r)&=
		\sum_{j=0}^{\lfloor(r-1)/2\rfloor}
		s_{(r-j,r-j,j+1,j+1)}.\\
		G_2(r)&=\sum_{j=0}^{\lfloor(r-1)/2\rfloor}
		s_{(r-j,r-j,j+1,j+1,1)}.\\
		G_3(r)&=\sum_{j=1}^{\lfloor(r-1)/2\rfloor}
		s_{(r-j,r-j,j+1,j+1,2)}.
	\end{align*}
	Since $\left(F_i(r)\big|_{\ell\leq5}\right)^{\vee,r}=G_i(r)$,
	the Schur expansions of $F_i(r)\big|_{\ell\leq5}$ follow by~\eqref{eq-com}.
    
    \noindent
    \medskip
	(i) \textbf{Schur expansion of $G_1(r)$.}
	Let 
	$$
	\mathcal{G}_1(t)=\sum_{r\ge 0}G_1(r) t^r
	=\sum_{r\ge 0}\sum_{k=0}^{r}\Bigl[
	\bigl(s_{(k,k,2)}+s_{(k,k,1,1)}\bigr)\,s_{(r-k,r-k)}
	-s_{(k,k,1)}\,s_{(r-k,r-k,1)}\Bigr]t^r.
	$$
	Then 
	$$
	\mathcal{G}_1(t)=\left(\Zb+\Zc\right)\, \Ya-\Za^2.
	$$
	
	By Theorem~\ref{h-e}, Pieri's rule gives $h_1\Ya=\Yb+\Za,$
	\begin{align*}
		h_1 \Za=\Zb+\Zc+\Zd, \quad
		h_1 \Yb=\Yc+\frac{\Ya-1}{t}+\Zd.
	\end{align*}
	A direct computation yields
	\begin{align*}
		\mathcal{G}_1(t)&=\left(h_1 \Za-\Zd\right)\, \Ya-\Za^2\\
		&=h_1 \Ya \Za-\Zd \Ya-\Za^2\\
		&=\left(\Yb+\Za\right) \Za-\Zd \Ya-\Za^2\\
		&=\Yb\Za-\Zd \Ya\\
		&=\Yb\left(h_1\Ya-\Yb\right)-\left(h_1 \Yb-\Yc-\frac{\Ya-1}{t}\right)\, \Ya\\
		&=\Ya\Yc-\Yb^2+\frac{\Ya^2-\Ya}{t}.
	\end{align*}
	Consequently‌,
	$$
	t \mathcal{G}_1(t)=t\cdot\left(\Ya\Yc-\Yb^2\right)+\Ya^2-\Ya.
	$$
	By Theorem~\ref{thm:base-pf}, we obtain
	$$
	t\mathcal{G}_1(t)=\sum_{0\le b\le a}s_{(a,a,b,b)}\,t^{a+b}-\Ya=\sum_{0\le b\le a}s_{(a,a,b,b)}\,t^{a+b}-\sum_{k\geq0}s_{(k,k)}\,t^{k}=t\sum_{1\le b\le a}s_{(a,a,b,b)}\,t^{a+b-1}.
	$$
	Taking the coefficient of $t^{r+1}$, and
	setting $b=j+1$ and $a=r-j$.
	Then 
	$$
	G_1(r)=\sum_{1\le j+1\le r-j}s_{(r-j,r-j,j+1,j+1)}=\sum_{j=0}^{\lfloor(r-1)/2\rfloor}
	s_{(r-j,r-j,j+1,j+1)}.
	$$

    \noindent
    \medskip
	(ii) \textbf{Schur expansion of $G_2(r)$.}
	Let
	$$
	\mathcal{G}_2(t)=\sum_{r\ge 0}G_2(r) t^r=\sum_{r\ge 0}\sum_{k=0}^{r}\Bigl[
	\bigl(s_{(k,k,2,1)}+s_{(k,k,1,1,1)}\bigr)\,s_{(r-k,r-k)}
	-s_{(k,k,1)}\,s_{(r-k,r-k,1,1)}\Bigr]t^r.
	$$
	Note that $e_1s_{(k,k,1,1)}=s_{(k+1,k,1,1)}+s_{(k,k,2,1)}+s_{(k,k,1,1,1)}$.
	Then 
	$$
	\mathcal{G}_2(t)=\left(e_1\Zc-\Ze\right) \Ya-\Za \Zc.
	$$
	Theorem~\ref{h-e} gives $e_1\Ya=\Yb+\Za$.
	Then
   $$
   \mathcal{G}_2(t)=\Zc\left(e_1\Ya-\Za\right)-\Ze\Ya
		=\Zc\Yb-\Ze\Ya.
   $$
    By Lemma~\ref{proof_j2}, 
	\begin{align*}
		e_2 \Ya-e_1 \Yb&=\Zc-\Yc,\\
		e_2 \Yb-e_1\Yc&=\Ze+\frac{\Za}{t}-\Zf.
	\end{align*}
    We defer the proof of Lemma~\ref{proof_j2} to Appendix B.
   Substituting $S_{(k,k,1,1)}$ and $S_{(k+1,k,1,1)}$ yields
	\begin{align*}
		&\mathcal{G}_2(t)=\left(e_2 \Ya-e_1 \Yb+\Yc\right) \Yb\\
        &\qquad \qquad-\left(e_2 \Yb-e_1\Yc-\frac{\Za}{t}+\Zf\right) \Ya\\
		&\qquad=-e_1\Yb^2+\Yb\Yc+e_1\Ya\Yc+\frac{\Ya\Za}{t}-\Ya\Zf\\
		&\qquad=-e_1\Yb^2+\Yb\Yc+e_1\Ya\Yc\\
        &\qquad\qquad+\frac{\Ya\left(e_1\Ya-\Yb\right)}{t}-\Ya\Zf.
	\end{align*}
	Thus, $t\mathcal{G}_2(t)$ equals
	\begin{align*}
		e_1 \left[\Ya^2+t\cdot \left(\Ya\Yc-\Yb^2\right)\right]+t\cdot \left(\Yb\Yc- \Ya\Zf\right)-\Ya\Yb.
	\end{align*}
	Combining~\eqref{eq:base-pf} in Theorem~\ref{thm:base-pf} with~\eqref{eq:base-pf2} in Theorem~\ref{thm:base-pf22}, we obtain 
	\begin{equation*}
		t\mathcal{G}_2(t)=
		\sum_{0\le b\le a} s_{(a+1,a+1,b+1,b+1,1)}\,
		t^{a+b+2}.	
	\end{equation*}
	Taking the coefficient of $t^{r+1}$, and
	setting $b=j$, $a=r-j-1$. Then
	$$
	G_2(r)=\sum_{0\le j\le r-j-1}s_{(r-j,r-j,j+1,j+1,1)}=\sum_{j=0}^{\lfloor(r-1)/2\rfloor}
	s_{(r-j,r-j,j+1,j+1,1)}.
	$$

     \noindent
    \medskip
	(iii) \textbf{Schur expansion of $G_3(r)$.}
	Let
	$$
	\mathcal{G}_3(t)=\sum_{r\ge 0}G_3(r) t^r=\sum_{r\ge 0}\sum_{k=0}^{r}\Bigl[
	\bigl(s_{(k,k,2,2)}+s_{(k,k,2,1,1)}\bigr)s_{(r-k,r-k)}
	-s_{(k,k,1,1)}s_{(r-k,r-k,1,1)}\Bigr]\,t^r.
	$$
    Then we have
	$$
	\mathcal{G}_3(t)=\left(\Zg+\Zh\right) \Ya-\Zc^2.
	$$
	Since all identities in this part are considered after
specialization to the five-variable alphabet
$X=\{x_1,\ldots,x_5\}$, we have $s_\lambda(X)=0$
whenever $\ell(\lambda)>5$,
			and
			$$
			e_5s_\lambda=s_{\lambda+(1^5)}.
			$$
	This gives
			$$
			e_5t\cdot \Za
			=\sum_{k\geq0}s_{(k+1,k+1,2,1,1)}\,t^{k+1}=\Zh,
			$$
    and
   $$
			e_5t\cdot \Ya
			=\sum_{k\geq0}s_{(k+1,k+1,1,1,1)}\,t^{k+1}=S_{(k,k,1,1,1)}.
			$$
    Hence,
 $$
 \Zh\Ya=e_5t\cdot S_{(k,k,1)} S_{(k,k)} =S_{(k,k,1)} S_{(k,k,1,1,1)}.
 $$
Thus 
   $$
	\mathcal{G}_3(t)=\Zg \Ya+S_{(k,k,1)} S_{(k,k,1,1,1)}-\Zc^2.
	$$
	By~\eqref{jr3} in Theorem~\ref{thm:base-pf22}, we have
	\begin{equation*}
		\mathcal{G}_3(t)=\sum_{a\geq b\geq0}s_{(a+2,a+2,b+2,b+2,2)}\,t^{a+b+3}.
	\end{equation*}
	Taking the coefficient of $t^{r}$, and
	setting $b=j-1$, $a=r-j-2$. Then
	$$G_3(r)=\sum_{0\le j-1\le r-j-2}s_{(r-j,r-j,j+1,j+1,2)}=\sum_{j=1}^{\lfloor(r-1)/2\rfloor}
	s_{(r-j,r-j,j+1,j+1,2)}.$$
	This completes the proof.
\end{proof}

\subsubsection{The length-six terms}
As stated in Section~\ref{subsec:<=5}, for every term $s_{\mu} s_{\nu}$ in $\omega\left(\sum_{k=0}^{r}F(r,k)\right)$,  
the length of each partition in its Littlewood--Richardson expansion is at most $6$.
In this part, we study the relationship between the Schur expansion of $F_i(r)$
and that of $F_i(r)\big|_{\ell\leq5}$ for $i=1,2,3$.

For a partition
$\la=(\la_1,\ldots,\la_{\ell(\la)})$, let
$$
\la^{-}
=(\la_1-1,\ldots,\la_{\ell(\la)}-1)
$$
denote the partition obtained by deleting the first column of its
Young diagram, with zero parts omitted. 
Recall that $c_{\mu,\nu}^{\lambda}$ is the Littlewood--Richardson
coefficient of $s_\mu s_\nu$.
We introduce a result concerning $c_{\mu,\nu}^{\lambda}$ and $c_{\mu^-,\nu^-}^{\lambda^-}$.

\begin{lemma}{\rm \cite[Lemma A.1]{CGH+24}}
\label{lem:first-column}
	If $c_{\mu,\nu}^{\lambda}>0$ and
	$\ell(\lambda)=\ell(\mu)+\ell(\nu)$, then 
	$$	
	c_{\mu,\nu}^{\lambda}
	=
	c_{\mu^{-},\nu^{-}}^{\lambda^{-}}.
	$$
\end{lemma}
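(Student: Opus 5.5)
I plan to prove Lemma~\ref{lem:first-column} through the Littlewood--Richardson tableau description of $c_{\mu,\nu}^{\lambda}$ recalled after~\eqref{LRcoeff}. The idea is to build a bijection between LR tableaux of shape $\lambda/\mu$ and content $\nu$ and LR tableaux of shape $\lambda^-/\mu^-$ and content $\nu^-$. Set $a=\ell(\mu)$ and $b=\ell(\nu)$, so that $\ell(\lambda)=a+b$.

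\textbf{Step 1: the first column is forced.} Let $T$ be an LR tableau of shape $\lambda/\mu$ and content $\nu$. The first column of $\lambda/\mu$ consists of rows $a+1,\ldots,a+b$, so it has exactly $b$ cells. Its entries increase strictly down the column, and the lattice condition forces every entry in row $i$ of an LR tableau to be at most $i$. So these $b$ entries are distinct values in $\{1,\ldots,b\}$. Hence the first column is filled with $1,2,\ldots,b$ from top to bottom. In particular, for $i\le b$, the cell at $(a+i,1)$ holds $i$, so each letter $1,\ldots,b$ occurs exactly once in that column. The hypothesis $c_{\mu,\nu}^{\lambda}>0$ only ensures that such tableaux exist.

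\textbf{Step 2: the map.} Deleting this column gives a filling $T^-$ of $\lambda^-/\mu^-$ with content $\nu-(1^b)=\nu^-$. To match $\lambda^-/\mu^-$, shift the remaining cells one column to the left; rows $1,\ldots,a$ of $\mu$ also lose their first cell. The rows and columns of $T^-$ are still weakly and strictly increasing, because they are just rows and columns of $T$.

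The key check is that the reverse reading word remains a lattice permutation. Passing from $T$ to $T^-$ removes the last letter of each row $a+i$, and that letter is $i$. Reading each row right to left, we must show that removing a row-final $i$ preserves the lattice condition. I would argue that in $T$, the $i$ at $(a+i,1)$ is read after all letters of rows $<a+i$. Because of the column of distinct letters, the extra $i$'s relative to $i+1$'s are consumed in a controlled way.

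The main obstacle is making this precise. My first attempt is a counting argument. After reading rows $1,\ldots,a+i$ of $T$, each letter $j\le i$ has gained exactly one extra occurrence compared with the corresponding prefix of $T^-$. So the prefix counts of letters $j$ and $j+1$ differ between $T$ and $T^-$ by the same amount, except at $j=i$ in the partial row $a+i+1$. There one must show that $\#i\ge\#(i+1)$ still holds after dropping the $i$ from row $a+i$. I expect this to follow from the column strictness of the cell $(a+i+1,1)$ containing $i+1$. That cell is read last in its row, so within row $a+i+1$ every earlier-read letter exceeds nothing problematic.

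\textbf{Step 3: the inverse map.} Conversely, given an LR tableau of shape $\lambda^-/\mu^-$ and content $\nu^-$, shift it right and insert $1,\ldots,b$ in the first column. Semistandardness follows because the entries of row $a+i$ are at least $i$: in an LR tableau, row $a+i$ of a skew shape with $a+i$ rows above the cells has entries at least $i$ by column strictness from the $\mu$ part. The lattice condition is preserved since adding a row-final $i$ to row $a+i$ only increases the count of $i$ at the end of a prefix. Rows whose length changes correctly mirror this.

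The two maps are mutually inverse, which gives $c_{\mu,\nu}^{\lambda}=c_{\mu^-,\nu^-}^{\lambda^-}$. If Step~2 becomes messy, my fallback is to argue via the skew Schur function instead: $c_{\mu,\nu}^{\lambda}=[s_\nu]s_{\lambda/\mu}$, and Jacobi--Trudi determinants in $b$ variables combined with $e_b s_\kappa=s_{\kappa+(1^b)}$ should give the identity.
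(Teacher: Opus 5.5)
The paper does not prove this lemma; it quotes it from \cite{CGH+24}. The closest argument in the paper is the augmentation in the proof of Lemma~\ref{lem:column-shift}, which is your Step~3 specialized to $\ell(\mu),\ell(\nu)\le 3$. Your strategy is sound: the first column of $\lambda/\mu$ is forced to be $1,\dots,b$, and deleting or reinserting it is a bijection. As written, however, the two lattice-word verifications are not established, and they are the only nontrivial content.

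\textbf{Step 2.} You correctly reduce to showing $\#_i-\#_{i+1}\ge 1$ for every prefix of the reading word of $T$ that ends inside row $a+i+1$ before its last letter. Column strictness is not what gives this. The missing observation is that row $a+i+1$ starts with $i+1$ in column $1$, so by weak increase along rows it contains no letter $i$. Hence $\#_i-\#_{i+1}$ is nonincreasing while that row is read right to left. The lattice condition at its final letter $i+1$ (cell $(a+i+1,1)$) gives $\#_i-\#_{i+1}\ge 1$ just before that letter, and therefore at every earlier point of the row, including the end of row $a+i$. After row $a+b$ the only affected comparison is $\#_b\ge\#_{b+1}=0$, which is trivial.

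\textbf{Step 3.} Your reason, that a new row-final $i$ ``only increases the count of $i$'', is not valid: raising $\#_i$ is exactly what can break $\#_{i-1}\ge\#_i$. The correct argument is cumulative. A prefix ending inside row $a+i$ has gained one copy each of $1,\dots,i-1$, and also of $i$ once the row is complete. So $\#_j-\#_{j+1}$ rises by $1$ for $j=i-1$ (respectively $j=i$) and is otherwise unchanged. This is the $(1,0,0),(1,1,0),(1,1,1)$ bookkeeping in the proof of Lemma~\ref{lem:column-shift}.

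\textbf{Smaller repairs.} In Step~1, the first-column entries lie in $\{1,\dots,b\}$ because the content is $\nu$ with $\ell(\nu)=b$; the row bound only gives entries $\le a+i$ in row $a+i$. In Step~3, the row condition should read as follows. Since $\ell(\mu^-)\le a$, column $1$ of $\lambda^-/\mu^-$ contains every row from $\ell(\mu^-)+1$ through $a+i$ whenever $\lambda_{a+i}\ge 2$. So the entry at $(a+i,1)$ is at least $a+i-\ell(\mu^-)\ge i$.

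\textbf{What you get once repaired.} The bijection uses nothing from $c^{\lambda}_{\mu,\nu}>0$ beyond $\mu\subseteq\lambda$, so it proves the equality unconditionally. That would make the positivity case split in the proof of Lemma~\ref{lem:column-shift} unnecessary.

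Your fallback is also sound, and simpler than you suggest, with no Jacobi--Trudi needed. The same forced-column argument for semistandard tableaux with entries in $\{1,\dots,b\}$ gives $s_{\lambda/\mu}(x_1,\dots,x_b)=x_1\cdots x_b\,s_{\lambda^-/\mu^-}(x_1,\dots,x_b)$. Expand both sides using $c^{\lambda}_{\mu,\nu}=[s_\nu]s_{\lambda/\mu}$ and $e_b s_\kappa=s_{\kappa+(1^b)}$ in $b$ variables, then use linear independence of Schur polynomials in $b$ variables. This yields the lemma with no lattice-word checks at all.
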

Thus, in the maximal-length case, simultaneously deleting the first
column from $\mu$, $\nu$, and $\lambda$ preserves the corresponding
Littlewood--Richardson coefficient.
With a simple augmentation of
Littlewood--Richardson tableaux, we obtain the following
adding-column result.

\begin{lemma}\label{lem:column-shift}
	Suppose that  $\ell(\mu),\ell(\nu)\le 3$ and $\ell(\lambda)\le 6$. Then for every
	$d\geq0$,
	\begin{equation*}
		c_{\mu+(d^3),\nu+(d^3)}^{\lambda+(d^6)}
		=c_{\mu,\nu}^{\lambda}.
	\end{equation*}
\end{lemma}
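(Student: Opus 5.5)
Our plan is to build an explicit bijection between Littlewood--Richardson tableaux of shape $\lambda/\mu$ with content $\nu$ and those of shape $(\lambda+(d^6))/(\mu+(d^3))$ with content $\nu+(d^3)$. By induction on $d$ it suffices to treat $d=1$. Moreover, if $c_{\mu,\nu}^{\lambda}>0$ or $c_{\mu+(1^3),\nu+(1^3)}^{\lambda+(1^6)}>0$, then size considerations force $|\lambda|=|\mu|+|\nu|$.

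We would first reduce to a length-maximal statement. Pad $\mu$, $\nu$ with zeros to length exactly $3$ and $\lambda$ to length $6$. Then $\mu'=\mu+(1^3)$ and $\nu'=\nu+(1^3)$ have length exactly $3$, and $\lambda'=\lambda+(1^6)$ has length exactly $6=\ell(\mu')+\ell(\nu')$. If $c_{\mu',\nu'}^{\lambda'}>0$, Lemma~\ref{lem:first-column} applies directly and gives $c_{\mu',\nu'}^{\lambda'}=c_{\mu'^-,\nu'^-}^{\lambda'^-}=c_{\mu,\nu}^{\lambda}$, since deleting the first column of $\mu+(1^3)$ recovers $\mu$ exactly (zero parts omitted), and similarly for $\nu,\lambda$. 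So the only remaining case is $c_{\mu',\nu'}^{\lambda'}=0$, where we must show $c_{\mu,\nu}^{\lambda}=0$.

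For this converse we would construct the augmentation map explicitly. Given an LR tableau $T$ of shape $\lambda/\mu$ with content $\nu$, shift every cell one column to the right, which gives the shape $(\lambda+(1^6))/(\mu+(1^3))$ minus the new first-column cells in rows $4,5,6$. Fill those cells with $1,2,3$. The column strictness in column $1$ is immediate. In rows $4$--$6$ the new entry $i$ sits left of entries of $T$ in row $3+i$. Lattice-word column strictness of an LR tableau forces the entries in row $3+i$ to be at least $i$ when $\ell(\nu)\le 3$: an entry $j$ in row $r$ of an LR tableau satisfies $j\le r$, and in fact one checks $j\ge r-\ell(\mu)\ge r-3$ by column strictness beneath $\mu$. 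So the rows stay weakly increasing. Column $2$ (old column $1$) remains strictly increasing below the new entries, because the same bound applies there.

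The reverse reading word acquires one extra letter $1$, $2$, $3$ at the end of rows $4,5,6$ respectively. Each is read after that row's other letters, and it increases the count of $i$ right after all of row $3+i$. We would argue this preserves the lattice property: before row $4$, counts of $1$ already dominate, adding a $1$ is harmless; adding a $2$ at the end of row $5$ requires that the running count of $1$'s strictly exceed that of $2$'s there. This last inequality is the main obstacle. We expect to obtain it from the fact that the new $1$ in row $4$ was added earlier and was not matched by any $2$, giving slack of one. Similarly the earlier $2$ gives slack for the $3$.

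This yields an injection, hence $c_{\mu,\nu}^{\lambda}\le c_{\mu',\nu'}^{\lambda'}$, so $c_{\mu',\nu'}^{\lambda'}=0$ forces $c_{\mu,\nu}^{\lambda}=0$, completing both cases.
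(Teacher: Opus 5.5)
Your proposal is correct and follows essentially the same route as the paper. You reduce to $d=1$, invoke Lemma~\ref{lem:first-column} when the augmented coefficient is positive, and otherwise prepend a first column filled with $1,2,3$ in rows $4,5,6$; row weakness then follows from column strictness together with $\ell(\mu)\le 3$, and the lattice condition holds because the added letters contribute $(1,0,0),(1,1,0),(1,1,1)$ cumulatively. Your ``slack of one'' observation is exactly this last check, so the step you flagged as the main obstacle does go through.
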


\begin{proof}
	It suffices to consider $d=1$. 
	Suppose that 
	$c_{\mu+(1^3),\nu+(1^3)}^{\lambda+(1^6)}>0$.
	Then, by Lemma~\ref{lem:first-column},
	$$c_{\mu,\nu}^{\lambda}=c_{\mu+(1^3),\nu+(1^3)}^{\lambda+(1^6)}.$$
	Now we claim that if $c_{\mu+(1^3),\nu+(1^3)}^{\lambda+(1^6)}=0$,
	then $c_{\mu,\nu}^{\lambda}=0$.
	
	Assume, to the contrary, that $c_{\mu,\nu}^{\lambda}>0$, and choose a
	Littlewood--Richardson tableau $T$ of shape
	$\lambda/\mu$ and content $\nu$.
	Add one column to the left of $T$. 
	The skew diagram
	$$
	({\lambda+(1^6)})/({\mu+(1^3)})
	$$
	then has three additional cells, namely the cells in the
	first column of rows $4,5,6$. Fill these cells with
	$1,2,3$, respectively, and denote the resulting tableau
	by $T^+$.
	
	The new first-column entries are strictly increasing.
	Moreover, since $\ell(\mu)\leq 3$, every old entry in
	row $5$ is at least $2$, and every old entry in row
	$6$ is at least $3$. Thus the rows of $T^+$ remain
	weakly increasing.
	In the reverse reading word, the newly inserted entries occur
	at the ends of rows $4,5,6$, respectively. Hence the
	successive additional contributions to the multiplicities
	of $1,2,3$ are
	$
	(1,0,0),\,(1,1,0),\,(1,1,1),
	$
	so the lattice-word inequalities are preserved. Therefore
	$T^+$ is a Littlewood--Richardson tableau of shape
	$({\lambda+(1^6)})/({\mu+(1^3)})$
	and content $\nu+(1^3)$. Consequently,
	$$
	c_{\mu+(1^3),\,\nu+(1^3)}^{\lambda+(1^6)}>0,
	$$
	which is a contradiction. This completes the proof.
\end{proof}

Let
\begin{equation*}
	\lambda=(\lambda_1,\lambda_2,\lambda_3,\lambda_4,\lambda_5,\lambda_6),
\end{equation*}
where $\lambda_6>0$.
For each term $s_\mu s_\nu$ in $F_1(r)$, 
\begin{itemize}
	\item[(i)] If $\ell(\mu)+\ell(\nu) \le 5$, then $[s_\lambda]s_{\mu}s_{\nu}=0$.
	\item[(ii)] If $\ell(\mu)+\ell(\nu)=6$, then, by Lemma~\ref{lem:column-shift}, we have $c^{\lambda}_{\mu,\nu}=c^{\lambda-(1^6)}_{\mu-(1^3),\nu-(1^3)}$, that is, $$[s_\lambda]s_{\mu}s_{\nu}=[s_{\lambda-(1^6)}]s_{\mu-(1^3)}\,s_{\nu-(1^3)}.$$
\end{itemize}
Recall that
\begin{align*}
	F_1(r)=\sum_{k=0}^{r}
	\left[\left(s_{(k,k,k-2)}+s_{(k,k-1,k-1)}\right)\, s_{(r-k,r-k,r-k)}-s_{(k,k,k-1)}\, s_{(r-k,r-k,r-k-1)}\right],
\end{align*}
As an example, consider $\sum_{k=0}^{r} s_{(k,k,k-2)}\, s_{(r-k,r-k,r-k)}$. Then
$$
\sum_{k=0}^{r} s_{(k,k,k-2)}\, s_{(r-k,r-k,r-k)}=\sum_{k=0}^{r} \sum_{\lambda} c_{(k,k,k-2),(r-k,r-k,r-k)}^{\lambda}\, s_{\la}.
$$
Note that 
\begin{align*}
	\sum_{k=0}^{r-2} s_{(k,k,k-2)}\, s_{(r-2-k,r-2-k,r-2-k)}&=\sum_{k=1}^{r-1} s_{(k-1,k-1,k-3)}\, s_{(r-k-1,r-k-1,r-k-1)}\\
	&=\sum_{k=0}^{r} \sum_{\lambda} c_{(k-1,k-1,k-3),(r-k-1,r-k-1,r-k-1)}^{\lambda}\, s_{\la}.
\end{align*}
By Lemma~\ref{lem:column-shift},
$$
c_{(k,k,k-2),(r-k,r-k,r-k)}^{\lambda}=c_{(k-1,k-1,k-3),(r-k-1,r-k-1,r-k-1)}^{\la-(1^6)}.
$$
Then
$$
\left[s_\lambda\right]\sum_{k=0}^{r} s_{(k,k,k-2)}\, s_{(r-k,r-k,r-k)}=\left[s_{\lambda-(1^6)}\right]\sum_{k=0}^{r-2} s_{(k,k,k-2)}\, s_{(r-2-k,r-2-k,r-2-k)}.
$$
The same holds for 
$$s_{(k,k-1,k-1)}\, s_{(r-k,r-k,r-k)} \quad \textrm{and}\quad s_{(k,k,k-1)}\, s_{(r-k,r-k,r-k-1)}.$$
Hence, we obtain
\begin{align}\label{deletcolum}
	\left[s_\lambda\right] F_1(r)=\left[s_{\lambda-(1^6)}\right] F_1(r-2).
\end{align}
Similarly, we have $[s_\lambda] F_i(r)=[s_{\lambda-(1^6)}] F_i(r-2)$ for $i=2,3$.

Let $d=\lambda_6$. Applying \eqref{deletcolum} repeatedly gives
\begin{equation*}
	\left[s_\lambda\right] F_1(r)
	=
	\left[s_{\lambda-(1^6)}\right] F_1(r-2)
	=
	\cdots
	=
	\left[s_{\tilde{\lambda}}\right] F_1(r-2d),
\end{equation*}
where $\tilde{\lambda}=(\lambda_1-d,\lambda_2-d,\lambda_3-d,\lambda_4-d,\lambda_5-d,0)$.
By~\eqref{G_1}, the partitions occurring in $F_1(r-2d)\big|_{\ell\leq5}$ are precisely
$
    (r-2d,r-2d-j-1,r-2d-j-1,j,j)
$
for $0\le j\le \lfloor(r-2d-1)/2\rfloor$, and each occurs with coefficient 1.

On the other hand, from \eqref{deletcolum}, we also obtain 
\begin{equation*}
	\left[s_{\tilde{\lambda}+(1^6)}\right] F_1(r-2d+2)
	=
	\left[s_{\tilde{\lambda}}\right] F_1(r-2d).
\end{equation*}
Therefore, by adding back these $d$ columns of length $6$, 
i.e., adding $d$ to each part, we have 
\begin{equation*}
	F_1(r)
	=\sum_{j=0}^{\lfloor(r-2d-1)/2\rfloor} \sum_{d}
	s_{(r-d,r-d-j-1,r-d-{j}-1,{j}+d,{j}+d,d)}.
\end{equation*}
Setting \(j'=j+d\) and then
renaming \(j'\) as \(j\),  we obtain
\begin{equation}\label{GG1}
	F_1(r)=\sum_{0\le d\le j\le \lfloor(r-1)/2\rfloor}
	s_{(r-d,r-j-1,r-j-1,j,j,d)}.
\end{equation}

Applying the same argument to~\eqref{G_2} and~\eqref{G_3}, respectively,
gives
\begin{equation}\label{GG2}
	F_2(r)=
	\sum_{0\leq d\leq j\leq\lfloor(r-1)/2\rfloor}
	s_{(r-1-d,r-j-1,r-j-1,j,j,d)},
\end{equation}
and
\begin{equation}\label{GG3}
	F_3(r)=
	\sum_{0\leq d < j\leq\lfloor(r-1)/2\rfloor}
	s_{(r-2-d,r-j-1,r-j-1,j,j,d)}.
\end{equation}

\begin{proof}[Proof of Theorem~\ref{thm:main}]
	By~\eqref{GG1},~\eqref{GG2} and~\eqref{GG3}, we obtain that $F_1(r), F_2(r)$ and $F_3(r)$ are Schur positive.
Hence, by~\eqref{omega1}, 
$$
\omega\left(\sum_{k=0}^{r}F(r,k)\right)=F_1(r)+2\,F_2(r)+F_3(r)
$$
	is Schur positive.
	The Schur positivity of $\sum_{k=0}^{r}F(r,k)$ follows by Lemma~\ref{SPOMEGA}.
\end{proof}
\section{Log-convexity preservation and $q$-log-concavity}\label{sec:baxter-transformation}

In this section, we establish two further logarithmic properties
associated with the refined Baxter numbers. We first prove that the
Baxter transformation preserves log-convexity. We then study the
$q$-log-concavity of the $q$-refined Baxter numbers in both the row
and column directions, and extend these results to the
$q$-analogues of the $d$-Hoggatt numbers.

\subsection{The Baxter transformation}
We show that the Baxter transformation preserves log-convexity.
The $q$-log-convexity of a polynomial sequence is closely related to 
the preservation of log-convexity by linear transformations.
For a double-indexed sequence $A(n,k)$ with $0\le k\le n$
and a sequence of nonnegative numbers $(a_k)_{k\ge 0},$
define 
\begin{equation*}
b_n=\sum_{k=0}^{n}A(n,k)a_k.
\end{equation*}
Note that in this section, the double-indexed sequence is indexed from $0$ in both parameters.
Liu and Wang~\cite{LW07} systematically studied linear transformations that preserve log-convexity.
They proved that the linear transformations associated with many combinatorial numbers, such as the binomial coefficients, the Stirling numbers of the second kind and the Eulerian numbers preserve log-convexity, i.e., the log-convexity of $(a_k)_{k\ge 0}$ implies that of $(b_n)_{n\ge 0}.$
See~\cite{CWY11,LW07,WZ16} for more details.
Chen, Wang and Yang~\cite{CWY10} subsequently proved that the linear transformation defined by the Narayana numbers preserves log-convexity.
We present the following result.

\begin{theorem}\label{thm:baxter-transform}
If the sequence $(a_k)_{k\geq 0}$ of nonnegative real numbers is
log-convex, then so is the sequence $(b_n)_{n\geq 0}$, where
$$
b_n=\sum_{k=0}^{n}B(n,k)a_k,\quad n\geq 0.
$$
\end{theorem}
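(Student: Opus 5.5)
The approach is the standard Liu--Wang / Chen--Wang--Yang route: reduce preservation of log-convexity to coefficientwise positivity of the quadratic form built from $B(n,k)$, and then use the Schur positivity of Theorem~\ref{thm:main}. Write
\[
b_{n-1}b_{n+1}-b_n^2=\sum_{i,j}\bigl(B(n-1,i)B(n+1,j)-B(n,i)B(n,j)\bigr)a_ia_j ,
\]
and group the terms with $i+j=r$ fixed. Set $T_n(i,j)=B(n-1,i)B(n+1,j)+B(n-1,j)B(n+1,i)-2B(n,i)B(n,j)$. Then
\[
b_{n-1}b_{n+1}-b_n^2=\sum_{r}\sum_{i\le j,\ i+j=r}\tfrac{1}{1+\delta_{ij}}\,T_n(i,j)\,a_ia_j .
\]
The key fact to use is that for a log-convex nonnegative sequence, $a_ia_j\ge a_{i'}a_{j'}$ whenever $i+j=i'+j'$ and $i\le i'\le j'\le j$. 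That is, the products $a_ia_{r-i}$ weakly decrease as $i$ moves toward $r/2$. By Abel summation, it therefore suffices to prove that for every $r$ and every $m\le r/2$ the partial sums
\[
\sum_{i=0}^{m}\tfrac{1}{1+\delta_{i,r-i}}\,T_n(i,r-i)\ \ge 0 .
\]
The case $m=\lfloor r/2\rfloor$ is exactly $C_1(r)-C_2(r)\ge0$, which was shown in the proof of Theorem~\ref{q-lcx}.

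The main step is the partial sums. I would go back to the identity \eqref{c1-c2}, but now keep track of which $k$ contributes. The expression $C_1(r)-C_2(r)=\ps_{n-1}^1(\sum_k F(r,k))$ came from pairing $k$ with $r-k$. I would symmetrize $F(r,k)$ in $k\leftrightarrow r-k$ and check that
\[
\sum_{i\le m}\tfrac{1}{1+\delta}\,T_n(i,r-i)=\ps_{n-1}^1\Bigl(\sum_{k\in I_m}F(r,k)\Bigr)+\text{(explicitly nonnegative boundary terms)}
\]
for a suitable window $I_m$ of indices. Positivity would then follow from a truncated version of the Schur expansions \eqref{GG1}--\eqref{GG3}. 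Concretely, truncate the generating series $\Ya,\Yb,\dots$ at degree $m$ and reuse Theorems~\ref{thm:base-pf} and~\ref{thm:base-pf22}. The Pfaffian identities are Toeplitz-local, so truncated versions should yield Schur-positive expansions in which $b$ (or $j$) is restricted.

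If this truncation becomes unwieldy, the fallback is a more direct route. Use the explicit product formula
\[
B(n,k)=\frac{2\,n!(n+1)!(n+2)!}{k!(k+1)!(k+2)!(n-k)!(n-k+1)!(n-k+2)!}.
\]
Form the ratio $B(n-1,i)B(n+1,j)/(B(n,i)B(n,j))$ as an explicit rational function. Then show that $T_n(i,r-i)$ changes sign at most once in $i\in[0,r/2]$, being negative near the ends and positive near the middle, or the reverse in the order compatible with the monotonicity above. Combined with the full sum being nonnegative, a single sign change makes all partial sums in the correct direction nonnegative. This is the strategy used for Narayana numbers by Chen--Wang--Yang.

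I expect the main obstacle to be establishing this partial-sum (or single-sign-change) property. The full-sum positivity is already available from Theorem~\ref{thm:main}, but the refinement needed for arbitrary log-convex $(a_k)$, rather than geometric ones, requires control of each truncation. I would first test the sign pattern of $T_n(i,r-i)$ numerically for small $n$ to decide between the two routes.
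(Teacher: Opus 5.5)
Your reduction is correct and is the one the paper uses. You group $b_{n-1}b_{n+1}-b_n^2$ by $r=i+j$, use that $a_ka_{r-k}$ is nonincreasing for $0\le k\le r/2$, and take $\sum_{k\le\lfloor r/2\rfloor}\beta_k(n,r)=C_1(r)-C_2(r)\ge0$ from Theorem~\ref{q-lcx}; your partial-sum criterion is what Abel summation needs. The gap is that the proposal stops at the step that carries the proof. You never establish the partial-sum (or sign-change) property, and you leave even its orientation open.

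The paper closes this with your fallback, and the missing idea is an explicit monotonicity computation. For $k\le r-n-1$ one has $B(n,r-k)=0$, so $\beta_k(n,r)\ge0$ trivially; dividing by $B(n,i)B(n,j)$ is not possible on this range. For $\max\{0,r-n\}\le k\le r/2$, dividing by $B(n,k)B(n,r-k)>0$ shows that $\beta_k(n,r)$ has the sign of $\tfrac{n+3}{n}T(k)-2$. Here, with $a=n-k$ and $b=n-r+k$,
\[
T(k)=T_1(k)+T_2(k),\qquad T_1(k)=\frac{a(a+1)(a+2)}{(b+1)(b+2)(b+3)},\qquad T_2(k)=\frac{b(b+1)(b+2)}{(a+1)(a+2)(a+3)}.
\]
One computes $T_1(k)-T_1(k+1)=\frac{3(a+b+3)a(a+1)}{(b+1)(b+2)(b+3)(b+4)}$ and $T_2(k+1)-T_2(k)=\frac{3(a+b+3)(b+1)(b+2)}{a(a+1)(a+2)(a+3)}$. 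For $k<r/2$ we have $a-b=r-2k\ge1$, so every factor of their ratio is at least $1$ and the first difference dominates the second. Hence $T$ is nonincreasing, and $\beta_k(n,r)$ changes sign at most once, from $\ge0$ to $\le0$. This is consistent with the middle term $B(n-1,r/2)B(n+1,r/2)-B(n,r/2)^2$ being $\le0$ by Lemma~\ref{lc-n}. With $k'$ the last index where $\beta_k(n,r)\ge0$, this gives $\sum_k\beta_k(n,r)a_ka_{r-k}\ge a_{k'}a_{r-k'}\sum_k\beta_k(n,r)\ge0$.

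Your primary route has no mechanism behind it and should be dropped. Individual $F(r,k)$ are not Schur positive and can be Schur negative: $F(2,1)=(s_{(1)}+2)s_{(3)}-(s_{(2)}+s_{(1)})^2=-(s_{(2,2)}+2s_{(2,1)}+s_{(2)}+s_{(1,1)})$. Positivity appears only after summing over all $k$. Moreover, the derivation of \eqref{c1-c2} uses reflections $k\leftrightarrow r-k$ and shifts of $k$ (for instance $A_8=D_4$ and $D_{24}=A_{24}$). So the index $k$ of $F(r,k)$ does not track the pair $(i,r-i)$ of $T_n(i,r-i)$. Finally, Theorems~\ref{thm:base-pf} and~\ref{thm:base-pf22} are identities of complete generating series in $t$ and give no control over truncations in $k$.
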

Following Liu and Wang~\cite[Section 4.2]{LW07},
it is not difficult to verify that 
for $n\geq 1$,
\begin{equation}\label{b_n}
	b_{n-1}b_{n+1}-b_n^2
	=
	\sum_{r=0}^{2n}
	\left(
	\sum_{k=0}^{\left\lfloor r/2\right\rfloor}
	\beta_k(n,r)a_ka_{r-k}
	\right),
\end{equation}
and
\begin{equation}\label{lcxcoeff}
	B_{n-1}(x)B_{n+1}(x)-B_n(x)^2
	=
	\sum_{r=0}^{2n}
	\left(
	\sum_{k=0}^{\left\lfloor r/2\right\rfloor}
	\beta_k(n,r)
	\right)x^r,
\end{equation}
where 
\begin{equation*}
	\beta_k(n,r)= 
	\begin{cases}
		B(n-1,k)B(n+1,r-k)+B(n-1,r-k)B(n+1,k)\\
		\qquad-2B(n,k)B(n,r-k) & \text{if } 0\leq k<r/2,\\
		B(n-1,k)B(n+1,k)-B^2(n,k) & \text{if } k=r/2.
	\end{cases}
\end{equation*}
Recall that 
$$
B(n,k)=\frac{\binom{n}{k}\binom{n+1}{k}\binom{n+2}{k}}
	{\binom{k+1}{k}\binom{k+2}{k}}.
$$
The following result can be obtained by direct computation; see~\cite[Theorem 2.3]{MS26}.
\begin{lemma}\label{lc-n}
	Given an integer $k$, the sequence $(B(n,k))_{n\ge k}$  is log-concave.
\end{lemma}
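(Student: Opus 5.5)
Fix $k\ge 0$. We must show $B(n,k)^2\ge B(n-1,k)B(n+1,k)$ for all $n\ge k+1$. When $n-1<k$ the right-hand side vanishes by the convention $B(n,k)=0$ unless $0\le k\le n$, so only this range matters. The plan is a direct ratio computation: $B(n,k)$ is a product of three binomial coefficients in $n$ divided by a constant depending only on $k$, and log-concavity in $n$ is preserved under products. So it suffices to show that each of $\binom{n}{k}$, $\binom{n+1}{k}$, $\binom{n+2}{k}$ is log-concave in $n$ on the relevant range. The denominator $\binom{k+1}{k}\binom{k+2}{k}$ cancels in the ratio $B(n,k)^2/(B(n-1,k)B(n+1,k))$.

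For a single factor $\binom{n+c}{k}$ with $c\in\{0,1,2\}$, I will compute
\[
\frac{\binom{n+c}{k}^2}{\binom{n+c-1}{k}\binom{n+c+1}{k}}
=\frac{(n+c)(n+c-k+1)}{(n+c+1)(n+c-k)},
\]
valid whenever $n+c-1\ge k$. This ratio is at least $1$ because
\[
(n+c)(n+c-k+1)-(n+c+1)(n+c-k)=k\ge 0.
\]
If instead $n+c-1<k$, then $\binom{n+c-1}{k}=0$ and the inequality for that factor holds trivially. Multiplying the three factor inequalities gives $B(n,k)^2\ge B(n-1,k)B(n+1,k)$ for $n\ge k+1$, with all terms nonnegative.

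The only point needing care is the boundary $n=k+1$. There $B(n-1,k)=B(k,k)$ is nonzero, while a factor such as $\binom{n+c-1}{k}$ with $c=0$ equals $\binom{k}{k}=1$, so the ratio formula still applies. Even in degenerate cases the inequality is between products of nonnegative numbers where each factor satisfies $x_n^2\ge x_{n-1}x_{n+1}$, so the product does too.

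Alternatively, one could use Lemma~\ref{coeff}, which writes $B(n,k)=\ps^1_n(s_{(3^k)})$, together with the content formula of Lemma~\ref{lem:hook-content}. This expresses $B(n,k)$ as $\prod_{i,j}(n+j-i)$ over the cells of $(3^k)$ divided by a constant, a product of linear factors $n+m$. It gives the same conclusion, but the binomial route is cleaner. I expect no real obstacle; the main bookkeeping is the boundary case.
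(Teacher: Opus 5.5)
Your proof is correct and is exactly the kind of direct computation the paper has in mind. The paper gives no argument of its own and only cites \cite[Theorem 2.3]{MS26}; your route shows each factor $\binom{n+c}{k}$ has ratio $\frac{(n+c)(n+c-k+1)}{(n+c+1)(n+c-k)}\ge 1$ and uses that log-concavity of nonnegative sequences is preserved by products and positive scaling. The paper's Theorem~\ref{q-lc-n} also recovers the lemma at $q=1$ by a different, Schur-positivity route, via $B_q(k+m,k)=q^{3k(k-1)/2-3m}\ps_{k+3}\bigl(s_{(m^3)}\bigr)$ and Kirillov's identity.
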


\begin{proof}[Proof of Theorem~\ref{thm:baxter-transform}]
	Since the sequence $(a_k)_{k\geq0}$ is nonnegative and log-convex, we have
	$$
	a_0a_r\geq a_1a_{r-1}\geq a_2a_{r-2}\geq\cdots.
	$$
We claim that, for every fixed $n$ and $r$, there exists an integer
$k'=k'(n,r)$ such that
\[
\beta_k(n,r)\geq 0
\quad\text{for }0\leq k\leq k'
\qquad\textrm{and}\qquad
\beta_k(n,r)\leq 0
\quad\text{for }k'<k\leq \left\lfloor\frac r2\right\rfloor.
\]
	By Theorem~\ref{q-lcx}, $(B_n(x))_{n\ge 0}$ is $q$-log-convex.
    Then by~\eqref{lcxcoeff},  
	$$
	\sum_{k=0}^{\left\lfloor r/2\right\rfloor}\beta_k(n,r)\ge 0
	$$
	for all $r$. 
	Therefore,
	\begin{align*}
		\sum_{k=0}^{\left\lfloor r/2\right\rfloor}
		\beta_k(n,r)a_ka_{r-k}=&\sum_{k=0}^{k'}
		\beta_k(n,r)a_ka_{r-k}+\sum_{k=k'+1}^{\left\lfloor r/2\right\rfloor}
		\beta_k(n,r)a_ka_{r-k}\\
		\ge&\sum_{k=0}^{k'}
		\beta_k(n,r)a_{k'}a_{r-k'}+\sum_{k=k'+1}^{\left\lfloor r/2\right\rfloor}
		\beta_k(n,r)a_{k'}a_{r-k'}\\
		={}&a_{k'}a_{r-k'} \sum_{k=0}^{\left\lfloor r/2\right\rfloor}
		\beta_k(n,r)\ge 0.
	\end{align*}
	By~\eqref{b_n}, for each $n$, $b_{n-1}b_{n+1}-b_n^2\ge 0$ and the sequence $(b_n)_{n\geq0}$ is log-convex. 
	
	We now prove the existence of \(k'\).    
	Note that for
	$r=0$, $$\beta_0({n,0})=B(n-1,0)B(n+1,0)-B(n,0)^2=0.$$
    We assume that $r\ge 1$.
    Let 
	\begin{equation}
		\beta'_k(n,r)= 
		\begin{cases}
			2\beta_k(n,r) & \text{if $r$ is even and $k=r/2$},\\
			\beta_k(n,r) & \text{otherwise}.
		\end{cases}
	\end{equation}
   Clearly, $\beta'_k(n,r)$ and $\beta_k(n,r)$ have the same sign. 
   For $0\le k\le \lfloor{ r/2\rfloor}$,
   $$
   \beta'_k(n,r)=B(n-1,k)B(n+1,r-k)+B(n-1,r-k)B(n+1,k)-2\,B(n,k)B(n,r-k).
   $$
    We divide the range of $k$ into three cases: 
   \begin{equation}\label{interval}
    k\le r-n-1,\qquad r-n\le k<r/2,\qquad k=r/2 \quad (\textrm{if $r$ is even}).
    \end{equation}

    For the first interval $k\le r-n-1$,
     \begin{equation}\label{rgen}
    \beta'_k(n,r)=B(n-1,k)B(n+1,r-k)
	+
	B(n+1,k)B(n-1,r-k)\ge 0 
    \end{equation}
    for $k\le r-n-1$,
    since $B(n,r-k)=0$. 
    
    For the second interval,
	consider $\max\{0,r-n\}\le k<r/2$. 
	Then $\beta'_k(n,r)$ equals
	\begin{align*}
		&B(n,k)B(n,r-k) \left( 
		\frac{B(n-1,k)B(n+1,r-k)}{B(n,k)B(n,r-k)}+
		\frac{B(n+1,k)B(n-1,r-k)}{B(n,k)B(n,r-k)}-
		2 \right) \\
		={}&B(n,k)B(n,r-k) \left(\frac{\binom{n-1}{k}}{\binom{n+2}{k}} \frac{\binom{n+3}{r-k}}{\binom{n}{r-k}}
		+\frac{\binom{n+3}{k}}{\binom{n}{k}} \frac{\binom{n-1}{r-k}}{\binom{n+2}{r-k}}
		-2\right)\\
		={}&B(n,k)B(n,r-k) \left( \frac{n+3}{n} \left(
		\prod_{i=0}^{2}
		\frac{n-k+i}{n-r+k+i+1}+
		\prod_{j=0}^{2}
		\frac{n-r+k+j}{n-k+j+1} \right)
		-2 \right).
	\end{align*}
	Let 
	\begin{equation*}
		T(k)=\prod_{i=0}^{2}
		\frac{n-k+i}{n-r+k+i+1}+
		\prod_{j=0}^{2}
		\frac{n-r+k+j}{n-k+j+1}.
	\end{equation*}
	Then 
	$$
	\beta'_k(n,r)=B(n,k)B(n,r-k)\left( \frac{n+3}{n}T(k)-2 \right).
	$$ 
    It is clear that $\beta'_k(n,r)$ has the same sign as  $\frac{n+3}{n}T(k)-2$ for each $k$.
    
    Now we prove $T(k)-T(k+1)\ge 0$.
    Note that 
    \begin{equation*}
		T(k+1)=\prod_{i=0}^{2}
		\frac{n-k-1+i}{n-r+k+i+2}+
		\prod_{j=0}^{2}
		\frac{n-r+k+j+1}{n-k+j}.
	\end{equation*}
	For convenience, set
	$a=n-k$ and $b=n-r+k$.
	Then $a\ge 1,\, b\ge 0$.
Define
	\begin{align*}
	    T_1(k)&:=\frac{a(a+1)(a+2)}{(b+1)(b+2)(b+3)},\qquad T_2(k):=\frac{b(b+1)(b+2)}{(a+1)(a+2)(a+3)},\\
        T_1(k+1)&:=\frac{(a-1) a (a+1)}{(b+2)(b+3)(b+4)},\qquad T_2(k+1):=\frac{(b+1)(b+2)(b+3)}{a(a+1)(a+2)}.
	\end{align*}
    Then $T(k)=T_1(k)+T_2(k)$ and $T(k+1)=T_1(k+1)+T_2(k+1)$.

A direct calculation gives
\begin{align*}
T_1(k)-T_1(k+1)
&=
\frac{
3(a+b+3)a(a+1)
}{
(b+1)(b+2)(b+3)(b+4)
},\\
T_2(k+1)-T_2(k)
&=
\frac{
3(a+b+3)(b+1)(b+2)
}{
a(a+1)(a+2)(a+3)
}.
\end{align*}
Consequently,
\begin{equation}\label{eq:T12factor}
		\frac{T_1(k)-T_1({k+1})}{T_2({k+1})-T_2(k)}
		=
		\frac{a(a+1)}{(b+1)(b+2)}
		\cdot
		\frac{a(a+1)(a+2)(a+3)}
		{(b+1)(b+2)(b+3)(b+4)}.
\end{equation}
	Since $k<r/2$, 
    $$
    a-b=(n-k)-(n-r+k)=r-2k\ge1.
    $$
	So every factor in the right-hand side of \eqref{eq:T12factor} is
	greater than or equal to $1$. Hence
	$$
	T_1(k)-T_1({k+1})\ge T_2({k+1})-T_2(k).
	$$
	It follows that
	$$
	T(k)-T(k+1)
	=(T_1({k})-T_1(k+1))-(T_2({k+1})-T_2(k))\ge0.
	$$
	Thus, $T(k)$ is nonincreasing in $k$.

For the third interval, $k=r/2$.
Then 
$$\beta'_{r/2}(n,r)=2\left(B(n-1,r/2) B(n+1,r/2)-B(n,r/2)^2\right).$$
By Lemma~\ref{lc-n},
$\beta'_{r/2}(n,r)\le 0.$
Thus to prove that the index $k'$ exists,
it suffices to prove that $\beta'_0(n,r)\ge 0.$

\noindent
\medskip
\textbf{Case 1:} For $r\le n$, 
by~\eqref{interval}, the range of $k$ is divided into two intervals $0\le k< r/2$
and $r/2$.
Then 
    \begin{align*}
    \frac{n+3}{n}T(0)-2
    &=\prod_{i=0}^{2}\frac{n+1+i}{n-r+1+i}+\prod_{j=0}^{2}\frac{n-r+j}{n+j}-2\\
    &=\prod_{i=0}^{2}\left(1+\frac{r}{n-r+1+i}\right)+\prod_{j=0}^{2}\left(1-\frac{r}{n+j}\right)-2\\
    &\ge 1+\sum_{i=0}^{2}\frac{r}{n-r+1+i}+1-\sum_{j=0}^{2}\frac{r}{n+j}-2\\
    &=\sum_{i=0}^{2}\left(\frac{r}{n-r+1+i}-\frac{r}{n+i} \right)\ge 0,
    \end{align*}
    where the inequality follows from the two elementary inequalities $\prod_i (1+x_i)\ge 1+\sum_ix_i$ and
    $$
    \prod_i (1-x_i)\ge 1-\sum_i x_i 
    \qquad \textrm{for }\quad 0\le x_i\le 1.$$

Since $\frac{n+3}{n}T(0)-2$ and $\beta'_0(n,r)$ have the same sign,
and $T(k)$ is nonincreasing, the sign of $\beta'_k(n,r)$ can change at most once from nonnegative to nonpositive.

\noindent
\medskip
\textbf{Case 2:} For $n<r\le 2n$, 
by~\eqref{interval}, the range of $k$ is divided into three intervals $k\le r-n-1$, $r-n\le k< r/2$ and $k=r/2$.
By~\eqref{rgen},  $\beta'_k(n,r)\ge 0$ in the first interval $k\le r-n-1$.
Since  $T(k)$ is nonincreasing in the second interval $r-n\le k\le r/2$,
the sign of $\beta'_k(n,r)$ can change at most once.
Thus, in both cases, the index $k'$ exists. This completes the proof.
\end{proof}
\subsection{$q$-Log-concavity}\label{sec:q-log-concavity1}
In this subsection, we prove the $q$-log-concavity in
both the row and column directions for the
$q$-refined Baxter numbers. 
Chen, Wang and Yang~\cite{CWY10} established the $q$-log-concavity of the 
$q$-Narayana numbers.
We prove the following results on the $q$-refined Baxter numbers, in parallel to the results of $q$-Narayana numbers.
\begin{theorem}\label{q-lc-k}
	Given an integer $n$, the polynomial sequence $(B_q(n,k))_{0\le k\le n}$ is $q$-log-concave.
\end{theorem}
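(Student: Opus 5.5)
By Lemma~\ref{BqSchur} we have $B_q(n,k)=\ps_n\bigl(s_{(3^k)}\bigr)$. The goal is to show, for $1\le k\le n-1$ (the boundary cases being trivial because $B_q(n,k)=0$ outside $[0,n]$), that
\[
\ps_n\bigl(s_{(3^k)}\bigr)^2-\ps_n\bigl(s_{(3^{k-1})}\bigr)\ps_n\bigl(s_{(3^{k+1})}\bigr)\ge_q 0 .
\]
Since $\ps_n$ is a ring homomorphism, this difference equals $\ps_n$ applied to $s_{(3^k)}^2-s_{(3^{k-1})}s_{(3^{k+1})}$. A Schur positive symmetric function specializes under $\ps_n$ to a polynomial in $q$ with nonnegative coefficients, because each $s_\lambda(1,q,\dots,q^{n-1})$ is a generating function of tableaux. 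So the statement reduces to the Schur positivity of
\[
s_{(3^k)}^2-s_{(3^{k-1})}s_{(3^{k+1})}.
\]

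For this I would first apply $\omega$ and Lemma~\ref{SPOMEGA}, which turns it into $s_{(k,k,k)}^2-s_{(k-1,k-1,k-1)}s_{(k+1,k+1,k+1)}$. I would then use the known Schur-log-concavity result for rectangles, namely $s_{\lambda}s_{\mu}\le_s s_{\lambda\cup\mu\text{-type averages}}$. This is the Lam--Postnikov--Pylyavskyy theorem: $s_{\lceil(\lambda+\mu)/2\rceil}s_{\lfloor(\lambda+\mu)/2\rfloor}-s_\lambda s_\mu$ is Schur positive. Take $\lambda=(3^{k-1})$ and $\mu=(3^{k+1})$, viewed as partitions (or in conjugate form, $(k-1,k-1,k-1)$ and $(k+1,k+1,k+1)$). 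Their average is $(3^k)$, and since it is integral, floor and ceiling coincide. This gives exactly the required positivity.

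If I want to stay within the paper's own tools instead of citing that theorem, the fallback is a Pfaffian/Jacobi--Trudi argument. Write $s_{(3^k)}=\det[e_{k-i+j}]_{1\le i,j\le3}$ via the dual Jacobi--Trudi identity and express $s_{(3^k)}^2-s_{(3^{k-1})}s_{(3^{k+1})}$ through a Pfaffian minor summation as in Theorem~\ref{thm:weighted-skew-pfaffian}. This is the main obstacle: producing an explicitly positive expansion by hand is not easy. For that reason my first choice is the LPP citation, followed by the specialization step.

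The key steps, in order, are: reduce to Schur positivity via $\ps_n$; observe that $(3^k)$ is the exact average of $(3^{k-1})$ and $(3^{k+1})$; invoke the LPP theorem; and specialize back to conclude $q$-log-concavity of $(B_q(n,k))_{0\le k\le n}$.
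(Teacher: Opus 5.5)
Your proof is correct, but it reaches the key Schur positivity by a different route from the paper. The paper uses no general log-concavity theorem. It quotes Kirillov's identity $s_{(a^b)}^2=s_{(a^{b+1})}s_{(a^{b-1})}+s_{((a+1)^b)}s_{((a-1)^b)}$ with $a=3$, $b=k$. This gives the exact equality $s_{(3^k)}^2-s_{(3^{k+1})}s_{(3^{k-1})}=s_{(4^k)}s_{(2^k)}$, and the paper then specializes.

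That identity is just Desnanot--Jacobi condensation applied to the Jacobi--Trudi matrix $[h_{3-i+j}]_{1\le i,j\le k+1}$ of $s_{(3^{k+1})}$. The two diagonal corner minors are both $s_{(3^k)}$, the two off-diagonal ones are $s_{(2^k)}$ and $s_{(4^k)}$, and the central minor is $s_{(3^{k-1})}$. So the ``stay within the paper's tools'' fallback you expected to be hard is in fact one line. The exact identity also gives a closed form, $B_q(n,k)^2-B_q(n,k-1)B_q(n,k+1)=\ps_n(s_{(4^k)})\,\ps_n(s_{(2^k)})$.

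Your appeal to Lam--Postnikov--Pylyavskyy is a legitimate citation, but much heavier: its proof relies on a result of Rhoades and Skandera. It also yields only positivity, not a formula. Both routes extend verbatim from $3$ to any $d$.

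One statement in your write-up is wrong. The $\lceil\cdot\rceil/\lfloor\cdot\rfloor$ theorem averages row lengths, and the average of $(3^{k-1})$ and $(3^{k+1})$ is $(3^{k-1},\tfrac32,\tfrac32)$, not $(3^k)$. Applied to those shapes, it only gives $s_{(3^{k-1},2,2)}\,s_{(3^{k-1},1,1)}\ge_s s_{(3^{k-1})}s_{(3^{k+1})}$, which is useless here.

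So your preliminary application of $\omega$ is essential, not cosmetic. It is the conjugates $((k-1)^3)$ and $((k+1)^3)$ whose average is $(k^3)$, and Lemma~\ref{SPOMEGA} then transfers the positivity back. Equivalently, you can apply LPP's Fomin--Fulton--Li--Poon ``sort'' version directly to $(3^{k-1})$ and $(3^{k+1})$: since $(3^{k-1})\cup(3^{k+1})=(3^{2k})$, it splits into $(3^k),(3^k)$. That version is exactly the $\omega$-conjugate of the averaging one.
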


\begin{proof}
	By Lemma~\ref{BqSchur}, for any $k\ge 1$, 
	\begin{align*}
		B_q(n,k)^2-B_q(n,k-1)B_q(n,k+1)&=\ps_n\left(s_{(3^k)}\right)\ps_n\left(s_{(3^k)}\right)-\ps_n\left(s_{(3^{k+1})}\right)\ps_n\left(s_{(3^{k-1})}\right)\\
		&=\ps_n\left(s_{(3^k)} s_{(3^k)}-s_{(3^{k+1})}s_{(3^{k-1})}\right).
	\end{align*}

    Kirillov~\cite{Kir84} proved that for $a,b\ge 1$, we have the following identity for Schur functions
	\begin{equation}\label{kir}
	s_{(a^b)} s_{(a^b)}=
	s_{(a^{b+1})}s_{(a^{b-1})}+
	s_{((a+1)^{b})}s_{((a-1)^{b})}.
	\end{equation}
	Then 
    $$
    s_{(3^k)} s_{(3^k)}-s_{(3^{k+1})}s_{(3^{k-1})}=s_{(4^{k})}s_{(2^{k})}
    $$
    is Schur positive for any $k\ge0$.
	Hence, the sequence $\left(B_q(n,k)\right)_{0\le k\le n}$ is $q$-log-concave.
\end{proof}

The following is the $q$-analogue of Lemma~\ref{lc-n}.
\begin{theorem}\label{q-lc-n}
	Given an integer $k$, the polynomial sequence $(B_q(n,k))_{n\ge k}$  is $q$-log-concave.
\end{theorem}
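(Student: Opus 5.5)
The plan is to mirror the proof of Theorem~\ref{q-lc-k}. I will rewrite $B_q(n,k)$ as a principal specialization of a rectangular Schur function whose \emph{shape} varies with $n$ while the number of variables stays fixed, and then apply Kirillov's identity~\eqref{kir}. Fix $k\ge 1$; the case $k=0$ is trivial since $B_q(n,0)=1$. By Lemma~\ref{BqSchur}, $B_q(n,k)=\ps_n(s_{(3^k)})$, but here $n$ enters as the number of variables. To trade it for a shape parameter, I will use the box symmetry of the hook-content formula (Lemma~\ref{lem:hook-content}). For a rectangle $(c^b)$ in $m\ge b$ variables,
\[
\ps_m\bigl(s_{(c^b)}\bigr)=q^{c\binom b2}\prod_{i=1}^{b}\prod_{j=1}^{c}\frac{[m+j-i]_q}{[b+c+1-i-j]_q},
\]
and this product equals MacMahon's generating function $\prod_{i\le b,\,j\le c,\,l\le m-b}\frac{[i+j+l-1]_q}{[i+j+l-2]_q}$ for plane partitions in a $b\times c\times(m-b)$ box. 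That generating function is symmetric in the three side lengths.

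Step 1 compares two instances of this formula. Take $(b,c,m)=(k,3,n)$, which gives the box $k\times 3\times(n-k)$, and $(b,c,m)=(k,n-k,k+3)$, which gives the box $k\times(n-k)\times 3$. By the symmetry, the two $q$-products agree, so
\[
B_q(n,k)=q^{3\binom k2-(n-k)\binom k2}\,\ps_{k+3}\bigl(s_{((n-k)^k)}\bigr),\qquad n\ge k.
\]
I will verify this identity by a direct telescoping comparison of the two hook-content products. Both products reduce to ratios of $q$-factorials, as in the proof of Lemma~\ref{BqSchur}, so there is no need to invoke MacMahon's theorem. This verification is the step I expect to be the main obstacle, purely in bookkeeping: I must check that the products really coincide and pin down the exact power of $q$. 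The essential feature is that the exponent $e(n)=(3-(n-k))\binom k2$ is \emph{linear} in $n$. Consequently $e(n-1)+e(n+1)=2e(n)$, so the powers of $q$ cancel in the log-concavity expression.

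Step 2 applies Kirillov's identity. For $n\ge k+1$, set $a=n-k\ge 1$. Using Step 1 and the multiplicativity of $\ps_{k+3}$,
\[
B_q(n,k)^2-B_q(n-1,k)B_q(n+1,k)=q^{2e(n)}\,\ps_{k+3}\Bigl(s_{(a^k)}^2-s_{((a-1)^k)}s_{((a+1)^k)}\Bigr).
\]
By~\eqref{kir}, the bracketed difference equals $s_{(a^{k+1})}s_{(a^{k-1})}$, which is Schur positive by the Littlewood--Richardson rule.

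Step 3 concludes. The principal specialization of any Schur function $s_\lambda$ is a polynomial in $q$ with nonnegative coefficients, being a sum of monomials over semistandard tableaux. Hence the difference above is $\ge_q 0$. The boundary case $n=k$ causes no trouble: there $B_q(k-1,k)=0$ by convention, so the difference is simply a square and is automatically $\ge_q 0$. This proves that $(B_q(n,k))_{n\ge k}$ is $q$-log-concave.
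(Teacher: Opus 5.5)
Your proof is correct and is essentially the paper's argument. The paper also uses the box symmetry of the hook-content product to write $B_q(k+m,k)$ as a linear-in-$m$ power of $q$ times a principal specialization in $k+3$ variables of a rectangle with $m=n-k$ columns, and then applies Kirillov's identity~\eqref{kir}; the only differences are that it uses $(m^3)$ rather than your $(m^k)$ (its complement in $(m^{k+3})$), and that it invokes Lemma~\ref{lem:min-degree} to absorb the negative power $q^{-6m}$. Your exponent $2e(n)=2(3-a)\binom{k}{2}$ is likewise negative once $a>3$ and $k\ge 2$, so Step 3 should add one line: the left side is a genuine polynomial, being built from the polynomials $B_q(\cdot,k)$, and it equals a monomial shift of the nonnegative polynomial $\ps_{k+3}\bigl(s_{(a^{k+1})}s_{(a^{k-1})}\bigr)$, hence it is $\ge_q 0$.
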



We now turn to the $q$-log-concavity of the $q$-refined Baxter numbers for fixed $k$.
We first need the following result.

		
		\begin{lemma}\label{lem:min-degree}
			Let $r,m,k\geq 0$.  Then
			\begin{equation*}
				q^{-m\binom r2} \ps_{k+3}\left(s_{(m^r)}\right)\ge_q 0.
			\end{equation*}
		\end{lemma}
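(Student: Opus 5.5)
The plan is to compute $\ps_{k+3}\bigl(s_{(m^r)}\bigr)$ directly from the hook-content formula, Lemma~\ref{lem:hook-content}, and to read off its lowest power of $q$. We may assume $r\le k+3$, since otherwise $s_{(m^r)}$ vanishes in $k+3$ variables and there is nothing to prove; the cases $m=0$ or $r=0$ are trivial as well. For $\lambda=(m^r)$ we have $\sum_{i\ge1}(i-1)\lambda_i=m\binom r2$. Lemma~\ref{lem:hook-content} then gives
\begin{equation*}
q^{-m\binom r2}\ps_{k+3}\bigl(s_{(m^r)}\bigr)=\prod_{(i,j)\in(m^r)}\frac{1-q^{k+3+j-i}}{1-q^{h(i,j)}} .
\end{equation*}
The remaining task is therefore to show that this ratio of $q$-integers is a polynomial with nonnegative coefficients.

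An alternative route is more conceptual and avoids any cyclotomic bookkeeping. The set of semistandard Young tableaux of shape $\lambda$ with entries in $[N]$ has a unique minimal element for the weight $\sum_T(\text{entry}-1)$, namely the tableau whose row $i$ is filled entirely with $i$. This minimal weight is exactly $\sum_i(i-1)\lambda_i=m\binom r2$. The combinatorial definition of $s_\lambda$ gives
\begin{equation*}
\ps_N(s_\lambda)=\sum_T q^{\sum(\text{entries}-1)},
\end{equation*}
a sum of monomials indexed by tableaux. So $\ps_{k+3}(s_{(m^r)})$ is a polynomial in $q$ with nonnegative integer coefficients, and every exponent is at least $m\binom r2$. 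Dividing by $q^{m\binom r2}$ therefore yields a polynomial with nonnegative coefficients. I would present this tableau argument as the proof, since it is immediate. The hook-content computation is then only a cross-check identifying the product form, which is what the subsequent $q$-log-concavity argument in $n$ presumably needs.

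I expect no real obstacle here. The only points needing care are the boundary conventions: $r>k+3$ gives $0$, and the empty partition gives $1$. One should also confirm that the shift $m\binom r2$ coincides with the minimal tableau weight, rather than with the exponent from some other normalization of the principal specialization. The paper defines $\ps_n$ using the alphabet $1,q,\dots,q^{n-1}$, and with that definition the two agree.
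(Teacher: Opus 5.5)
Your proof is correct, but it takes a different route from the paper's.

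\textbf{The paper's route.} The paper first restricts to $r\le k+3$. It then applies the hook-content formula (Lemma~\ref{lem:hook-content}) to write $q^{-m\binom r2}\ps_{k+3}\bigl(s_{(m^r)}\bigr)$ as $\prod_{(i,j)\in(m^r)}\bigl(1-q^{k+3+j-i}\bigr)/\bigl(1-q^{h(i,j)}\bigr)$. Finally it cites Macdonald for the fact that this quotient is a polynomial with nonnegative coefficients.

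\textbf{Your route.} Your tableau argument bypasses both the product formula and the external citation. In a semistandard tableau the entry in row $i$ is at least $i$. So every monomial $q^{\sum(T-1)}$ in $\ps_{k+3}\bigl(s_{(m^r)}\bigr)$ has exponent at least $\sum_i(i-1)\lambda_i=m\binom r2$, and the claim follows at once. This works for every shape and every number of variables. It needs no separate case for $r>k+3$, since then there are simply no tableaux. In fact your observation is the standard proof of the fact the paper imports from Macdonald, so your version is more self-contained.

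\textbf{On the product form.} What the paper's route additionally displays is the explicit product form. Contrary to your guess, though, the later proof of Theorem~\ref{q-lc-n} uses only the nonnegativity statement of this lemma. It applies it with $r=2$ and $r=4$, and with $k+d$ variables and $r=d\pm1$ in the $d$-Hoggatt case. The product form needed there is obtained directly from Lemma~\ref{lem:hook-content}. So your tableau argument alone covers all downstream uses.
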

		\begin{proof}
        Note that if $r>k+3$, then
$
\operatorname{ps}_{k+3}(s_{(m^r)})=0,
$
and the assertion is immediate. We may therefore assume
$r\le k+3$.
			Using Lemma~\ref{lem:hook-content}, we have
			\begin{equation*}
			\ps_{k+3}\left(s_{(m^r)}\right)=q^{m\binom{r}{2}}\prod_{(i,j)\in(m^r)}
				\frac{1-q^{k+3+j-i}}{1-q^{h(i,j)}}.
			\end{equation*}
			It is known~\cite[p. 44]{Mac95} that $\prod_{(i,j)\in(m^r)}
			\frac{1-q^{k+3+j-i}}{1-q^{h(i,j)}}$ is a polynomial in $q$ with nonnegative coefficients.
			Hence, 
			\begin{equation*}
				q^{-m\binom r2}s_{(m^r)}(1,q,\ldots,q^{k+2})=\prod_{(i,j)\in(m^r)}
				\frac{1-q^{k+3+j-i}}{1-q^{h(i,j)}} \ge_q 0.
			\end{equation*}
			This completes the proof.
		\end{proof}
		
		\begin{proof}[Proof of Theorem~\ref{q-lc-n}]
			Recall that by~\eqref{eq-expre},
			\begin{align*}
				B_q(k+m,k)&=\ps_{k+m}\left(s_{(3^k)}\right)=q^{\frac{3k(k-1)}{2}}\prod_{i=1}^k \prod_{j=1}^3
				\frac{[k+m-i+j]_q}{[k-i-j+4]_q}.
			\end{align*}
			By reversing the index $i$ in the product formula,
			\begin{align*}
				B_q(k+m,k)&=q^{\frac{3k(k-1)}{2}}\prod_{i=1}^k 
				\frac{[k+m-i+1]_q\, [k+m-i+2]_q \,[k+m-i+3]_q}{[k-i+3]_q \,[k-i+2]_q \,[k-i+1]_q}\\
				&=q^{\frac{3k(k-1)}{2}}\prod_{i=1}^{k}
				\frac{[m+i]_q\,[m+i+1]_q\,[m+i+2]_q}
				{[i]_q\,[i+1]_q\,[i+2]_q}\\
				&=q^{\frac{3k(k-1)}{2}} \cdot \frac{[k+m]_q!}{[k]_q! [m]_q!} \cdot
				\frac{[k+m+1]_q! [1]_q!}{[k+1]_q! [m+1]_q!}\cdot
				\frac{[k+m+2]_q! [2]_q!}{[k+2]_q! [m+2]_q!}\\
				&=q^{\frac{3k(k-1)}{2}}\prod_{j=1}^{3}
				\frac{[k+m+j-1]_q!\,[j-1]_q!}
				{[k+j-1]_q!\,[m+j-1]_q!}.
			\end{align*}

			On the other hand, 
			by Lemma~\ref{lem:hook-content},
			\[
			\ps_{k+3}\left(s_{(m^3)}\right)
			=
			q^{3m}\prod_{i=1}^{3}\prod_{j=1}^{m}
			\frac{[k+3-i+j]_q}
			{[m-i-j+4]_q}=q^{3m} \prod_{i=1}^{3}
			\frac{[k+m+3-i]_q!\,[3-i]_q!}
			{[k+3-i]_q!\,[m+3-i]_q!}.
			\]
			Substituting $j=4-i$ yields
			$$\ps_{k+3}\left(s_{(m^3)}\right)
			=
			q^{3m}
			\prod_{j=1}^{3}
			\frac{[k+m+j-1]_q!\,[j-1]_q!}
			{[k+j-1]_q!\,[m+j-1]_q!}.
			$$
			Consequently,
			\[
			B_q(k+m,k)
			=
			q^{\frac{3k(k-1)}{2}-3m}\ps_{k+3}\left(s_{(m^3)}\right).
			\]
			
			For $m=0$, we have  
			\begin{equation*}
				B_q(k,k)^2-B_q(k-1,k)B_q(k+1,k)=B_q(k,k)^2=q^{3k(k-1)}\ge_q 0.
			\end{equation*}
			For $m\geq 1$,
			\begin{align*}
				&B_q(k+m,k)^2
				-
				B_q(k+m-1,k)B_q(k+m+1,k)\\
				={}&
				q^{3k(k-1)-6m}\cdot
				\ps_{k+3}\left(s_{(m^3)}^2-s_{((m-1)^3)}s_{((m+1)^3)}
				\right).
			\end{align*}
	        Taking $a=m, b=3$ in~\eqref{kir} yields
			$$
			B_q(k+m,k)^2
			-
			B_q(k+m-1,k)B_q(k+m+1,k)
			=
			q^{3k(k-1)-6m}\cdot
			\ps_{k+3}\left(s_{(m^2)}s_{(m^4)}
			\right).
			$$
			Then, by Lemma~\ref{lem:min-degree}, 
			\begin{align*}
				&B_q(k+m,k)^2-B_q(k+m-1,k)B_q(k+m+1,k)\\
				={}&q^{3k(k-1)+m}
				\left( q^{-m}\ps_{k+3}\left(s_{(m^2)}
				\right) \right)
				\left( q^{-6m}\ps_{k+3}\left(s_{(m^4)}
				\right) \right)\ge_q 0, 
			\end{align*}
			which implies the $q$-log-concavity of $\left(B_q(n,k)\right)_{n\ge k}$.
		\end{proof}	

\subsection{$q$-Analogue of the $d$-Hoggatt numbers}
		Define the $q$-analogue of the $d$-Hoggatt numbers
		\begin{equation*}
			H^{(d)}_{q}(n,k)=q^{\frac{dk(k-1)}{2}} \prod_{j=0}^{d-1} \frac{\begin{bmatrix} n+j \\ k \end{bmatrix}_q}{\begin{bmatrix} k+j \\ k \end{bmatrix}_q}.
		\end{equation*}
		In particular, when $d=1, 2$, and $3$, $H^{(d)}_{q}(n,k)$ specializes to the shifted $q$-binomial coefficients, the $q$-Narayana numbers, and the $q$-refined Baxter numbers, respectively.
		By an argument analogous to the proof of  Lemma~\ref{BqSchur}, we can verify that
		$$
		H^{(d)}_{q}(n,k)=\ps_n\left(s_{(d^k)}\right).
		$$
		The following results generalize Theorems~\ref{q-lc-k} and~\ref{q-lc-n}.
		\begin{theorem}
			Given an integer $n$, the polynomial sequence $\left(H^{(d)}_{q}(n,k)\right)_{0\le k\le n}$ is $q$-log-concave for any $d\ge1$.
		\end{theorem}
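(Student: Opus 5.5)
The plan is to mimic the proof of Theorem~\ref{q-lc-k} with $3$ replaced by $d$. The first step is the identity
\[
H^{(d)}_q(n,k)=\ps_n\bigl(s_{(d^k)}\bigr),
\]
which is stated just before the theorem and follows from the hook-content formula (Lemma~\ref{lem:hook-content}) exactly as in Lemma~\ref{BqSchur}. For $(d^k)$ we have $\sum_i (i-1)\lambda_i=d\binom{k}{2}$ and $h(i,j)=k-i+d-j+1$. The product over each row then telescopes into the ratio of $q$-binomials appearing in the definition of $H^{(d)}_q(n,k)$. The convention $H^{(d)}_q(n,k)=0$ for $k>n$ is consistent with $\ps_n(s_{(d^k)})=0$ when $k>n$.

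Next, fix $n$ and take $1\le k\le n-1$ (the endpoints $k=0$ and $k=n$ are handled by the same computation, since the out-of-range terms vanish). Because $\ps_n$ is a ring homomorphism,
\[
H^{(d)}_q(n,k)^2-H^{(d)}_q(n,k-1)H^{(d)}_q(n,k+1)=\ps_n\Bigl(s_{(d^k)}^2-s_{(d^{k+1})}s_{(d^{k-1})}\Bigr).
\]
Apply Kirillov's identity \eqref{kir} with $a=d\ge1$ and $b=k\ge1$. This rewrites the right-hand side as
\[
\ps_n\bigl(s_{((d+1)^k)}\,s_{((d-1)^k)}\bigr)=\ps_n\bigl(s_{((d+1)^k)}\bigr)\,\ps_n\bigl(s_{((d-1)^k)}\bigr).
\]
For $d=1$, the factor $s_{(0^k)}$ is $s_\varnothing=1$.

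Finally, each factor $\ps_n(s_\lambda)=s_\lambda(1,q,\dots,q^{n-1})$ is a sum of monomials $q^{\cdot}$ over semistandard tableaux, so it is $\ge_q 0$. The same conclusion holds for the Schur positive product, by the Littlewood--Richardson rule. Hence the difference has nonnegative coefficients, which is the claimed $q$-log-concavity.

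The only step needing care is the applicability of \eqref{kir}: it requires $a,b\ge1$, and these hold since $d\ge1$ and $k\ge1$. I expect no real obstacle; the main content is verifying the hook-content computation for general $d$.
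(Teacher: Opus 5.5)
Your proposal is correct and is essentially the paper's proof. The paper's chain is the same: identify $H^{(d)}_q(n,k)=\ps_n\bigl(s_{(d^k)}\bigr)$ via the hook-content formula, push the difference through the ring homomorphism $\ps_n$, apply Kirillov's identity \eqref{kir} with $a=d$, $b=k$ to obtain $\ps_n\bigl(s_{((d+1)^k)}\bigr)\ps_n\bigl(s_{((d-1)^k)}\bigr)$, and conclude from the coefficient-positivity of principal specializations of Schur functions. One small quibble: the case $k=0$ is not covered by \eqref{kir}, which needs $b\ge1$, but it is trivial and is not required by the definition of $q$-log-concavity anyway.
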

       \begin{proof}
           The proof is similar to that of Theorem~\ref{q-lc-k}, with $a=d$ in~\eqref{kir}.
       \end{proof}
        
		\begin{theorem}
			Given an integer $k$, the polynomial sequence $\left(H^{(d)}_{q}(n,k)\right)_{n\ge k}$ is $q$-log-concave for any $d\ge 1$.
		\end{theorem}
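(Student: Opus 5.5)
We follow the proof of Theorem~\ref{q-lc-n}, replacing the three-column rectangle by a $d$-column one. The first step is to write $H^{(d)}_q(k+m,k)$ as a principal specialization of a rectangle with $d$ rows. We start from $H^{(d)}_q(n,k)=\ps_n\bigl(s_{(d^k)}\bigr)$ and the hook-content formula of Lemma~\ref{lem:hook-content} for $\lambda=(d^k)$, where $h(i,j)=k-i+d-j+1$. Setting $n=k+m$ and reversing the row index gives
\[
H^{(d)}_q(k+m,k)=q^{\frac{dk(k-1)}{2}}\prod_{j=1}^{d}\frac{[k+m+j-1]_q!\,[j-1]_q!}{[k+j-1]_q!\,[m+j-1]_q!}.
\]
Applying Lemma~\ref{lem:hook-content} to $(m^d)$ with $k+d$ variables, and substituting $j=d+1-i$, gives the same product times $q^{m\binom d2}$. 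Hence
\[
H^{(d)}_q(k+m,k)=q^{\frac{dk(k-1)}{2}-m\binom d2}\,\ps_{k+d}\bigl(s_{(m^d)}\bigr).
\]
This is the conjugation symmetry $(d^k)\leftrightarrow(m^d)$ in the $q$-setting. We will double-check it against $d=3$, where $m\binom32=3m$ matches the case treated above.

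The second step handles $m=0$ directly. There $H^{(d)}_q(k-1,k)=0$ by convention, so the difference is $H^{(d)}_q(k,k)^2=q^{dk(k-1)}\ge_q0$.

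For $m\ge1$ the $q$-powers combine as in the $d=3$ case. Using $(m-1)\binom d2+(m+1)\binom d2=2m\binom d2$, we get
\[
H^{(d)}_q(k+m,k)^2-H^{(d)}_q(k+m-1,k)\,H^{(d)}_q(k+m+1,k)
=q^{dk(k-1)-2m\binom d2}\,\ps_{k+d}\Bigl(s_{(m^d)}^2-s_{((m-1)^d)}s_{((m+1)^d)}\Bigr).
\]
Kirillov's identity \eqref{kir} with $a=m$ and $b=d$ reads $s_{(m^d)}^2=s_{(m^{d+1})}s_{(m^{d-1})}+s_{((m+1)^d)}s_{((m-1)^d)}$. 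So the bracket equals $s_{(m^{d+1})}s_{(m^{d-1})}$, and the right-hand side becomes
\[
q^{dk(k-1)-2m\binom d2}\,\ps_{k+d}\bigl(s_{(m^{d+1})}\bigr)\,\ps_{k+d}\bigl(s_{(m^{d-1})}\bigr).
\]

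The last step is the power-of-$q$ bookkeeping, which is the only point needing care. Lemma~\ref{lem:min-degree} is stated with $k+3$ variables, but its proof works verbatim for $k+d$ variables: it uses only the hook-content formula and the nonnegativity of the $q$-binomial-type product from \cite[p.~44]{Mac95}. Thus $q^{-m\binom{r}{2}}\ps_{k+d}(s_{(m^r)})\ge_q0$ for every $r$. Applying this with $r=d+1$ and $r=d-1$ extracts the factor $q^{m\binom{d+1}2+m\binom{d-1}2}$. The total exponent of $q$ is then
\[
dk(k-1)-2m\tbinom d2+m\tbinom{d+1}2+m\tbinom{d-1}2=dk(k-1)+m,
\]
since $\binom{d+1}2+\binom{d-1}2-2\binom d2=1$. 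This exponent is nonnegative, so the difference is a product of a monomial in $q$ and two polynomials with nonnegative coefficients, and is therefore $\ge_q0$.

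When $d=1$ the factor $s_{(m^0)}=1$, and when $r>k+d$ the relevant specialization vanishes; both cases are covered by the same bound. This establishes $q$-log-concavity along each column for every $d\ge1$.
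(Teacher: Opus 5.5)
Your proposal is correct and follows the paper's proof essentially step for step. Both arguments rewrite $H^{(d)}_q(k+m,k)=q^{dk(k-1)/2-m\binom d2}\ps_{k+d}\bigl(s_{(m^d)}\bigr)$, treat $m=0$ separately, apply Kirillov's identity with $a=m$, $b=d$, and extend Lemma~\ref{lem:min-degree} to $k+d$ variables to reach the total exponent $dk(k-1)+m$.
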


    	\begin{proof}
		Recall that $H_q^{(d)}(k+m,k)$ equals
		\begin{equation*}
		\ps_{k+m}
		\left(s_{(d^k)}\right)
		=q^{\frac{dk(k-1)}{2}}
		\prod_{j=0}^{d-1}
		\frac{\begin{bmatrix}
			k+m+j\\ k
		\end{bmatrix}_q}{
		\begin{bmatrix}
			k+j\\ k
		\end{bmatrix}_q }=q^{\frac{dk(k-1)}2}
		\prod_{j=0}^{d-1}
		\frac{[k+m+j]_q![j]_q!}
		{[k+j]_q![m+j]_q!}.		    
		\end{equation*}
		By Lemma~\ref{lem:hook-content}, the hook-content formula gives
		\begin{equation*}
		\ps_{k+d}\left(s_{(m^d)}\right)
		=
		q^{m\binom{d}{2}}
		\prod_{i=1}^{d}\prod_{j=1}^{m}
		\frac{[k+d+j-i]_q}{[m+d-i-j+1]_q}.		
		\end{equation*}
		Reversing the index $i$ and rearranging the products, we obtain
		\begin{equation*}
		\ps_{k+d}\left(s_{(m^d)}\right)
		=
		q^{m\binom{d}{2}}
		\prod_{j=0}^{d-1}
		\frac{[k+m+j]_q![j]_q!}
		{[k+j]_q![m+j]_q!}.		
		\end{equation*}
		Therefore,
		\begin{equation*}
		H_q^{(d)}(k+m,k)
		=
		q^{\frac{dk(k-1)}2-m\binom{d}{2}}
		\ps_{k+d}
		\left(s_{(m^d)}\right).		    
		\end{equation*}
		It is obvious that
		\begin{equation*}
		H_q^{(d)}(k,k)^2=q^{dk(k-1)}\ge_q0.		
		\end{equation*}
		For $m\geq 1$, we have
		\begin{align*}
			& H_q^{(d)}(k+m,k)^2
			-
			H_q^{(d)}(k+m-1,k)H_q^{(d)}(k+m+1,k)
			\\
			={}&
			q^{dk(k-1)-d(d-1)m}
			\ps_{k+d}
			\left(
			s_{(m^d)}^2
			-
			s_{((m-1)^d)}s_{((m+1)^d)}
			\right).		    
		\end{align*}
	     Taking $a=m, b=d$ in~\eqref{kir}, $H_q^{(d)}(k+m,k)^2-
			H_q^{(d)}(k+m-1,k)H_q^{(d)}(k+m+1,k)$ equals
        \begin{equation*}
			q^{dk(k-1)-d(d-1)m}
			\ps_{k+d}
			\left(
			s_{(m^{d-1})}s_{(m^{d+1})}
			\right).            
        \end{equation*}
		
		By the same argument as in the proof of Lemma~\ref{lem:min-degree}, we can obtain 
		\begin{equation*}
		q^{-m\binom{d-1}{2}}
		\ps_{k+d}
		\left(s_{(m^{d-1})}\right)
		\geq_q0 
        \quad \textrm{and} \quad
        q^{-m\binom{d+1}{2}}
		\ps_{k+d}
		\left(s_{(m^{d+1})}\right)
		\geq_q0.		    
		\end{equation*}
		Consequently, $H_q^{(d)}(k+m,k)^2-H_q^{(d)}(k+m-1,k)H_q^{(d)}(k+m+1,k)$ equals
		\begin{equation*}
        q^{dk(k-1)+m}
		\left(q^{-m\binom{d-1}{2}}
		\ps_{k+d}
		\left(s_{(m^{d-1})}\right) \right) \left(q^{-m\binom{d+1}{2}}
		\ps_{k+d}
		\left(s_{(m^{d+1})}\right)\right).		    
		\end{equation*}
		Then for $d\ge 1$, we have
		\begin{equation*}
		H_q^{(d)}(k+m,k)^2
		-
		H_q^{(d)}(k+m-1,k)H_q^{(d)}(k+m+1,k)
		\geq_q0.		
		\end{equation*}
		Thus, for any fixed $k$, the sequence
		$\left(H_q^{(d)}(n,k)\right)_{n\geq k}$
		is $q$-log-concave.
	\end{proof}
		
	
		

\appendix
\renewcommand{\thetheorem}{\Alph{section}.\arabic{theorem}}
\section{Computational details for the proof of~\eqref{c1-c2}}
\begin{itemize}
    	\item[(iii)]  Evaluation of $A_2-D_2$.
		Recall that 
		$$
		A_2=\sum_{k=0}^{r}\sum_{0\le a\le b\le c\le2}
		\ps_{1}^{1}
		\left(
		s_{(3^a,2^{b-a},1^{c-b-1})}
		\right)
		\ps_{n-1}^{1}
		\!\left(
		s_{(3^{k-c},2^{c-b},1^{b-a})}
		\, s_{(3^{r-k})}
		\right).
		$$
		The term $\ps_1^1\left(s_{(3^a,2^{b-a},1^{c-b-1})}\right)=0$ unless $c-b-1\ge 0$.
		Hence, the double sum $A_2$ can be decomposed into four sums $A_2=A_{21}+A_{22}+A_{23}+A_{24}$,
		where 
		$$
		\begin{array}{c|c|c}
			(a,b,c)
			&
			(3^{k-c},2^{c-b},1^{b-a})
			&
			A_{2,i}\\ \hline
			(0,0,1)&(3^{k-1},2)&A_{21}=\sum_{k=0}^{r}
			\ps_{n-1}^1
			(s_{(3^{k-1},2)}\,
			s_{(3^{r-k})})\\
			(0,0,2)&(3^{k-2},2^2)&A_{22}=\sum_{k=0}^{r}
			\ps_{n-1}^1
			(s_{(3^{k-2},2^2)}\,
			s_{(3^{r-k})})\\
			(0,1,2)&(3^{k-2},2,1)&A_{23}=\sum_{k=0}^{r}
			\ps_{n-1}^1
			(s_{(3^{k-2},2,1)}\,
			s_{(3^{r-k})})\\
			(1,1,2)&(3^{k-2},2)&A_{24}=\sum_{k=0}^{r}
			\ps_{n-1}^1
			(s_{(3^{k-2},2)}\,
			s_{(3^{r-k})})
		\end{array}.
		$$
		Recall that 
		$$D_2=\sum_{k=0}^{r}
		\sum_{0\le a\le b\le c\le1}
		\ps_{1}^{1}\!\left(s_{(3^{a},2^{b-a},1^{c-b})}\right)
		\ps_{n-1}^{1} \Big(
		s_{(3^{k-c},2^{c-b},1^{b-a})}
		\,
		s_{(3^{r-k-1},2)}
		\Big)
		$$
		and $\ell(3^a,2^{b-a},1^{c-b})=c$.
		The term $\ps_1^1\left(s_{(3^a,2^{b-a},1^{c-b})}\right)=0$  unless $c\le 1$.
		Hence, the double sum $D_2$ can be decomposed into four sums $D_2=D_{21}+D_{22}+D_{23}+D_{24},$
		where 
		$$
		\begin{array}{c|c|c}
			(a,b,c)
			&
			(3^{k-c},2^{c-b},1^{b-a})
			&
			D_{2,i}\\ \hline
			(0,0,0)&(3^{k})&D_{21}=\sum_{k=0}^{r}
			\ps_{n-1}^1
			(s_{(3^{k})}\,
			s_{(3^{r-k-1},2)})\\
			(0,0,1)&(3^{k-1},2)&D_{22}=\sum_{k=0}^{r}
			\ps_{n-1}^1
			(s_{(3^{k-1},2)}\,
			s_{(3^{r-k-1},2)})\\
			(0,1,1)&(3^{k-1},1)&D_{23}=\sum_{k=0}^{r}
			\ps_{n-1}^1
			(s_{(3^{k-1},1)}\,
			s_{(3^{r-k-1},2)})\\
			(1,1,1)&(3^{k-1})&D_{24}=\sum_{k=0}^{r}
			\ps_{n-1}^1
			(s_{(3^{k-1})}\,
			s_{(3^{r-k-1},2)})
		\end{array}.
		$$
		A direct computation yields 
		\begin{equation*}
			A_{21}=\sum_{k=0}^{r}
			\ps_{n-1}^1
			\left(s_{(3^{k-1},2)}\,
			s_{(3^{r-k})}\right)
			=\sum_{k=0}^{r}
			\ps_{n-1}^1
			\left(s_{(3^{r-k-1},2)}\,
			s_{(3^{k})}\right)=D_{21},
		\end{equation*}
        and
		\begin{align*}
			D_{24}&=\sum_{k=0}^{r}
			\ps_{n-1}^1
			\Big(s_{(3^{k-1})}\,
			s_{(3^{r-k-1},2)}\Big)=\sum_{k=0}^{r}
			\ps_{n-1}^1
			\Big(s_{(3^{r-k-1})}\,
			s_{(3^{k-1},2)}\Big)\\
			&=\sum_{k=1}^{r-1}
			\ps_{n-1}^1
			\Big(s_{(3^{r-k-1})}\,
			s_{(3^{k-1},2)}\Big)=\sum_{k=0}^{r-2}
			\ps_{n-1}^1
			\Big(s_{(3^{r-k-2})}\,
			s_{(3^{k},2)}\Big)\\
			&=\sum_{k=2}^{r}
			\ps_{n-1}^1
			\Big(s_{(3^{r-k})}\,
			s_{(3^{k-2},2)}\Big)=\sum_{k=0}^{r}
			\ps_{n-1}^1
			\Big(s_{(3^{r-k})}\,
			s_{(3^{k-2},2)}\Big)=A_{24}.
		\end{align*}
		Hence, $A_2-D_2=A_{22}+A_{23}-D_{22}-D_{23}$ equals
		\begin{align*}
			&\sum_{k=0}^{r}\ps_{n-1}^1\Big(
			s_{(3^{k-2},2^2)}\, s_{(3^{r-k})}
			+s_{(3^{k-2},2,1)}\, s_{(3^{r-k})}\\
			&\qquad -s_{(3^{k-1},2)}\, s_{(3^{r-k-1},2)}
			-s_{(3^{k-1},1)}\, s_{(3^{r-k-1},2)}
			\Big).
		\end{align*}
		\item[(iv)]  Evaluation of $A_3+A_5-D_3.$
		Recall that 
		$$
		A_3=\sum_{k=0}^{r}\sum_{0\le a\le b\le c\le2}
		\ps_{1}^{1}
		\left(
		s_{(3^a,2^{b-a-1},1^{c-b+1})}
		\right)
		\ps_{n-1}^{1}
		\!\left(
		s_{(3^{k-c},2^{c-b},1^{b-a})}
		\, s_{(3^{r-k})}
		\right),
		$$
		and $\ell(3^a,2^{b-a-1},1^{c-b+1})=c$.
		The term $\ps_1^1\left(s_{(3^a,2^{b-a-1},1^{c-b+1})}\right)=0$ unless $c\le 1$ and $b-a-1\ge 0$,
		which implies that $a=0,b=c=1$. Thus, 
		$$A_3=
		\sum_{k=0}^{r}
		\ps_{n-1}^1
		\Big(
		s_{(3^{k-1},1)}\, s_{(3^{r-k})}\Big).
		$$
		Recall that 
		$$
		A_5=\sum_{k=0}^{r}\sum_{0\le a\le b\le c\le2}
		\ps_{1}^{1}
		\left(
		s_{(3^a,2^{b-a-1},1^{c-b})}
		\right)
		\ps_{n-1}^{1}
		\!\left(
		s_{(3^{k-c},2^{c-b},1^{b-a})}
		\, s_{(3^{r-k})}
		\right),
		$$
		and $\ell(3^a,2^{b-a-1},1^{c-b})=c-1$.
		The term $\ps_1^1\left(s_{(3^a,2^{b-a-1},1^{c-b})}\right)=0$ unless $b-a-1\ge 0$.
		Hence,  $A_5$ can be decomposed into four sums $A_5=A_{51}+A_{52}+A_{53}+A_{54}$,
		where 
		$$
		\begin{array}{c|c|c}
			(a,b,c)
			&
			(3^{k-c},2^{c-b},1^{b-a})
			&
			A_{5,i}\\ \hline
			(0,1,1)&(3^{k-1},1)&A_{51}=\sum_{k=0}^{r}
			\ps_{n-1}^1
			(s_{(3^{k-1},1)}\,
			s_{(3^{r-k})})\\
			(0,1,2)&(3^{k-2},2,1)&A_{52}=\sum_{k=0}^{r}
			\ps_{n-1}^1
			(s_{(3^{k-2},2,1)}\,
			s_{(3^{r-k})})\\
			(0,2,2)&(3^{k-2},1^2)&A_{53}=\sum_{k=0}^{r}
			\ps_{n-1}^1
			(s_{(3^{k-2},1^2)}\,
			s_{(3^{r-k})})\\
			(1,2,2)&(3^{k-2},1)&A_{54}=\sum_{k=0}^{r}
			\ps_{n-1}^1
			(s_{(3^{k-2},1)}\,
			s_{(3^{r-k})})
		\end{array}.
		$$
		Recall that 
		$$D_3=\sum_{k=0}^{r}
		\sum_{0\le a\le b\le c\le1}
		\ps_{1}^{1}\!\left(s_{(3^{a},2^{b-a},1^{c-b})}\right)
		\ps_{n-1}^{1} \Big(
		s_{(3^{k-c},2^{c-b},1^{b-a})}
		\,
		s_{(3^{r-k-1},1)}
		\Big),
		$$
		and $\ell(3^a,2^{b-a},1^{c-b})=c$.
		The term $\ps_1^1\left(s_{(3^a,2^{b-a},1^{c-b})}\right)=0$  unless $c\le 1$.
		Hence,  $D_3$ can be decomposed into four sums $D_3=D_{31}+D_{32}+D_{33}+D_{34},$
		where 
		$$
		\begin{array}{c|c|c}
			(a,b,c)
			&
			(3^{k-c},2^{c-b},1^{b-a})
			&
			D_{3,i}\\ \hline
			(0,0,0)&(3^{k})&D_{31}=\sum_{k=0}^{r}
			\ps_{n-1}^1
			(s_{(3^{k})}\,
			s_{(3^{r-k-1},1)})\\
			(0,0,1)&(3^{k-1},2)&D_{32}=\sum_{k=0}^{r}
			\ps_{n-1}^1
			(s_{(3^{k-1},2)}\,
			s_{(3^{r-k-1},1)})\\
			(0,1,1)&(3^{k-1},1)&D_{33}=\sum_{k=0}^{r}
			\ps_{n-1}^1
			(s_{(3^{k-1},1)}\,
			s_{(3^{r-k-1},1)})\\
			(1,1,1)&(3^{k-1})&D_{34}=\sum_{k=0}^{r}
			\ps_{n-1}^1
			(s_{(3^{k-1})}\,
			s_{(3^{r-k-1},1)})
		\end{array}.
		$$
		A direct computation yields 
		$$A_3=
		\sum_{k=0}^{r}
		\ps_{n-1}^1
		\Big(
		s_{(3^{k-1},1)}\, s_{(3^{r-k})}\Big)=
		\sum_{k=0}^{r}
		\ps_{n-1}^1
		\Big(
		s_{(3^{r-k-1},1)}\, s_{(3^{k})}\Big)=D_{31},
		$$
        and
		\begin{align*}
			A_{54}&=\sum_{k=0}^{r}
			\ps_{n-1}^1\Big(
			s_{(3^{k-2},1)}\,
			s_{(3^{r-k})}\Big)=\sum_{m=1}^{r+1}
			\ps_{n-1}^1\Big(
			s_{(3^{r-m-1},1)}\,
			s_{(3^{m-1})}\Big)\\
			&=\sum_{m=0}^{r}
			\ps_{n-1}^1\Big(
			s_{(3^{r-m-1},1)}\,
			s_{(3^{m-1})}\Big)=D_{34}.
		\end{align*}
		Hence, $A_3+A_5-D_3=A_{51}+A_{52}+A_{53}-D_{32}-D_{33}$ equals
		\begin{align*}
			&\sum_{k=0}^{r}\ps_{n-1}^1\Big(
			s_{(3^{k-1},1)}\, s_{(3^{r-k})}
			+s_{(3^{k-2},2,1)}\, s_{(3^{r-k})}
			+s_{(3^{k-2},1^2)}\, s_{(3^{r-k})}\\
			&-s_{(3^{k-1},2)}\, s_{(3^{r-k-1},1)}
			-s_{(3^{k-1},1)}\, s_{(3^{r-k-1},1)}
			\Big).
		\end{align*}
		\item[(v)] Evaluation of $A_4+A_6+A_7$.
		Recall that 
		$$
		A_4=\sum_{k=0}^{r}\sum_{0\le a\le b\le c\le2}
		\ps_{1}^{1}
		\left(
		s_{(3^{a-1},2^{b-a+1},1^{c-b})}
		\right)
		\ps_{n-1}^{1}
		\!\left(
		s_{(3^{k-c},2^{c-b},1^{b-a})}
		\, s_{(3^{r-k})}
		\right),
		$$
		and $\ell(3^{a-1},2^{b-a+1},1^{c-b})=c$.
		The term $\ps_1^1\left(s_{(3^{a-1},2^{b-a+1},1^{c-b})}\right)=0$ unless $c\le 1$ and $a-1\ge 0$,
		which implies that $a=b=c=1$. Thus, 
		$$
		A_4=
		\sum_{k=0}^{r}
		\ps_{n-1}^1
		\Big(
		s_{(3^{k-1})} \cdot s_{(3^{r-k})}\Big).
		$$
		Recall that 
		$$
		A_6=\sum_{k=0}^{r}\sum_{0\le a\le b\le c\le2}
		\ps_{1}^{1}
		\left(
		s_{(3^{a-1},2^{b-a+1},1^{c-b-1})}
		\right)
		\ps_{n-1}^{1}
		\!\left(
		s_{(3^{k-c},2^{c-b},1^{b-a})}
		\cdot s_{(3^{r-k})}
		\right),
		$$
		and $\ell(3^{a-1},2^{b-a+1},1^{c-b-1})=c-1$.
		The term $\ps_1^1\left(s_{(3^{a-1},2^{b-a+1},1^{c-b-1})}\right)=0$ unless  $a-1\ge 0$ and $c-b-1\ge 0$,
		which implies that $a=1,b=1,c=2$. Thus, 
		$$A_6=
		\sum_{k=0}^{r}
		\ps_{n-1}^1
		\Big(
		s_{(3^{k-2},2)} \, s_{(3^{r-k})}\Big).
		$$
		Recall that 
		$$
		A_7=\sum_{k=0}^{r}\sum_{0\le a\le b\le c\le2}
		\ps_{1}^{1}
		\left(
		s_{(3^{a-1},2^{b-a},1^{c-b+1})}
		\right)
		\ps_{n-1}^{1}
		\!\left(
		s_{(3^{k-c},2^{c-b},1^{b-a})}
		\, s_{(3^{r-k})}
		\right),
		$$
		and $\ell(3^{a-1},2^{b-a},1^{c-b+1})=c$.
		The term $\ps_1^1\left(s_{(3^{a-1},2^{b-a},1^{c-b+1})}\right)=0$ unless $c\le 1$ and $a-1\ge 0$,
		which implies that $a=b=c=1$. Thus, 
		$$
		A_7=
		\sum_{k=0}^{r}
		\ps_{n-1}^1
		\Big(
		s_{(3^{k-1})}\,s_{(3^{r-k})}\Big).
		$$
		Hence,  
		$$
		A_4+A_6+A_7
		=
		\sum_{k=0}^{r}
		\ps_{n-1}^1
		\Big(
		2s_{(3^{k-1})}\, s_{(3^{r-k})}
		+s_{(3^{k-2},2)} \, s_{(3^{r-k})}
		\Big).
		$$
\end{itemize}

\section{Auxiliary symmetric-function identities}
\begin{lemma}\label{proof_j2}
We have
\begin{align}
e_2 \Ya-e_1 \Yb&=\Zc-\Yc \label{eq-proof-j2},\\
e_2 \Yb-e_1\Yc&=\Ze+\frac{\Za}{t}-\Zf.\label{eq-proof-j21}
\end{align}
\end{lemma}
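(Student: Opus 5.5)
Both identities are statements about coefficients of $t^k$, so I would compare coefficients and apply Pieri's rule (Theorem~\ref{h-e}) with $e_2$ and $e_1=h_1$ to two-row shapes. The boundary case $k=0$ is handled by the convention that $s_\lambda=0$ when $\lambda$ is not a valid partition.

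For \eqref{eq-proof-j2}, the coefficient of $t^k$ on the left is $e_2s_{(k,k)}-e_1s_{(k+1,k)}$. Adding a vertical strip of size $2$ to $(k,k)$ gives exactly
\[
e_2s_{(k,k)}=s_{(k+1,k+1)}+s_{(k+1,k,1)}+s_{(k,k,1,1)} .
\]
Adding one cell to $(k+1,k)$ gives
\[
e_1s_{(k+1,k)}=s_{(k+2,k)}+s_{(k+1,k+1)}+s_{(k+1,k,1)} .
\]
Subtracting leaves $s_{(k,k,1,1)}-s_{(k+2,k)}$, which is the coefficient of $t^k$ on the right. At $k=0$ the shapes $(1,0,1)$ and $(0,0,1,1)$ are invalid and vanish. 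The check then reduces to $e_2-h_1^2=-h_2$, which is consistent.

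For \eqref{eq-proof-j21}, I would likewise list the vertical $2$-strips on $(k+1,k)$:
\[
e_2s_{(k+1,k)}=s_{(k+2,k+1)}+s_{(k+2,k,1)}+s_{(k+1,k+1,1)}+s_{(k+1,k,1,1)} .
\]
The single cells added to $(k+2,k)$ give
\[
e_1s_{(k+2,k)}=s_{(k+3,k)}+s_{(k+2,k+1)}+s_{(k+2,k,1)} .
\]
The difference is $s_{(k+1,k,1,1)}+s_{(k+1,k+1,1)}-s_{(k+3,k)}$. Summing against $t^k$, the first and last terms give $\Ze$ and $-\Zf$. For the middle term,
\[
\sum_{k\ge0}s_{(k+1,k+1,1)}t^k=\frac{\Za-s_{(0,0,1)}}{t}=\frac{\Za}{t},
\]
because $(0,0,1)$ is not a partition.

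The only real obstacle is bookkeeping at small $k$. One must make sure that each Pieri term listed is a genuine partition, or else vanishes by convention, so that the cancellations hold uniformly for all $k\ge0$. In particular $(k+1,k,1)$ and $(k+1,k,1,1)$ vanish at $k=0$ on both sides simultaneously, so I would check $k=0$ explicitly for each identity.
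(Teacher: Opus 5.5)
Your proposal is correct and is essentially the paper's proof: you compare coefficients of $t^k$ using the vertical Pieri expansions of $e_2 s_{(k,k)}$, $e_1 s_{(k+1,k)}$, $e_2 s_{(k+1,k)}$ and $e_1 s_{(k+2,k)}$, then reindex $\sum_{k\ge0}s_{(k+1,k+1,1)}t^k=\Za/t$. Your explicit $k=0$ boundary check is a small addition that the paper leaves implicit.
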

\begin{proof}
		By vertical Pieri's rule (Theorem~\ref{h-e}), we have
		$$
		e_2s_{(k,k)}
		=s_{(k+1,k+1)}
		+s_{(k+1,k,1)}
		+s_{(k,k,1,1)},
		$$
		$$
		e_1s_{(k+1,k)}
		=s_{(k+2,k)}
		+s_{(k+1,k+1)}
		+s_{(k+1,k,1)}.
		$$
		Subtracting these two identities, multiplying by $t^k$ and summing over
        $k\ge 0$ yields
		$$
		e_2\Ya-e_1\Yb=\Zc-\Yc.
		$$
		Similarly,
		$$
		e_2s_{(k+1,k)}
		=s_{(k+2,k+1)}
		+s_{(k+2,k,1)}
		+s_{(k+1,k+1,1)}
		+s_{(k+1,k,1,1)},
		$$
		$$
		e_1s_{(k+2,k)}
		=s_{(k+3,k)}
		+s_{(k+2,k+1)}
		+s_{(k+2,k,1)}.
		$$
		Therefore,
		$$
		e_2s_{(k+1,k)}-e_1s_{(k+2,k)}
		=
		s_{(k+1,k+1,1)}
		+s_{(k+1,k,1,1)}
		-s_{(k+3,k)}.
		$$
		Multiplying by $t^k$ and summing over $k\geq0$, and using 
		${\Za}/{t}
		=
		\sum_{k\geq0}s_{(k+1,k+1,1)}\,t^k,
		$
		we have
		$$
		e_2\Yb-e_1\Yc=\frac{\Za}{t}+\Ze-\Zf.
		$$
This completes the proof.
\end{proof}

		We present two further identities that will be used in the proof of~\eqref{eq:base-pf3}.
		\begin{lemma}\label{lem-eq19}
		We have
			\begin{equation}\label{eq:J2-c}
			\Zf=e_3\Ya-e_2\Yb+e_1\Yc-S_{(k,k,1,1,1)}.
			\end{equation}
            \begin{align}
			\Zg&=\frac{\Ya-1}{t^2}+\frac{h_2\Ya-e_1\Yb}{t}+e_2\Yc\label{eq:J2-d}\\
			&\qquad -(e_1h_2-h_3)\cdot\Yb+(h_2^2-e_1h_3)\cdot\Ya. \nonumber
			\end{align}
		\end{lemma}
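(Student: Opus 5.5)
We prove both identities of Lemma~\ref{lem-eq19} coefficientwise in $t$. For each fixed $k\ge 0$ we compare the coefficient of $t^k$ on the two sides, using only Pieri's rule (Theorem~\ref{h-e}) and the Jacobi--Trudi identity~\eqref{eq:Jacobi-Trudi}.

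\emph{Identity~\eqref{eq:J2-c}.} The coefficient of $t^k$ to be verified is
\[
s_{(k+3,k)}=e_3s_{(k,k)}-e_2s_{(k+1,k)}+e_1s_{(k+2,k)}-s_{(k,k,1,1,1)}.
\]
This is proved exactly as Lemma~\ref{proof_j2}. The vertical Pieri rule gives
\[
e_3s_{(k,k)}=s_{(k+1,k+1,1)}+s_{(k+1,k,1,1)}+s_{(k,k,1,1,1)},
\]
\[
e_2s_{(k+1,k)}=s_{(k+2,k+1)}+s_{(k+2,k,1)}+s_{(k+1,k+1,1)}+s_{(k+1,k,1,1)},
\]
\[
e_1s_{(k+2,k)}=s_{(k+3,k)}+s_{(k+2,k+1)}+s_{(k+2,k,1)}.
\]
In the alternating sum every term cancels except $s_{(k,k,1,1,1)}+s_{(k+3,k)}$, which proves the claim. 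For $k=0$ the same lists hold once invalid shapes are discarded. Then $e_3-e_2h_1+e_1h_2=h_3$ gives $s_{(3)}$, and $S_{(k,k,1,1,1)}$ contributes nothing at $t^0$, so this case is consistent.

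\emph{Identity~\eqref{eq:J2-d}.} First rewrite every series on the right as $\sum_{k\ge0}(\cdot)\,t^k$. For example, $(\Ya-1)/t^2=\sum_{k\ge0}s_{(k+2,k+2)}t^k$ and $(h_2\Ya-e_1\Yb)/t$ has $t^k$-coefficient $h_2s_{(k+1,k+1)}-e_1s_{(k+2,k+1)}$, up to the constant boundary terms, which cancel. The claim then reduces to
\begin{align*}
s_{(k,k,2,2)}={}&s_{(k+2,k+2)}+h_2s_{(k+1,k+1)}-e_1s_{(k+2,k+1)}+e_2s_{(k+2,k)}\\
&-(e_1h_2-h_3)s_{(k+1,k)}+(h_2^2-e_1h_3)s_{(k,k)}
\end{align*}
for all $k\ge0$. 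The key step is the generalized Jacobi--Trudi determinant $\det(h_{\lambda_i-i+j})_{1\le i,j\le4}$ with $\lambda=(k,k,2,2)$. Its bottom two rows are $(h_0,h_1,h_2,h_3)$ and $(0,h_0,h_1,h_2)$. Expand it by the Laplace expansion along the first two rows.

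For columns $p<q$, the top $2\times2$ minor is $h_{k-1+p}h_{k-2+q}-h_{k-1+q}h_{k-2+p}=s_{(k+q-2,\,k+p-1)}$ by~\eqref{eq:two-row-JT}. The complementary bottom minors, with Laplace signs, are as follows:
\begin{itemize}
\item $\{1,2\}$: $h_2^2-h_1h_3$;
\item $\{1,3\}$: $-(h_1h_2-h_3)$;
\item $\{1,4\}$: $h_1^2-h_2=e_2$;
\item $\{2,3\}$: $h_2$;
\item $\{2,4\}$: $-h_1$;
\item $\{3,4\}$: $1$.
\end{itemize}
Using $h_1=e_1$, these reproduce exactly the six terms above.

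The main obstacle is the boundary range $k=0,1$. There $(k,k,2,2)$ is not a partition, so by convention $s_{(k,k,2,2)}=0$, and we must check that the determinant also vanishes. For $k=1$, the quantities $\lambda_i-i$ equal $(0,-1,-1,-2)$, so rows $2$ and $3$ coincide. For $k=0$ they are $(-1,-2,-1,-2)$, so rows $1$ and $3$ coincide. In both cases the determinant is $0$, as required. Finally, multiply by $t^k$ and sum over $k\ge0$, handling the constant terms $1$ and $e_1$, $e_2$ in $\Ya$ and $\Yb$ when reindexing. This gives~\eqref{eq:J2-d}.
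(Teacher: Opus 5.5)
Your proof is correct. For \eqref{eq:J2-c} it is essentially the paper's argument: three vertical Pieri expansions and term-by-term cancellation, with $k=0$ also checked via $h_3=e_1h_2-e_2h_1+e_3$.

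For \eqref{eq:J2-d} you take a genuinely different route.
\begin{itemize}
\item \emph{The paper's route.} It extracts the same coefficient $[t^k]R$. It then expands every piece into Schur functions by Pieri's rule, including $s_{(2,1)}s_{(k+1,k)}$ (nine shapes, one with multiplicity two) and $s_{(2,2)}s_{(k,k)}$ (six shapes). Finally it cancels everything except $s_{(k,k,2,2)}$.
\item \emph{Your route.} You recognize $[t^k]R$ as the Laplace expansion of the $4\times4$ Jacobi--Trudi matrix of $(k,k,2,2)$ along its first two rows. The top minors are two-row Schur functions. With Laplace signs, the complementary minors of the fixed bottom block are $s_{(2,2)},-s_{(2,1)},e_2,h_2,-e_1,1$. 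I checked all six minors and the signs $(-1)^{3+p+q}$.
\end{itemize}

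What your route buys:
\begin{itemize}
\item It explains where the coefficients in \eqref{eq:J2-d} come from.
\item It removes all Littlewood--Richardson multiplicity bookkeeping.
\item It is uniform in $k$: the degenerate cases $k=0,1$ are settled by coinciding rows.
\end{itemize}
The uniformity is a real gain. Under the paper's convention that invalid shapes give zero, the paper's generic Pieri lists are not literally correct at $k=0$. For example, $s_{(2,1)}s_{(1)}=s_{(3,1)}+s_{(2,2)}+s_{(2,1,1)}$, whereas the generic list yields $2s_{(2,1,1)}$ and an extra $s_{(1,1,1,1)}$. The paper's $k=0$ case survives only because the same spurious terms $s_{(2,1,1)}+s_{(1,1,1,1)}$ appear in the generic list for $s_{(2,2)}s_{(0,0)}$ and cancel. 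In return, the paper's route stays within the Pieri-rule computations used throughout Appendix~B.

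One small slip in your write-up. In fact $(\Ya-1)/t^2=e_2t^{-1}+\sum_{k\ge0}s_{(k+2,k+2)}t^k$, not $\sum_{k\ge0}s_{(k+2,k+2)}t^k$. The $t^{-1}$ term cancels against $(h_2-e_1^2)t^{-1}=-e_2t^{-1}$ coming from $(h_2\Ya-e_1\Yb)/t$. Your remark about ``boundary terms, which cancel'' should say this explicitly.
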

		\begin{proof}
			By vertical Pieri's rule (Theorem~\ref{h-e}),
			$$
			e_3s_{(k,k)}
			=s_{(k+1,k+1,1)}
			+s_{(k+1,k,1,1)}
			+s_{(k,k,1,1,1)},\quad 
			e_1s_{(k+2,k)}
			=s_{(k+3,k)}
			+s_{(k+2,k+1)}
			+s_{(k+2,k,1)},
			$$
			$$
			e_2s_{(k+1,k)}
			=s_{(k+2,k+1)}
			+s_{(k+2,k,1)}
			+s_{(k+1,k+1,1)}
			+s_{(k+1,k,1,1)}.
			$$
			Thus,
			$$
			e_3s_{(k,k)}-e_2s_{(k+1,k)}+e_1s_{(k+2,k)}
			=s_{(k+3,k)}+s_{(k,k,1,1,1)}
			$$
		    and equation~\eqref{eq:J2-c} follows.

			To prove~\eqref{eq:J2-d}, let $R$ denote the right-hand side of~\eqref{eq:J2-d}.
			For $k\geq 0$,
			\begin{align*}
				[t^k]R
				={}&s_{(k+2,k+2)}
				+h_2s_{(k+1,k+1)}
				-e_1s_{(k+2,k+1)}
				+e_2s_{(k+2,k)}\\
				&-(e_1h_2-h_3)s_{(k+1,k)}
				+(h_2^2-e_1h_3)s_{(k,k)}.
			\end{align*}
			By Pieri's rule,
			$$
			h_2s_{(k+1,k+1)}-e_1s_{(k+2,k+1)}
			=s_{(k+1,k+1,2)}-s_{(k+2,k+2)},
			$$
			and
			$$
			e_2s_{(k+2,k)}
			=s_{(k+3,k+1)}
			+s_{(k+3,k,1)}
			+s_{(k+2,k+1,1)}
			+s_{(k+2,k,1,1)}.
			$$
			Moreover, since
			$$
			e_1h_2-h_3=s_{(2,1)},
			\quad
			h_2^2-e_1h_3=s_{(2,2)},
			$$
			repeated applications of Pieri's rule give
			\begin{align*}
				(e_1h_2-h_3)s_{(k+1,k)}
				={}&s_{(k+3,k+1)}
				+s_{(k+3,k,1)}
				+s_{(k+2,k+2)}
				+2s_{(k+2,k+1,1)}
				\\
				&+s_{(k+2,k,2)}
				+s_{(k+2,k,1,1)}
				+s_{(k+1,k+1,2)}
				+s_{(k+1,k+1,1,1)}
				+s_{(k+1,k,2,1)},
			\end{align*}
			whereas
			\begin{align*}
				(h_2^2-e_1h_3)s_{(k,k)}
				={}&s_{(k+2,k+2)}
				+s_{(k+2,k+1,1)}
				+s_{(k+2,k,2)}
				\\
				&+s_{(k+1,k+1,1,1)}
				+s_{(k+1,k,2,1)}
				+s_{(k,k,2,2)}.
			\end{align*}
			Substitution and cancellation yield
			$
			[t^k]R=s_{(k,k,2,2)}.
			$
		This completes the proof.
		\end{proof}
		
	

\begin{proof}[Proof of~\eqref{eq:base-pf3}]
Recall that
		\begin{align*}
        &M_{12}=\frac{\Ya-1-e_2t-(e_2^2-e_1e_3)t^2}{t^3},\\
			&M_{13}=\frac{\Yb-e_1-(e_1e_2-e_3)t}{t^2},\quad
			M_{14}=\frac{\Yc-h_2}{t},\quad
			M_{15}=\Zf,\\
			&M_{23}=\frac{\Ya-1-e_2t}{t^2},\quad
			M_{24}=\frac{\Yb-e_1}{t},\quad
			M_{25}=\Yc,\\
			&M_{34}=\frac{\Ya-1}{t},\quad
			M_{35}=\Yb,\quad
			M_{45}=\Ya.
		\end{align*}
It remains to prove
\begin{align*}
&h_2\Pf(M_{\{2,3,4,5\}, \{2,3,4,5\}})-e_1\Pf(M_{\{1,3,4,5\},\{1,3,4,5\}})+\Pf(M_{\{1,2,4,5\},\{1,2,4,5\}})\\
={}&\frac{\Ya\Zg-\Zc^2+S_{(k,k,1)}S_{(k,k,1,1,1)}}{t}.
\end{align*}
Denote $L=h_2\Pf(M_{\{2,3,4,5\}, \{2,3,4,5\}})-e_1\Pf(M_{\{1,3,4,5\},\{1,3,4,5\}})+\Pf(M_{\{1,2,4,5\},\{1,2,4,5\}}).$
We have
		\begin{align*}
			&\Pf\left(
			M_{\{2,3,4,5\},\{2,3,4,5\}}
			\right)
			=M_{23}M_{45}-M_{24}M_{35}+M_{25}M_{34}\\
			={}&\frac{\Ya-1-e_2t}{t^2}\Ya
			-\frac{\Yb-e_1}{t}\Yb
			+\Yc\frac{\Ya-1}{t}\\
			={}&\frac{\Ya^2-\Ya}{t^2}+
			\frac{-e_2\Ya
				-\Yb^2+e_1\Yb+\Ya\Yc- \Yc}{t},
		\end{align*}
		\begin{align*}
			&\Pf\left(
			M_{\{1,3,4,5\},\{1,3,4,5\}}
			\right)
			=M_{13}M_{45}-M_{14}M_{35}+M_{15}M_{34}\\
			={}&\frac{\Yb-e_1-(e_1e_2-e_3)t}{t^2}\Ya
			-\frac{\Yc-h_2}{t}\Yb
			+\Zf\frac{\Ya-1}{t}\\
			={}&\frac{\Ya\Yb-e_1\Ya}{t^2}\\
			&+\frac{-(e_1e_2-e_3)\Ya-\Yb\Yc+h_2\Yb+\Zf\Ya-\Zf}{t},
		\end{align*}
		and
		\begin{align*}
			&\Pf\left(
			M_{\{1,2,4,5\},\{1,2,4,5\}}
			\right)
			=M_{12}M_{45}-M_{14}M_{25}+M_{15}M_{24}\\
			={}&\frac{\Ya-1-e_2t-(e_2^2-e_1e_3)t^2}{t^3}\Ya
			-\frac{\Yc-h_2}{t}\Yc
			+\Zf\frac{\Yb-e_1}{t}\\
			={}&\frac{\Ya^2-\Ya}{t^3}+\frac{-e_2\Ya}{t^2}\\
			&+\frac{-(e_2^2-e_1e_3)\Ya-\Yc^2+h_2\Yc+\Yb\Zf-e_1\Zf}{t}.
		\end{align*}
Collecting terms gives
		\begin{align*}
        tL&=t\left(h_2\Pf(M_{\{2,3,4,5\}, \{2,3,4,5\}})-
		e_1\Pf(M_{\{1,3,4,5\},\{1,3,4,5\}})+\Pf(M_{\{1,2,4,5\},\{1,2,4,5\}})\right)\\
			&=\frac{\Ya^2-\Ya}{t^2}
			+\frac{h_2\Ya^2-e_1\Ya\Yb
				+\Ya(e_1^2-e_2-h_2)}{t}\\
			&\quad+e_2\Ya(e_1^2-e_2-h_2)+h_2\Ya\Yc\\
			&\quad+e_1\Yb\Yc-h_2\Yb^2
			-\Yc^2+\left(\Yb-e_1\Ya\right)\Zf.
		\end{align*}
		Since
		$
		h_2=e_1^2-e_2,
		$
		\begin{align*}
		tL=&\frac{\Ya^2-\Ya}{t^2}
			+\frac{h_2\Ya^2-e_1\Ya\Yb}{t}+h_2\Ya\Yc\\
			&+e_1\Yb\Yc-h_2\Yb^2
			-\Yc^2+\left(\Yb-e_1\Ya\right)\Zf.
		\end{align*}
		By~\eqref{eq:J2-d} in Lemma~\ref{lem-eq19}, $\Zg$ equals
        $$
        \frac{\Ya-1}{t^2}+\frac{h_2\Ya-e_1\Yb}{t}+e_2\Yc
	 -(e_1h_2-h_3)\cdot\Yb+(h_2^2-e_1h_3)\cdot\Ya.
        $$
		It follows that $tL-\Ya\Zg$ equals
		\begin{align*}
			&-(h_2^2-e_1h_3)\Ya^2-h_2\Yb^2-\Yc^2
			+(e_1h_2-h_3)\Ya\Yb\\
            &\quad +(h_2-e_2)\Ya\Yc
	    +e_1\Yb\Yc+(\Yb-e_1\Ya)\Zf.
		\end{align*}
		Furthermore, equation~\eqref{eq-proof-j2} shows that $e_2 \Ya-e_1 \Yb=\Zc-\Yc$. Hence,
		\begin{align*}
			\Zc^2
			&=e_2^2\Ya^2+e_1^2\Yb^2+\Yc^2\\
			&\quad -2e_1e_2\Ya\Yb
			+2e_2\Ya\Yc-2e_1\Yb\Yc.
		\end{align*}
Thus, $tL-\Ya\Zg+\Zc^2$ equals
		\begin{align*}
			&\bigl(-h_2^2+e_1h_3+e_2^2\bigr)\Ya^2
	  +(-h_2+e_1^2)\Yb^2+\bigl(e_1h_2-h_3-2e_1e_2\bigr)\Ya\Yb\\
			&\quad+(h_2-e_2+2e_2)\Ya\Yc
+(e_1-2e_1)\Yb\Yc+(\Yb-e_1\Ya)\Zf.
		\end{align*}
		Using the identities $h_2=e_1^2-e_2,$ $e_1h_2-h_3=e_1e_2-e_3,$ and $h_2^2-e_1h_3=e_2^2-e_1e_3,$  we have
		\begin{align*}
			&tL-\Ya\Zg+\Zc^2\\
			={}&e_1e_3\Ya^2+e_2\Yb^2
			-(e_1e_2+e_3)\Ya\Yb\\
			&\quad +e_1^2\Ya\Yc-e_1\Yb\Yc+\left(\Yb-e_1\Ya\right)\cdot \Zf\\
			=&\left(\Yb-e_1\Ya\right)\cdot
			\left(-e_3\Ya+e_2\Yb-e_1\Yc\right)+\left(\Yb-e_1\Ya\right)\cdot \Zf\\
			=&\left(\Yb-e_1\Ya\right)\cdot
			\left(\Zf-e_3\Ya+e_2\Yb-e_1\Yc\right).
		\end{align*}
		Hence, we obtain 
		\begin{align*}
			&tL
			-\Ya\Zg+\Zc^2+\left(S_{(k+1,k)}-e_1S_{(k,k)}\right)\cdot S_{(k,k,1,1,1)}\\
			=&\left(\Yb-e_1\Ya\right) \cdot
			\left(
			\Zf-e_3\Ya+e_2\Yb-e_1\Yc+ S_{(k,k,1,1,1)}
			\right).
		\end{align*}
		By~\eqref{eq:J2-c}, the right-hand side vanishes.
        Since $e_1S_{(k,k)}=S_{(k+1,k)}+S_{(k,k,1)}$,
        we derive that
		\begin{align*}
			tL
			-\Ya\Zg+\Zc^2- S_{(k,k,1)}S_{(k,k,1,1,1)}=0.
		\end{align*}
This completes the proof.
\end{proof}

\section*{Declaration of AI usage}
Some algebraic simplifications in Appendix~B were carried out
with assistance of ChatGPT 5.6 Pro. The proof strategy and all
mathematical arguments are due to the authors. The authors
independently verified all AI-assisted computations and take full
responsibility for the content of the manuscript.
\section*{Acknowledgements}
This work was supported in part by the National Natural Science Foundation of China (No. 12201100).




\begin{thebibliography}{99}	
\addcontentsline{toc}{section}{References}

	\bibitem{Bax64}
	G. Baxter,
	\textit{On fixed points of the composite of commuting functions},
	Proc. Amer. Math. Soc. \textbf{15} (1964), 851--855.
	
	\bibitem{Bha99}
	G. Bhatnagar,
	\textit{A short proof of an identity of Sylvester},
	Int. J. Math. Math. Sci. \textbf{22} (1999), 431--435.
	
	\bibitem{Bra04}
	P. Br{\"a}nd{\'e}n,
	\textit{$q$-Narayana numbers and the flag $h$-vector of
		$J(\mathbf{2}\times\mathbf{n})$},
	Discrete Math. \textbf{281} (2004), 67--81.
	
	\bibitem{Bra15}
	P. Br{\"a}nd{\'e}n,
	\textit{Unimodality, log-concavity, real-rootedness and beyond},
	in \textit{Handbook of Enumerative Combinatorics},
	Discrete Math. Appl. (Boca Raton),
	CRC Press, Boca Raton, FL, 2015, pp.~437--483.
	
	\bibitem{BD02}
	D. Bump and P. Diaconis,
	\textit{Toeplitz minors},
	J. Combin. Theory Ser. A \textbf{97} (2002), 252--271.
	
	
	\bibitem{CGH+24}
	L. Chen, A. Gibney, L. Heller, E. Kalashnikov, H. Larson and W. Xu,
	\textit{On an equivalence of divisors on $\overline{M}_{0,n}$
		from Gromov--Witten theory and conformal blocks},
	Transform. Groups \textbf{29} (2024), 561--590.
	
	\bibitem{CWY10}
	W.Y.C. Chen, L.X.W. Wang and A.L.B. Yang,
	\textit{Schur positivity and the $q$-log-convexity of the Narayana
		polynomials},
	J. Algebraic Combin. \textbf{32} (2010), 303--338.
	
	\bibitem{CWY11}
	W.Y.C. Chen, L.X.W. Wang and A.L.B. Yang,
	\textit{Recurrence relations for strongly $q$-log-convex polynomials},
	Canad. Math. Bull. \textbf{54} (2011), 217--229.
	
	\bibitem{CGHK78}
	F.R.K. Chung, R.L. Graham, V.E. Hoggatt Jr. and M. Kleiman,
	\textit{The number of Baxter permutations},
	J. Combin. Theory Ser. A \textbf{24} (1978), 382--394.
	
	\bibitem{Cig21}
	J. Cigler,
	\textit{Pascal triangle, Hoggatt matrices, and analogous constructions},
	arXiv:2103.01652, 2021.
	
	\bibitem{Dil12}
	K. Dilks,
	\textit{Involutions on Baxter objects},
	Discrete Math. Theor. Comput. Sci. Proc. AR (2012), 721--734.
	
	\bibitem{Dil15}
	K. Dilks,
	\textit{Involutions on Baxter Objects, and $q$-Gamma Nonnegativity},
	Ph.D. thesis, University of Minnesota, 2015.
	
	\bibitem{DV08}
	T. Do\v{s}li\'c and D. Veljan,
	\textit{Logarithmic behavior of some combinatorial sequences},
	Discrete Math. \textbf{308} (2008), 2182--2212.
	
	\bibitem{FZZ26}
	H. Fang, C.X.T. Zhang and J.J.Y. Zhao,
	\textit{Analytic properties arising from the Baxter numbers},
	J. Math. Anal. Appl. \textbf{561} (2026), Paper No.~130615.
	
	
	\bibitem{FK01}
	M. Fulmek and M. Kleber,
	\textit{Bijective proofs for Schur function identities which imply
		Dodgson's condensation formula and Pl\"ucker relations},
	Electron. J. Combin. \textbf{8} (2001), Research Paper 16.
	
	\bibitem{FH85}
	J. F{\"u}rlinger and J. Hofbauer,
	\textit{$q$-Catalan numbers},
	J. Combin. Theory Ser. A \textbf{40} (1985), 248--264.
	
	
	\bibitem{HIMN17}
	T. Hudson, T. Ikeda, T. Matsumura and H. Naruse,
	\textit{Degeneracy loci classes in $K$-theory---determinantal and
		Pfaffian formula},
	Adv. Math. \textbf{320} (2017), 115--156.
	
	\bibitem{HKKO25}
	J. Huh, J.S. Kim, C. Krattenthaler and S. Okada,
	\textit{Bounded Littlewood identities for cylindric Schur functions},
	Trans. Amer. Math. Soc. \textbf{378} (2025), 6765--6829.
	
	
	\bibitem{IN13}
	T. Ikeda and H. Naruse,
	\textit{$K$-theoretic analogues of factorial Schur $P$- and
		$Q$-functions},
	Adv. Math. \textbf{243} (2013), 22--66.
	
	\bibitem{IW95}
	M. Ishikawa and M. Wakayama,
	\textit{Minor summation formula of Pfaffians},
	Linear Multilinear Algebra \textbf{39} (1995), 285--305.
	
	\bibitem{IW96}
	M. Ishikawa, S. Okada and M. Wakayama,
	\textit{Applications of minor-summation formula I: Littlewood's formulas},
	J. Algebra \textbf{183} (1996), 193--216.
	
	\bibitem{IW99}
	M. Ishikawa and M. Wakayama,
	\textit{Applications of minor-summation formula II: Pfaffians and
		Schur polynomials},
	J. Combin. Theory Ser. A \textbf{88} (1999), 136--157.
	
	\bibitem{IW00}
	M. Ishikawa and M. Wakayama,
	\textit{Minor summation formulas of Pfaffians: Survey and a new identity},
	Adv. Stud. Pure Math. \textbf{28} (2000), 133--142.
	
	\bibitem{IW06}
	M. Ishikawa and M. Wakayama,
	\textit{Applications of minor summation formula III: Pl\"ucker
		relations, lattice paths and Pfaffian identities},
	J. Combin. Theory Ser. A \textbf{113} (2006), 113--155.
	
	\bibitem{JL15}
	N. Jing and Y. Li,
	\textit{A lift of Schur's $Q$-functions to the peak algebra},
	J. Combin. Theory Ser. A \textbf{135} (2015), 268--290.
	
	
	\bibitem{Kir84}
	A.N. Kirillov,
	\textit{Completeness of states of the generalized Heisenberg magnet},
	Zap. Nauchn. Sem. Leningrad. Otdel. Mat. Inst. Steklov. (LOMI)
	\textbf{134} (1984), 169--189.
	
	
	\bibitem{Knu96}
	D.E. Knuth,
	\textit{Overlapping Pfaffians},
	Electron. J. Combin. \textbf{3} (1996), Research Paper 5.
	
	
	\bibitem{LR12}
	S. Law and N. Reading,
	\textit{The Hopf algebra of diagonal rectangulations},
	J. Combin. Theory Ser. A \textbf{119} (2012), 788--824.
	
	\bibitem{LL23}
	Z. Lin and J. Liu,
	\textit{Proof of Dilks' bijectivity conjecture on Baxter permutations},
	J. Combin. Theory Ser. A \textbf{200} (2023), Paper No.~105796.
	
	\bibitem{LW07}
	L.L. Liu and Y. Wang,
	\textit{On the log-convexity of combinatorial sequences},
	Adv. in Appl. Math. \textbf{39} (2007), 453--476.
	
	\bibitem{Mac95}
	I.G. Macdonald,
	\textit{Symmetric Functions and Hall Polynomials},
	2nd ed., Oxford University Press, 1995.
	
	\bibitem{Mal79}
	C.L. Mallows,
	\textit{Baxter permutations rise again},
	J. Combin. Theory Ser. A \textbf{27} (1979), 394--396.
	
	\bibitem{MS26}
	J. Mao and W. Shi,
	\textit{The analytic properties of Hoggatt triangles},
	arXiv:2607.01582, 2026.
	
	
	
	\bibitem{Oka19}
	S. Okada,
	\textit{Pfaffian formulas and Schur $Q$-function identities},
	Adv. Math. \textbf{353} (2019), 446--470.
	
	\bibitem{Sag92_1}
	B.E. Sagan,
	\textit{Inductive proofs of $q$-log-concavity},
	Discrete Math. \textbf{99} (1992), 289--306.
	
	\bibitem{Sag92_2}
	B.E. Sagan,
	\textit{Log-concave sequences of symmetric functions and analogs of
		the Jacobi--Trudi determinants},
	Trans. Amer. Math. Soc. \textbf{329} (1992), 795--811.
	
	\bibitem{Sta71}
	R.P. Stanley,
	\textit{Theory and application of plane partitions. Part 2},
	Stud. Appl. Math. \textbf{50} (1971), 259--279.
	
	\bibitem{Sta89}
	R.P. Stanley,
	\textit{Log-concave and unimodal sequences in algebra, combinatorics,
		and geometry},
	Ann. New York Acad. Sci. \textbf{576} (1989), 500--535.
	
	\bibitem{Sta24}
	R.P. Stanley,
	\textit{Enumerative Combinatorics},
	Vol.~2, Cambridge University Press, Cambridge, 2024.
	
	\bibitem{Ste90}
	J.R. Stembridge,
	\textit{Nonintersecting paths, Pfaffians, and plane partitions},
	Adv. Math. \textbf{83} (1990), 96--131.
	
	\bibitem{Sun96}
	T.S. Sundquist,
	\textit{Two-variable Pfaffian identities and symmetric functions},
	J. Algebraic Combin. \textbf{5} (1996), 135--148.
	
	\bibitem{WZ16}
	Y. Wang and B.-X. Zhu,
	\textit{Log-convex and Stieltjes moment sequences},
	Adv. in Appl. Math. \textbf{81} (2016), 115--127.
	
	
	\bibitem{Zhu14}
	B.-X. Zhu,
	\textit{Some positivities in certain triangular arrays},
	Proc. Amer. Math. Soc. \textbf{142} (2014), 2943--2952.
	
	\bibitem{Zhu21}
	B.-X. Zhu,
	\textit{On a Stirling--Whitney--Riordan triangle},
	J. Algebraic Combin. \textbf{54} (2021), 999--1019.
	
	\bibitem{ZS15}
	B.-X. Zhu and H. Sun,
	\textit{Linear transformations preserving the strong
		$q$-log-convexity of polynomials},
	Electron. J. Combin. \textbf{22} (2015), Paper No.~3.26.
	
\end{thebibliography}
\end{document}